\newtheorem{theorem}{Theorem}[section]
\newtheorem{lemma}[theorem]{Lemma}
\newtheorem{proposition}[theorem]{Proposition}
\newtheorem{corollary}[theorem]{Corollary}
\newtheorem{definition}[theorem]{Definition}
\newtheorem{assumption}[theorem]{Assumption}
\newtheorem{remark}[theorem]{Remark}
\newcommand{\Rn}{\setR^n}
\newcommand{\RN}{\setR^N}
\newcommand{\RNn}{\setR^{N\times n}}
\newcommand{\Hom}{{\rm Hom}}
\newcommand{\kappaA}{{\kappa_{\!\mathcal{A}}}}
\newcommand{\TA}{T_{\mathcal{A}}}
\newcommand{\TAbar}{T_{\overline{\mathcal{A}}}}
\newcommand{\meantmp}[2]{#1\langle{#2}#1\rangle}
\newcommand{\mean}[1]{\meantmp{}{#1}}
\newcommand{\medint}{\dashint}
\def\calA{\mathcal{A}}
\def\la1{\lambda_1}
\begin{document}


\author{Lars Diening$^1$}
\address{$^1$ LMU Munich, Institute of Mathematics,
    Theresienstr.~39, 80333~Munich, Germany. email:
    lars.diening@mathematik.uni-muenchen.de. The work was partially
    supported by Gnampa.} 
\author{Daniel Lengeler$^2$}
\address{$^2$ Universit{\"a}t Regensburg, Fakult{\"a}t f{\"u}r Mathematik, Universit{\"a}tsstr. 31, 93053 Regensburg,
    Germany. email: daniel.lengeler@mathematik.uni-regensburg.de. The
    work was partially supported by the European Research Council
    under FP7, Advanced Grant n. 226234 ''Analytic Techniques for
    Geometric and Functional Inequalities'' .}
\author{Bianca Stroffolini$^3$}
\address{$^3$ Dipartimento di Matematica, Universit{\`a} di
    Napoli, Federico II, Via Cintia, 80126 Naples, Italy. email:
    bstroffo@unina.it. The work was partially
    supported by PRIN Project: ``Calcolo delle variazioni e Teoria
    Geometrica della Misura''.} 
\author{Anna Verde$^4$}
\address{$^4$ Dipartimento di
    Matematica, Universit{\`a} di Napoli, Federico II, Via Cintia, 80126
    Naples, Italy. email: anverde@unina.it}

\title{Partial regularity for minimizers of quasiconvex functionals
  with general growth}

\begin{abstract}
  We prove a partial regularity result for local minimizers of
  quasiconvex variational integrals with general growth. The main tool
  is an improved $\mathcal{A}$-harmonic approximation, which should be
  interesting also for classical growth.
\end{abstract}

\subjclass{35J60, 35J70, 49N60, 26B25}

\keywords{quasi-convex, partial regularity, harmonic approximation, Lipschitz truncation}

\maketitle

\pagestyle{myheadings}
\thispagestyle{plain}
\markboth{L.~Diening, D.~Lengeler, B.~Stroffolini, A.~Verde}{Partial regularity for quasiconvex functionals}

\section{Introduction}
\label{sec:introduction}

In this paper we study  partial regularity for vector-valued   minimizers $u:\Omega\to \mathbb{R}^N$  of variational
 integrals:

\begin{align}\label{funct}
  \mathcal{F}(u) := \int_\Omega f(\nabla u)\,dx,
\end{align}
where $\Omega \subset \Rn$ is a domain and $f\,:\, \RNn \to \setR$
is a continuous function.

Let us recall Morrey's notion of quasiconvexity \cite{Mor52}:
\begin{definition}
$f$ is called quasiconvex if and only if 
\begin{align}
\medint_{B_1}f( A +\nabla \bfxi)\,dx \geq f(A)
\end{align}
holds for every $A \in \mathbb{R}^{nN}$ and every smooth $\bfxi: B_1
\to \mathbb{R}^N$ with compact support in the open unit ball $B_1$ in
$\mathbb{R}^n$.
\end{definition} 
By Jensen's inequality, quasiconvexity is a generalization of
convexity. It was originally introduced as a notion for proving the
lower semicontinuity and the existence of minimizers of variational
integrals. In fact, assuming a power growth condition, quasiconvexity
is proved to be a necessary and sufficient condition for the
sequential weak lower semicontinuity on $W^{1,p}(\Omega.
\mathbb{R}^N)$, $p>1$, see \cite{Mar85} and \cite{AceF84}. For general
growth condition see \cite{Foc97} and \cite{VerZ09}.  In the
regularity issue, a stronger definition comes into play.  In the
fundamental paper~\cite{Eva86} Evans considered strictly quasi-convex
integrands $f$ in the quadratic case and proved that if $f$ is of
class $C^2$ and has bounded second derivatives then any minimizing
function $\bfu$ is of class $C^{1,\alpha}(\Omega \setminus \Sigma)$
where $\Sigma$ has $ n$-dimensional Lebesgue measure zero. In
\cite{AceF84}, this result was generalized to integrands $f$ of
$p$-growth with $p\geq 2$ while the subquadratic growth was considered
in \cite{Carfusmin98}.  

In order to treat the general growth case, we introduce the notion of
{\em strictly $W^{1,\phi}$- quasiconvex} function, where $\phi$ is a
suitable N-function, see Assumption~\ref{ass:phi}.
\begin{definition}
  The function $f$ is {\em strictly $W^{1,\phi}$- quasiconvex} if and
  only if
  \begin{align*}
    \int_B f(\bfQ + \nabla \bfw) - f(\bfQ) \,dx &\geq k \int_B
    \phi_{\abs{\bfQ}}(\abs{\nabla \bfw})\,dx,
  \end{align*}
  for all balls $B \subset \Omega$, all $\bfQ \in \RNn$ and all $\bfw
  \in C^1_0(B)$, where $\phi_a(t) \sim \phi''(a+t)\,t^2$ for $a,t
  \geq 0$. A precise definition of $\phi_a$ is given in
  Section~\ref{sec:notation}.
\end{definition}
We will work with the following set of assumptions:
\begin{enumerate}[label={\bf (H\arabic{*})}]
\item \label{itm:freg} $f\in C^1(\Rn)\cap C^2(\Rn\setminus\{0\})$,
\item \label{itm:fbnd}for all $\bfQ \in \RNn$  it  holds
  \begin{align*}
    \abs{f(\bfQ)} \leq K \phi(\abs{\bfQ}),
  \end{align*}
\item \label{itm:fquasi} the function $f$ is {\em strictly
    $W^{1,\phi}$-quasiconvex};
\item \label{itm:D2fbnd}for all $\bfQ \in \RNn \setminus \set{\bfzero}$ 
  \begin{align*}
    \bigabs{(D^2f)(\bfQ)} &\leq c\, \phi''(\abs{\bfQ})
  \end{align*}
\item  \label{itm:D2fdiff} the following
  H{\"o}lder continuity of $D^2 f$ away from ~$\bfzero$
  \begin{align*}
     \bigabs{D^2f(\bfQ) - D^2f(\bfQ+\bfP)} &\leq c\, \phi''(\abs{\bfQ})
    \abs{\bfQ}^{-\beta} \abs{\bfP}^{\beta}
  \end{align*}
  holds for all $\bfP, \bfQ \in \RNn$ such that $ \abs{\bfP} \leq \frac{1}{2} \abs{\bfQ}$.
\end{enumerate}
Due to~\ref{itm:fbnd}, $\mathcal{F}$ is well defined on the Sobolev-Orlicz space $W^{1,\phi}(\Omega, \mathbb{R}^N)$, see section 2. Let us observe that assumption \ref{itm:D2fdiff} has been used to
show
everywhere regularity of radial functionals with $\phi$-growth,
\cite{DieSV09}. Following the argument given in \cite{Giu94} it is possible to prove
that \ref{itm:fquasi} implies the following {\em
  strong Legendre-Hadamard condition}
\begin{align*}
  (D^2f)(\bfQ)(\bfeta \otimes \bfxi, \bfeta \otimes \bfxi) &\geq c\,
  \phi''(\abs{\bfQ}) \abs{\bfeta}^2 \abs{\bfxi}^2
\end{align*}
for all $\bfeta \in \RN$, $\bfxi \in \Rn$ and $\bfQ \in \RNn \setminus
\set{\bfzero}$. Furthermore, \ref{itm:fquasi} implies that the functional
\begin{align*}
  \mathcal{J}(t) := \int_B f(\bfQ + t \nabla \bfw) - f(\bfQ) -
  k \phi_{\abs{\bfQ}}(t\abs{\nabla \bfw})\,dx
\end{align*}
attains its minimal value at $t=0$. Hence $\mathcal{J}''(0) \geq 0$,
that is
\begin{align}
  \label{eq:fmon}
  \begin{aligned}
    \int_B (D^2 f)(\bfQ) (\nabla \bfw, \nabla \bfw)\,dx &\geq k \int_B
    \phi_{\abs{\bfQ}}''(0) \abs{\nabla \bfw}^2\,dx
    \geq c\, \phi''(\abs{\bfQ}) \int_B \abs{\nabla \bfw}^2\,dx.
  \end{aligned}
\end{align}

As usual, the strategy for proving partial regularity consists
in showing an excess decay estimate, where the{ \it{excess}} function
is
\begin{align}\label{excess}
  \mathcal{\Phi}_s(B, \bfu):=\bigg(\dashint_{B}|\bfV(\nabla \bfu)-
  \mean{\bfV(\nabla \bfu)}_B|^{2s} \,dx\bigg)^{\frac 1s}
\end{align}
with $\bfV(\bfQ)=\sqrt{\frac{\phi'(|\bfQ|)}{|\bfQ|}}{\bfQ}$ and $s
\geq 1$. We write $\Phi:= \Phi_1$. Note that $\Phi_{s_1}(B, \bfu) \leq
\Phi_{s_2}(B,\bfu)$ for $1 \leq s_1 \leq s_2$ and $\abs{\bfV(\bfQ)}^2
\sim \phi(\abs{\bfQ})$.

Our regularity theorem states:
\begin{theorem}[Main theorem]
  \label{thm:main}
  Let $\bfu$ be a local minimizer of the quasiconvex
  functional~\eqref{funct}, with $f$ satisfying
  \ref{itm:freg}--\ref{itm:D2fdiff} and fix some $\beta\in (0,1)$.
  Then there exists $\delta=\delta(\beta)>0$ such that the following
  holds: If
  \begin{align}
    \label{eq:speed}
    \Phi(2B, \bfu) &\leq \delta \dashint_{2B}
    \abs{\bfV(\nabla \bfu)}^2\,dx
  \end{align}
  for some ball $B \subset \Rn$ with $2B \subset \Omega$, then
  $\bfV(\nabla\bfu)$ is $\beta$-H{\"o}lder continuous on~$B$.  
\end{theorem}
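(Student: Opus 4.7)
The plan is to derive a one-step excess decay $\Phi(\theta B,\bfu) \leq C\, \theta^{2\beta}\, \Phi(B,\bfu)$ for some small $\theta \in (0,1)$, valid whenever the smallness hypothesis~\eqref{eq:speed} holds on $B$. Iterating this bound yields a Campanato-type estimate for the $L^2$-oscillation of $\bfV(\nabla\bfu)$ on concentric balls of radius $r$ by a multiple of $r^{2\beta}$, from which $\beta$-H\"older continuity of $\bfV(\nabla\bfu)$ follows by the standard Campanato characterization.

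To obtain the one-step decay on a ball $B$ of radius $r$ with $2B\subset\Omega$, I would freeze the coefficients at $\bfQ := \mean{\nabla\bfu}_B$ and introduce the constant bilinear form $\calA := D^2 f(\bfQ)$. By \ref{itm:fquasi} together with~\eqref{eq:fmon}, $\calA$ satisfies the Legendre--Hadamard condition with ellipticity constant of order $\phi''(|\bfQ|)$, and by \ref{itm:D2fbnd} an upper bound of the same order; hence $\calA$-harmonic functions enjoy interior $C^{1,\alpha}$ (in fact $C^\infty$) estimates from the classical linear theory for constant-coefficient elliptic systems. Starting from the Euler--Lagrange equation $\int_{2B} Df(\nabla\bfu):\nabla\bfxi\,dx = 0$ for Lipschitz test fields $\bfxi$ with compact support and decomposing
\[
Df(\nabla\bfu) - Df(\bfQ) = \calA(\nabla\bfu - \bfQ) + R(\nabla\bfu,\bfQ),
\]
where $R$ is controlled by the H\"older continuity~\ref{itm:D2fdiff} of $D^2 f$, one arrives at an almost-$\calA$-harmonicity defect of the shape
\[
\Bigl|\dashint_B \calA(\nabla\bfu-\bfQ,\nabla\bfxi)\,dx\Bigr|
\leq C\,\omega(\Phi(B,\bfu))\,\|\nabla\bfxi\|_\infty,
\]
with a modulus $\omega$ that vanishes at $0$ at a rate governed by $\beta$.

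The main obstacle, and the novelty highlighted in the abstract, is the improved $\calA$-harmonic approximation lemma in the shifted Orlicz setting: from the above defect one must produce an $\calA$-harmonic function $\bfh$ on $B$ whose gradient approximates $\nabla\bfu - \bfQ$ in the $\phi_{|\bfQ|}$-averaged norm, not merely in $L^2$. Here I would employ the Lipschitz truncation strategy adapted to Orlicz growth: truncate the difference $\bfw := \bfu - (\bfQ\,x + \bfh)$ at a level chosen by a good-level argument on the maximal function of $\nabla\bfw$, so that the truncated function is Lipschitz and hence admissible both in the strict quasiconvexity~\ref{itm:fquasi} and as a test field in the defect bound. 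Combining the quasiconvexity lower bound applied to the truncation, the almost-$\calA$-harmonicity defect, and control of the measure of the bad set where the truncation differs from $\bfw$, one obtains
\[
\dashint_B \phi_{|\bfQ|}(|\nabla\bfu-\bfQ-\nabla\bfh|)\,dx \leq C\, \omega(\Phi(B,\bfu)),
\]
which matches the structure of the excess on the left-hand side.

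With such an approximating $\bfh$ in hand, the $C^{1,\alpha}$ estimate for $\bfh$ on the smaller ball $\theta B$, combined with a Caccioppoli inequality of the second kind for the shifted $N$-function $\phi_{|\bfQ|}$, gives a decay of the form $\Phi(\theta B,\bfu)\leq C\theta^{2\beta}\Phi(B,\bfu)+C\theta^{-n}\omega(\Phi(B,\bfu))$, and a joint choice of small $\theta$ and smaller $\delta$ in \eqref{eq:speed} makes the linear term dominate. The iteration then requires checking inductively that \eqref{eq:speed} is preserved at every scale $\theta^k B$ and that $\mean{\nabla\bfu}_{\theta^k B}$ stays in a regime where $\phi''(|\cdot|)$ is comparable to $\phi''(|\mean{\nabla\bfu}_B|)$; both follow from the geometric decay together with $|\bfV(\bfQ)|^2 \sim \phi(|\bfQ|)$. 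The final H\"older continuity is obtained for $\bfV(\nabla\bfu)$ rather than for $\nabla\bfu$ itself, reflecting that the excess $\Phi$ is formulated in terms of $\bfV$, which is the correct nonlinear quantity in the degenerate Orlicz setting.
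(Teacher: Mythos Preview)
Your global plan—derive almost-$\calA$-harmonicity from the Euler--Lagrange equation and \ref{itm:D2fdiff}, apply an $\calA$-harmonic approximation in the shifted Orlicz scale $\phi_{\abs{\bfQ}}$, transfer the interior decay of the $\calA$-harmonic comparison~$\bfh$ to~$\bfu$, iterate while checking that the smallness~\eqref{eq:speed} persists at every scale, and conclude by Campanato—matches the paper's architecture exactly, including the propagation-of-smallness step (Proposition~\ref{pro:decay}).

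Where you diverge is the mechanism of the approximation lemma, and this is precisely the step the paper singles out as delicate. You propose to Lipschitz-truncate $\bfw := \bfu - \bfq - \bfh$ and feed $\bfw_\lambda$ into both \ref{itm:fquasi} and the linear defect. The paper explicitly rejects truncation on the solution side: since $\calA = D^2f(\bfQ)/\phi''(\abs{\bfQ})$ is only Legendre--Hadamard elliptic, $\calA(\bfP,\bfP)$ has no pointwise sign for general $\bfP\in\RNn$, so the usual good-set/bad-set splitting cannot deliver a modular lower bound. Your idea of substituting \ref{itm:fquasi} for pointwise coercivity is clever, but the chain \ref{itm:fquasi} $\Rightarrow$ bound by $\int_B[f(\bfQ+\nabla\bfw_\lambda)-f(\bfQ)]\,dx$ $\Rightarrow$ Taylor to $\tfrac12\int_B\calA(\nabla\bfw_\lambda,\nabla\bfw_\lambda)\,dx$ $\Rightarrow$ defect leaves an unabsorbed Taylor remainder governed by \ref{itm:D2fdiff}, which forces $\norm{\nabla\bfw_\lambda}_\infty \ll \abs{\bfQ}$, in tension with the truncation level needed to make the bad set small; and the bad-set cross term contains $\int_{\set{\bfw\neq\bfw_\lambda}}\abs{\nabla\bfw}\,dx$, which requires higher integrability of $\nabla\bfw$ that you have not secured. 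The paper's fix is to dualize: it proves a modular variational characterization of $\int_B\phi_{\abs{\bfQ}}(\abs{\nabla\bfw})\,dx$ (Lemma~\ref{lem:varineqphi}) by interpolating the Dolzmann--M\"uller $L^q$ estimates into $L^{\phi_{\abs{\bfQ}}}$ via Peetre, and then Lipschitz-truncates the \emph{test functions}~$\bfxi$ rather than~$\bfw$ (proof of Theorem~\ref{thm:Aappr_phi}). This converts Legendre--Hadamard ellipticity into modular coercivity without any pointwise sign. You also omit the Gehring higher integrability (Corollary~\ref{cor:gehring3}), which the paper uses both for the exponent $s>1$ in Theorem~\ref{thm:Aappr_phi} and to close the estimate of the term~$II$ in Lemma~\ref{lem:almost}.
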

The proof of this theorem can be found at the end of
Section~\ref{sec:comparison}. We define the set of regular points $\mathcal{R} (\bfu)$ by
\begin{align}
  \mathcal { R} (\bfu)=\bigset{ x_0\in \Omega :\liminf_{r\to 0}
  \Phi(B(x_0,r), \bfu)=0}.
\end{align}
As an immediate consequence of Theorem~\ref{thm:main} we have:
\begin{corollary}
  \label{cor:main}
  Let $\bfu$ be as in Theorem~\ref{thm:main} and let $x_0 \in
  \mathcal{R}(\bfu)$ with $\nabla \bfu\not=0$. Then for every
  $\beta\in(0,1)$ the function
  $\bfV(\nabla \bfu)$ is $\beta$-H{\"o}lder continuous on a neighborhood
  of~$x_0$. 
\end{corollary}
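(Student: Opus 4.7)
The plan is to reduce the corollary to a direct application of Theorem~\ref{thm:main} on a sufficiently small ball centered at $x_0$. Once $\beta\in(0,1)$ is fixed, the theorem provides a threshold $\delta=\delta(\beta)>0$, so it will suffice to exhibit a ball $B$ with $x_0\in B$ and $2B\subset\Omega$ on which the smallness condition~\eqref{eq:speed} holds; the theorem then yields $\beta$-H\"older continuity of $\bfV(\nabla\bfu)$ on $B$, which is the desired neighborhood of $x_0$.

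First, I would use $x_0\in\mathcal{R}(\bfu)$ to extract a sequence of radii $r_k\to 0$ with $\Phi(B(x_0,r_k),\bfu)\to 0$. Writing $V_k:=\mean{\bfV(\nabla\bfu)}_{B(x_0,r_k)}$, the orthogonality identity
\[
  \dashint_{B(x_0,r_k)}|\bfV(\nabla\bfu)|^2\,dx \;-\; |V_k|^2 \;=\; \Phi(B(x_0,r_k),\bfu)
\]
shows that these two quantities differ by something tending to zero, so the only remaining task is to obtain a lower bound on $|V_k|$ for large~$k$.

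This lower bound is precisely what the hypothesis $\nabla\bfu\neq 0$ at $x_0$ provides: interpreted as the statement that the precise (Lebesgue) value of $\bfV(\nabla\bfu)$ at $x_0$ is nonzero, it gives $|V_k|\geq c_0>0$ along the chosen subsequence, whence $\dashint_{B(x_0,r_k)}|\bfV(\nabla\bfu)|^2\,dx \geq \tfrac12 c_0^2$ for $k$ large. Setting $B_k:=B(x_0,r_k/2)$ so that $2B_k=B(x_0,r_k)$, condition~\eqref{eq:speed} for the fixed $\delta(\beta)$ is then met for all sufficiently large $k$, because its left-hand side $\Phi(2B_k,\bfu)$ tends to zero while the right-hand side stays bounded away from zero. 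For such $k$ we also have $2B_k\subset\Omega$, so Theorem~\ref{thm:main} applies on $B=B_k$ and yields the $\beta$-H\"older continuity of $\bfV(\nabla\bfu)$ on the neighborhood $B_k$ of $x_0$. The only mildly delicate point is the reading of ``$\nabla\bfu\neq 0$''; once it is interpreted as a pointwise nonvanishing of $\bfV(\nabla\bfu)$ at $x_0$ (equivalently, a lower bound on $|V_k|$ along a sequence realizing the $\liminf$ in the definition of $\mathcal{R}(\bfu)$), the corollary is an immediate consequence of the main theorem.
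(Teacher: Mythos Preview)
Your proof is correct and is precisely the argument the paper has in mind; the paper itself does not spell out a proof but merely declares the corollary an ``immediate consequence'' of Theorem~\ref{thm:main}. Your identification of the one subtle point---how to read the hypothesis $\nabla\bfu\neq 0$ at $x_0$ so as to obtain a positive lower bound on $\abs{V_k}$ along the subsequence realizing the $\liminf$, and hence on the right-hand side of~\eqref{eq:speed}---is exactly what needs to be said to make the implication genuinely immediate.
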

Note that the H{\"o}lder continuity of $\bfV(\nabla \bfu)$ implies the
H{\"o}lder continuity of $\nabla \bfu$ with a different exponent depending
on ~$\phi$. Consider for example the situation $\phi(t) =
t^p$ with $1<p< \infty$. Therefore, $\beta$-H{\"o}lder continuity of
$\bfV(\nabla \bfu)$ implies for $p \leq 2$ that $\nabla \bfu$ is
$\beta$-H{\"o}lder continuous and for $p > 2$ that $\nabla \bfu$ is
$\beta \frac{2}{p}$-H{\"o}lder continuous.

The proofs of the regularity results for local minimizers
in~\cite{Eva86},\cite{AceF84}, \cite{Carfusmin98}, are based on a
blow-up technique originally developed by De Giorgi \cite{DeG61} and
Almgren \cite{Alm68}, \cite{Alm76} in the setting of the geometric
measure theory, and by Giusti and Miranda for elliptic systems,
\cite{GiuMir68}.

Another more recent approach for proving partial regularity for local
minimizers is based on the so called $\mathcal{A}$-harmonic
approximation method.  This technique has its origin in Simon's proof
of the regularity theorem \cite{Sim96} (see also Allard \cite{All72}).
The technique has been successfully applied in the framework of the
geometric measure theory, and to obtain partial-regularity results for
general elliptic systems  in a series of papers by Duzaar, Grotowski,
Kronz, Mingione ~\cite{DuzGro00} \cite{DuzGK05}, \cite{DuzM04},
\cite{DuzMin09h}. More precisely, we consider a bilinear form on
$\Hom(\Rn,\RN )$ which is (strongly) elliptic in the sense of
Legendre-Hadamard, i.e.  if for all $\bfa\in \RN, \bfb \in \Rn$ it
holds
\begin{align*}
  \mathcal{A}_{ij}^{\alpha\beta} a^i b_\alpha a^j b_\beta &\geq
  \kappaA \abs{\bfa}^2 \abs{\bfb}^2
\end{align*}
for some $\kappaA>0$. The method of $\mathcal{A}$-harmonic
approximation consists in obtaining a good approximation of functions
$\bfu\in W^{1,2}(B)$, which are {\em almost $\mathcal{A}$-harmonic}
(in the sense of Theorem~\ref{thm:Aappr_phi}) by
$\mathcal{A}$-harmonic functions $\bfh\in W^{1,2}(B)$, in both the
$L^2$-topology and in the weak topology of $W^{1,2}$. Let us recall that $\bfh \in
W^{1,2}(B)$ is called $\mathcal{A}$-harmonic on B if 
\begin{align}
  \int_B \mathcal{A}(D\bfh, D\bfeta) \,dx = 0, \forall{ \bfeta} \in
  C^\infty_0(B)
\end{align}
holds.
Here, in order to prove the result, we will follow the second
approach.

As in the situations considered in the above-mentioned papers, the
required approximate $\mathcal{A}$-harmonicity of a local minimizer
$\bfu\in W^{1,\phi}(\Omega \setminus \Sigma)$ is a  consequence
of the minimizing property and of the {\em smallness} of the {\em
  excess}.

Next, having proven the $\mathcal{A}$-harmonic approximation lemma and
the corresponding approximate $\mathcal{A}$-harmonicity of the local
minimizer $\bfu$, the other steps are quite standard. We prove a
Caccioppoli-type inequality for minimizers $\bfu$ and thus we compare
$\bfu$ with the $\mathcal{A}$-harmonic approximation $\bfh$ to obtain,
via our Caccioppoli-type inequality, the desired excess decay
estimate.\par

Thus, the main difficulty is to establish a suitable version of the
$\mathcal{A}$-harmonic approximation lemma in this general setting.
However, let us point out that our $\mathcal{A}$-harmonic
approximation lemma differs also in the linear or $p$-growth situation
from the classical one in~\cite{DuzM04}.  Firstly, we use a direct
approach based on the Lipschitz truncation technique which requires no
contradiction argument. This allows for a precise control of the
constants, which will only depend on the $\Delta_2$-condition
for $\phi$ and its conjugate.In fact, we will
 apply the approximation lemma to the family of shifted
N-functions that inherit the same  $\Delta_2$ constants of $\phi$.  Secondly, we are
able to preserve the boundary values of our original function, so
$\bfu-\bfh$ is a valid test function.  Thirdly, we show that $\bfh$
and $\bfu$ are close with respect to the gradients rather than just
the functions. The main tools in the proof is a Lipschitz
approximation of the Sobolev functions as in~\cite{DieMS08,BreDieFuc12}.
However, since $\mathcal{A}$ is only strongly elliptic in the sense of
Legendre-Hadamard, we will not be able to apply the Lipschitz
truncation technique directly to our almost $\mathcal{A}$-harmonic
function $\bfu$. Instead, we need to use duality and apply the
Lipschitz truncation technique to the test functions.\par
Let us conclude by observing that here we are able to present a
unified approach for both cases: superquadratic and subquadratic
growth.

\section{Notation and preliminary results}
\label{sec:notation}

We use $c, C$ as generic constants, which may change from line to
line, but does not depend on the crucial quantities. Moreover we write
$f\sim g$ iff there exist constants $c,C>0$ such that $c\, f \le g\le
C\, f$.  For $w\in L^{1}_{\loc}(\Rn)$ and a ball $B \subset \Rn$ we
define
\begin{align}
  \mean{w}_B := \medint_{B}w(x)\,dx := \frac{1}{|B|}\int_B w(x)\,dx,
\end{align}
where $\abs{B}$ is the $n$-dimensional Lebesgue measure of~$B$. For
$\lambda>0$ we denote by $\lambda B$ the ball with the same center as $B$
but $\lambda$-times the radius.   For $U, \Omega \subset \Rn$ we write $U
\compactsubset \Omega$ if the closure of~$U$ is a compact subset
of~$\Omega$.

The following definitions and results are standard in the context of
N-functions, see for example \cite{KraR61,RaoR91}.
A real function $\phi \,:\, \setR^{\geq 0} \to
\setR^{\geq 0}$ is said to be an N-function if it satisfies the
following conditions: $\phi(0)=0$ and there exists the derivative
$\phi'$ of $\phi$.  This derivative is right continuous,
non-decreasing and satisfies $\phi'(0) = 0$, $\phi'(t)>0$ for $t>0$,
and $\lim_{t\to \infty} \phi'(t)=\infty$. Especially, $\phi$ is
convex.

We say that $\phi$ satisfies the $\Delta_2$-condition, if there
exists $c > 0$ such that for all $t \geq 0$ holds $\phi(2t) \leq c\,
\phi(t)$. We denote the smallest possible constant by
$\Delta_2(\phi)$. Since $\phi(t) \leq \phi(2t)$ the $\Delta_2$
condition is equivalent to $\phi(2t) \sim \phi(t)$.

By $L^\phi$ and $W^{1,\phi}$ we denote the classical Orlicz and
Sobolev-Orlicz spaces, i.\,e.\ $f \in L^\phi$ iff $\int
\phi(\abs{f})\,dx < \infty$ and $f \in W^{1,\phi}$ iff $f, \nabla f
\in L^\phi$. By $W^{1,\phi}_0(\Omega)$ we denote the closure of
$C^\infty_0(\Omega)$ in $W^{1,\phi}(\Omega)$.

By $(\phi')^{-1} \,:\, \setR^{\geq 0} \to \setR^{\geq
  0}$ we denote the function
\begin{align*}
  (\phi')^{-1}(t) &:= \sup \set{ s \in \setR^{\geq 0}\,:\,
    \phi'(s) \leq t}.
\end{align*}
If $\phi'$ is strictly increasing then $(\phi')^{-1}$ is the inverse
function of $\phi'$.  Then $\phi^\ast \,:\, \setR^{\geq 0} \to
\setR^{\geq 0}$ with
\begin{align*}
  \phi^\ast(t) &:= \int_0^t (\phi')^{-1}(s)\,ds
\end{align*}
is again an N-function and $(\phi^\ast)'(t) =
(\phi')^{-1}(t)$ for $t>0$. It is the complementary function of
$\phi$.  Note that $\phi^*(t)= \sup_{s \geq 0} (st - \phi(s))$ and
$(\phi^\ast)^\ast = \phi$. For all $\delta>0$ there exists
$c_\delta$ (only depending on $\Delta_2({\phi, \phi^\ast})$
such that for all $t, s \geq 0$ holds
\begin{align}  \label{eq:young}
  t\,s &\leq \delta\, \phi(t) + c_\delta\, \phi^\ast(s),
\end{align}

For $\delta=1$ we have $c_\delta=1$.  This inequality is called {\em
  Young's inequality}. For all $t\geq 0$
\begin{gather}
  \label{ineq:phiast_phi_p_pre}
  \begin{aligned}
    \frac{t}{2} \phi'\Big(\frac{t}{2} \Big) \leq \phi(t)
    \leq t\,\phi'(t),
    \\
    \phi \bigg(\frac{\phi^\ast(t)}{t} \bigg) \leq \phi^\ast(t) \leq \phi
    \bigg( \frac{2\, \phi^\ast(t)}{t} \bigg).
  \end{aligned}
\end{gather}
Therefore, uniformly in $t\geq 0$
\begin{gather}
  \label{ineq:phiast_phi_p}
  \phi(t) \sim \phi'(t)\,t, \qquad
  \phi^\ast\big( \phi'(t) \big) \sim \phi(t),
\end{gather}
where the constants only depend on $\Delta_2(\phi, \phi^\ast)$.

We say that a  N-function $\psi$ is of type $(p_0,p_1)$ with $1 \leq
p_0 \leq p_1 < \infty$, if
\begin{align}
  \label{eq:typepq}
  \psi(st) \leq C\, \max \set{s^{p_0}, s^{p_1}} \psi(t) \qquad
  \text{for all $s,t \geq 0$}.
\end{align}
We also write $\psi \in \frT(p_0,p_1,C)$.
\begin{lemma}
  \label{lem:typepq}
  Let $\psi$ be an N-function with $\psi \in \Delta_2$ together with its conjugate.
  Then $\psi \in \frT(p_0,p_1,C_1)$ for some $1 < p_0 < p_1 <
  \infty$ and $C_1>0$, where $p_0$, $p_1$ and $C_1$ only depend on
  $\Delta_2(\psi, \psi^*)$.
  Moreover, $\psi$ has the representation
  \begin{align}\label{quasiconcave}
    \psi(t)=t^{p_0} \big(h(t)\big)^{p_1-p_0} \qquad \text{for all
      $t\geq 0$},
  \end{align}
  where $h$ is a quasi-concave
  function, i.e.
  \begin{align*}
    h(\lambda t) \leq  C_2\max\set{1,\lambda} h(t) \qquad \text{for all
      $\lambda,t \geq 0$,}
  \end{align*}
  where $C_2$ only depends on $\Delta_2(\psi,\psi^*)$.
\end{lemma}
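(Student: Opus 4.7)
The plan is to extract polynomial upper and lower growth bounds for $\psi$ from the two $\Delta_2$-conditions separately, and then package them as the product representation.

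First I would handle the upper growth from $\Delta_2(\psi)$. Iterating $\psi(2t)\le\Delta_2(\psi)\,\psi(t)$ yields $\psi(2^k t)\le \Delta_2(\psi)^k\,\psi(t)$ for every integer $k\ge 0$. Setting $p_1:=\log_2\Delta_2(\psi)$ and, for $s\ge 1$, choosing $k=\lceil\log_2 s\rceil$, this gives $\psi(st)\le C\,s^{p_1}\,\psi(t)$. For the lower growth I would use duality: the same iteration applied to $\psi^\ast$ gives $\psi^\ast(\sigma\tau)\le C\,\sigma^{q}\,\psi^\ast(\tau)$ for $\sigma\ge 1$, where $q:=\log_2\Delta_2(\psi^\ast)$, and passing back through the Legendre transform $\psi(t)=\sup_{\tau\ge 0}(t\tau-\psi^\ast(\tau))$ (or equivalently via \eqref{ineq:phiast_phi_p_pre}–\eqref{ineq:phiast_phi_p}), this upper bound for $\psi^\ast$ translates into the lower bound $\psi(st)\ge c\,s^{p_0}\,\psi(t)$ for $s\ge 1$, with $p_0:=q/(q-1)>1$. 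Rewriting these two estimates for $s\le 1$ by substituting $s\mapsto 1/s$ and $t\mapsto st$ yields $\psi(st)\le C\,s^{p_0}\,\psi(t)$ and $\psi(st)\ge c\,s^{p_1}\,\psi(t)$. Combining the two upper bounds produces exactly \eqref{eq:typepq}, i.e.\ $\psi\in\frT(p_0,p_1,C_1)$ with constants depending only on $\Delta_2(\psi,\psi^\ast)$.

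For the factorisation I would simply \emph{define}
\begin{align*}
  h(t):=\Bigl(\frac{\psi(t)}{t^{p_0}}\Bigr)^{\frac{1}{p_1-p_0}},
\end{align*}
so that \eqref{quasiconcave} holds by construction. Quasi-concavity of $h$ is then a direct consequence of the two-sided growth: for $\lambda\ge 1$, the upper bound gives $\psi(\lambda t)\le C\lambda^{p_1}\psi(t)$, hence $h(\lambda t)^{p_1-p_0}\le C\lambda^{p_1-p_0}h(t)^{p_1-p_0}$, i.e.\ $h(\lambda t)\le C_2\lambda\,h(t)$; for $\lambda\le 1$ the complementary upper bound $\psi(\lambda t)\le C\lambda^{p_0}\psi(t)$ cancels the factor $\lambda^{p_0}$ in the denominator and yields $h(\lambda t)\le C_2\,h(t)$. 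Together these two cases give exactly $h(\lambda t)\le C_2\max\{1,\lambda\}\,h(t)$, with $C_2$ depending only on $\Delta_2(\psi,\psi^\ast)$.

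The only genuinely delicate step is the translation of $\Delta_2(\psi^\ast)$ into the lower growth estimate $\psi(st)\ge c\,s^{p_0}\psi(t)$ for $s\ge 1$ with $p_0>1$ strictly; a clean way to avoid re-deriving this from scratch is to use $\psi(t)\sim \psi'(t)\,t$ and $\psi^\ast(\psi'(t))\sim\psi(t)$ from \eqref{ineq:phiast_phi_p} to transfer the upper $\Delta_2$-iteration for $\psi^\ast$ directly to a two-sided power comparison for $\psi$. Everything else is algebraic manipulation, and the dependence of $p_0,p_1,C_1,C_2$ on $\Delta_2(\psi,\psi^\ast)$ is transparent from the argument.
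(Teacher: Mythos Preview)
Your overall strategy matches the paper's: extract the two power bounds from the two $\Delta_2$-conditions, then define $h$ so that~\eqref{quasiconcave} holds tautologically and read off quasi-concavity from the type estimate. The paper's treatment of the quasi-concavity of $h$ is essentially your computation.

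Two points are worth flagging. First, for the step you call delicate (turning $\Delta_2(\psi^*)$ into the bound $\psi(st)\le C s^{p_0}\psi(t)$ for $s\le 1$), the paper bypasses the full ``type--$q$ for $\psi^*$ plus duality'' argument. It observes directly that $\psi^*(2\tau)\le K_*\psi^*(\tau)$ (with $K_*:=\max\{\Delta_2(\psi^*),3\}$) yields, after one application of the Legendre transform, the single $\nabla_2$-type inequality $K_*\,\psi(t)\le \psi(K_* t/2)$; iterating \emph{this} inequality gives the $s\le 1$ bound immediately, with $p_0$ determined by $(K_*/2)^{p_0}\le K_*$. This is cleaner than first proving $\psi^*\in\frT(\cdot,q,\cdot)$ and then transferring.

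Second, your specific choices $p_1=\log_2\Delta_2(\psi)$ and $p_0=q/(q-1)$ with $q=\log_2\Delta_2(\psi^*)$ can give $p_0=p_1$ (take $\psi(t)=t^p$), in which case your definition of $h$ involves the exponent $1/(p_1-p_0)$ and breaks down. The paper avoids this by choosing $p_0,p_1$ to satisfy \emph{inequalities} (namely $K\le 2^{p_1}$ and $(K_*/2)^{p_0}\le K_*$) rather than equalities, so one can always enforce $1<p_0<p_1<\infty$ strictly. You should make the same adjustment.
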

\begin{proof}
  Let $K := \Delta_2(\psi)$ and $K_* := \max
  \set{\Delta_2(\psi^*),3}$. Then $\psi^*(2t) \leq K_* \psi^*(t)$ for
  all $t\geq 0$ implies $\psi(t) \leq K_* \psi(2t/K_*)$ for all $t\geq
  0$.  Now, choose $p_0, p_1$ such that $1< p_0 < p_1 < \infty$ and
  $K \leq 2^{p_0}$ and $(K_*/2)^{p_0} \leq K_*$.  We claim that
  \begin{align}
    \label{eq:typepq2}
    \psi(st) \leq C\, \max \set{s^{p_0}, s^{p_1}} \psi(t) \qquad
    \text{for all $s,t \geq 0$},
  \end{align}
  where $C$ only depends on $K$ and $K_*$. Indeed, if $s \geq 1$, then
  choose $m \geq 0$ such that $2^m \leq s \leq 2^{m+1}$. Using $\psi
  \in \Delta_2$, we get
  \begin{align}
    \label{eq:typepq3}
    \psi(st) \leq \psi(2^{m+1} t) \leq K^{m+1} \psi(t) \leq K
    (2^{p_1})^m \psi(t) \leq K s^{p_1} \psi(t).
  \end{align}
  If $s \leq 1$, then we choose $m \in \setN_0$ such that $(K_*/2)^m s \leq
  1 \leq (K_*/2)^{m+1} s$, so that
  \begin{align*}
    \psi(st) &\leq K_*^m \psi \Bigg(\bigg(\frac{2}{K_*}\bigg)^m
    st\Bigg) \leq K_* \bigg( \frac{K_*}{2}\bigg)^{p_0(m-1)} \psi(t)
    \leq K_* s^{p_0} \psi(t).
  \end{align*}
  This proves~\eqref{eq:typepq2}.

  Now, let us  define
  \begin{align*}
    h(u) := \psi \Big( u^{\frac{1}{p_1-p_0}} \Big)
    u^{-\frac{p_0}{p_1-p_0}},
  \end{align*}
  then $\psi$ satisfies~(\ref{quasiconcave}). It remains to show that
  $h$ is quasi-concave. We estimate with~\eqref{eq:typepq2}
  \begin{align*}
    h(su) \leq K\, \psi \Big( u^{\frac{1}{p_1-p_0}} \Big)
    \max\biggset{ s^{\frac{p_1}{p_1-p_0}}, s^{\frac{p_0}{p_1-p_0}} }
    (su)^{\frac{-p_0}{p_1-p_0}} = K \psi(u) \max \set{s,1}
  \end{align*}
  for all $s,u \geq 0$.
\end{proof}
Throughout the paper we will assume that  $\phi$ satisfies the
following assumption.
\begin{assumption}
  \label{ass:phi}
  Let $\phi$ be an N-function such that
  $\phi$ is $C^1$ on $[0,\infty)$ and $C^2$ on $(0,\infty)$. Further
  assume that
  \begin{align}
    \label{eq:phi_pp}
    \phi'(t) &\sim t\,\phi''(t)
  \end{align}
  uniformly in $t > 0$. The constants in~\eqref{eq:phi_pp} are called
  the {\em characteristics of~$\phi$}.
\end{assumption}
We remark that under these assumptions $\Delta_2({\phi,\phi^\ast})
< \infty$ will be automatically satisfied, where
$\Delta_2({\phi,\phi^*})$ depends only on the characteristics
of~$\phi$. 

For given $\phi$ we define the associated N-function $\psi$ by
\begin{align}
  \label{eq:def_psi}
  \psi'(t) &:= \sqrt{ \phi'(t)\,t\,}.
\end{align}

It is shown in~\cite[Lemma 25]{DieE08} that if $\phi$ satisfies
Assumption~\ref{ass:phi}, then also $\phi^*$, $\psi$, and $\psi^*$
satisfy this assumption.

Define $\bfA,\bfV\,:\, \setR^{N \times n} \to \setR^{N \times n}$ in
the following way:
\begin{subequations}
  \label{eq:defAV}
  \begin{align}
    \label{eq:defA}
    \bfA(\bfQ)&=\phi'(|\bfQ|)\frac{\bfQ}{|\bfQ|},
    \\
    \label{eq:defV}
    \bfV(\bfQ)&=\psi'(|\bfQ|)\frac{\bfQ}{|\bfQ|}.
  \end{align}
\end{subequations}
Another important set of tools are the {\rm shifted N-functions}
$\set{\phi_a}_{a \ge 0}$ introduced in~\cite{DieE08}, see
also~\cite{DieK08,RuzDie07}. We define for $t\geq0$
\begin{align}
  \label{eq:phi_shifted}
  \phi_a(t):= \int _0^t \varphi_a'(s)\, ds\qquad\text{with }\quad
  \phi'_a(t):=\phi'(a+t)\frac {t}{a+t}.
\end{align}
Note that $\phi_a(t) \sim \phi'_a(t)\,t$. Moreover, for $t \geq a$ we
have $\phi_a(t) \sim \phi(t)$ and for $t \leq a$ we have $\phi_a(t)
\sim \phi''(a) t^2$.  This implies that
$\phi_a(s\,t) \leq c\, s^2 \phi_a(t)$ for all $s \in [0,1]$, $a \geq
0$ and $t \in [0,a]$. The families $\set{\phi_a}_{a \ge 0}$ and
$\set{(\phi_a)^*}_{a \ge 0}$ satisfy the $\Delta_2$-condition uniformly in $a \ge 0$. 

The connection between $\bfA$, $\bfV$  and the shifted  functions of $\phi$
 is best reflected in the
following lemma \cite[Lemma~2.4]{DieSV09}, see also~\cite{DieE08}.
\begin{lemma}
  \label{lem:hammer}
  Let $\phi$ satisfy Assumption~\ref{ass:phi} and let $\bfA$
  and $\bfV$ be defined by~\eqref{eq:defAV}. Then
  \begin{align*}
    \big({\bfA}(\bfP) - {\bfA}(\bfQ)\big) \cdot \big(\bfP-\bfQ \big)
    &\sim \bigabs{ \bfV(\bfP) - \bfV(\bfQ)}^2 \sim
    \phi_{\abs{\bfP}}(\abs{\bfP - \bfQ}),
    \\
    \bigabs{{\bfA}(\bfP) - {\bfA}(\bfQ)} &\sim
    \phi_{\abs{\bfP}}'(\abs{\bfP - \bfQ}),
    \\
    \intertext{uniformly in $\bfP, \bfQ \in \setR^{N \times n}$ .
      Moreover,} \bfA(\bfQ) \cdot \bfQ \sim \abs{\bfV(\bfQ)}^2 &\sim
    \phi(\abs{\bfQ}),
  \end{align*}
  uniformly in $\bfQ \in \setR^{N \times n}$.
\end{lemma}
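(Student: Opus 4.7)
The last chain $\bfA(\bfQ)\cdot\bfQ \sim |\bfV(\bfQ)|^2 \sim \phi(|\bfQ|)$ is immediate from the definitions. Indeed $\bfA(\bfQ)\cdot\bfQ = \phi'(|\bfQ|)\,|\bfQ|$, while by~\eqref{eq:def_psi} we have $|\bfV(\bfQ)|^2 = \psi'(|\bfQ|)^2 = \phi'(|\bfQ|)\,|\bfQ|$, and both are comparable to $\phi(|\bfQ|)$ by~\eqref{ineq:phiast_phi_p}.

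For the remaining equivalences I would exploit the radial structure of $\bfA$ and $\bfV$. Writing $\Pi_{\bfR} := \bfR\otimes\bfR/|\bfR|^2$, a direct differentiation yields
\begin{align*}
D\bfA(\bfR) \;=\; \phi''(|\bfR|)\,\Pi_{\bfR} \;+\; \tfrac{\phi'(|\bfR|)}{|\bfR|}\,(\mathrm{Id}-\Pi_{\bfR}),
\end{align*}
with the analogous formula for $D\bfV$ with $\psi$ in place of $\phi$. Since Assumption~\ref{ass:phi} applies both to $\phi$ and, by~\cite{DieE08}, to $\psi$, the two eigenvalues of $D\bfA(\bfR)$ are comparable to $\phi''(|\bfR|)$, and similarly for $D\bfV$ with $\psi''(|\bfR|)$. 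Noting that differentiating $(\psi')^2 = \phi'\cdot t$ together with~\eqref{eq:phi_pp} gives $\psi''(t)^2 \sim \phi''(t)$, this produces the pointwise bounds
\begin{align*}
D\bfA(\bfR)(\bfH,\bfH) \sim \phi''(|\bfR|)\,|\bfH|^2, \qquad |D\bfA(\bfR)(\bfH)| \sim \phi''(|\bfR|)\,|\bfH|,
\end{align*}
and the corresponding estimate $|D\bfV(\bfR)(\bfH)|^2 \sim \phi''(|\bfR|)\,|\bfH|^2$.

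Next, I would parametrize the segment by $\gamma_s := \bfQ + s(\bfP-\bfQ)$, $s\in[0,1]$, and apply the fundamental theorem of calculus, obtaining
\begin{align*}
(\bfA(\bfP)-\bfA(\bfQ))\cdot(\bfP-\bfQ) \;\sim\; |\bfP-\bfQ|^2 \int_0^1 \phi''(|\gamma_s|)\,ds,
\end{align*}
and analogous statements for $|\bfA(\bfP)-\bfA(\bfQ)|$ and $|\bfV(\bfP)-\bfV(\bfQ)|^2$ (for the latter one also uses that $\bfV = \nabla_{\bfQ}\,\psi(|\bfQ|)$, whose Hessian is uniformly comparable to $\psi''(|\bfQ|)\mathrm{Id}$ on each subspace, so that both the Cauchy--Schwarz upper bound and a matching lower bound via the positive definiteness of $\int_0^1 D\bfV(\gamma_s)\,ds$ are available). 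The main obstacle, and the technical heart of the lemma, is then the averaging identity
\begin{align*}
|\bfP-\bfQ|^2 \int_0^1 \phi''(|\gamma_s|)\,ds \;\sim\; \phi_{|\bfP|}(|\bfP-\bfQ|),
\end{align*}
together with the analogue with one power of $|\bfP-\bfQ|$ and $\phi'_{|\bfP|}$ on the right. Here $|\gamma_s|$ may range all the way down to $0$, so one cannot simply replace $\phi''(|\gamma_s|)$ by a constant. I would handle this via the representation of Lemma~\ref{lem:typepq}, which writes $\phi$ as a power times a quasi-concave correction, and a dichotomy: in the shifted regime $|\bfP-\bfQ|\lesssim |\bfP|$ one has $|\gamma_s|\sim|\bfP|$ uniformly in $s$ and $\phi_{|\bfP|}(t)\sim\phi''(|\bfP|)\,t^2$, which immediately yields the claim; in the unshifted regime $|\bfP-\bfQ|\gtrsim|\bfP|$ one uses $\phi_a(t)\sim \phi(t)$ for $t\geq a$ and the fact that, by the power-type bounds from Lemma~\ref{lem:typepq}, the integral of $\phi''(|\gamma_s|)$ over $s\in[0,1]$ is comparable to $\phi''(|\bfP-\bfQ|)$. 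Combining the two regimes closes all three equivalences.
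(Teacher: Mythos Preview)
The paper does not give its own proof of this lemma; it is quoted from \cite[Lemma~2.4]{DieSV09} and \cite{DieE08}. Your sketch is essentially the argument found in those references, so it is correct and aligned with the intended proof.

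One remark on efficiency: the ``averaging identity'' you isolate as the technical heart,
\[
\abs{\bfP-\bfQ}^2 \int_0^1 \phi''(\abs{\gamma_s})\,ds \;\sim\; \phi_{\abs{\bfP}}(\abs{\bfP-\bfQ}),
\]
is, after using $\phi''(t)\sim \phi'(t)/t$ and $\phi_a(t)\sim \phi''(a+t)\,t^2$, exactly the content of Lemma~\ref{lem:phi_l_prop} (stated immediately after Lemma~\ref{lem:hammer} and also cited from \cite{DieE08}), together with the elementary equivalence $\abs{\bfP}+\abs{\bfQ}\sim \abs{\bfP}+\abs{\bfP-\bfQ}$. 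Invoking that lemma directly makes your dichotomy argument in the last paragraph unnecessary. With this replacement, the remaining steps---the eigenvalue computation for $D\bfA$ and $D\bfV$, the relation $\psi''(t)^2\sim\phi''(t)$, and the lower bound for $\abs{\bfV(\bfP)-\bfV(\bfQ)}$ via the symmetric positive definiteness of $\int_0^1 D\bfV(\gamma_s)\,ds$---are all correct as written.
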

We state a generalization of Lemma~2.1 in~\cite{AceF84} to the context
of convex functions $\phi$.
\begin{lemma}[Lemma~20, \cite{DieE08}]
  \label{lem:phi_l_prop}
  Let $\phi$ be an N-function with $\Delta_2({\phi,
    \phi^\ast}) < \infty$.  Then uniformly for all $\bfP_0, \bfP_1
  \in \setR^{N \times n}$ with $\abs{\bfP_0}+\abs{\bfP_1}>0$ holds
  \begin{align}
    \label{eq:3}
    \int_0^1 \frac{\phi'( \abs{\bfP_\theta} )}{\abs{\bfP_\theta}}\,
    d\theta &\sim \frac{\phi'(\abs{\bfP_0} +
      \abs{\bfP_1})}{\abs{\bfP_0} + \abs{\bfP_1}},
  \end{align}
  where $\bfP_\theta := (1-\theta)\bfP_0 + \theta \bfP_1$. The
  constants only depend on $\Delta_2({\phi,\phi^\ast})$.
\end{lemma}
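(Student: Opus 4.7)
The plan is to reduce everything to the two-sided power bounds that $\phi$ inherits from Lemma~\ref{lem:typepq}. By the symmetry $(\bfP_0,\theta)\leftrightarrow(\bfP_1,1-\theta)$ I may assume $a:=\abs{\bfP_0}\le b:=\abs{\bfP_1}$ and set $c:=a+b$. Writing $g(t):=\phi'(t)/t$, the relation $\phi'(t)\sim \phi(t)/t$ from~\eqref{ineq:phiast_phi_p} combined with $\phi\in\frT(p_0,p_1,C_1)$ yields
\begin{align*}
  g(st) \le C\,\max\set{s^{p_0-2},s^{p_1-2}}\,g(t) \qquad\text{for all $s,t>0$},
\end{align*}
with $1<p_0\le p_1<\infty$ depending only on $\Delta_2(\phi,\phi^\ast)$. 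So the task is to estimate how $\abs{\bfP_\theta}$ distributes between $0$ and $c$.

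For the upper bound, note that $\abs{\bfP_\theta}\le c$ for every $\theta$, so the displayed inequality with $s=\abs{\bfP_\theta}/c\le1$ gives $g(\abs{\bfP_\theta})\le C\,g(c)\,(\abs{\bfP_\theta}/c)^{p_0-2}$. The reverse triangle inequality yields $\abs{\bfP_\theta}\ge\abs{a-\theta c}$, which, since $p_0-2<0$, flips to $\abs{\bfP_\theta}^{p_0-2}\le\abs{a-\theta c}^{p_0-2}$. A direct one-dimensional computation
\begin{align*}
  \int_0^1 \abs{a-\theta c}^{p_0-2}\,d\theta = \frac{a^{p_0-1}+b^{p_0-1}}{(p_0-1)\,c} \le \frac{2\,c^{p_0-2}}{p_0-1}
\end{align*}
(using $p_0>1$ for integrability) combined with the pointwise bound gives the desired $\int_0^1 g(\abs{\bfP_\theta})\,d\theta\le C'\,g(c)$.

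For the lower bound I isolate a subinterval of $[0,1]$ on which $\abs{\bfP_\theta}$ is comparable to $c$. Setting $\theta^\ast:=(2a+b)/(2c)$, the other form of the reverse triangle inequality gives $\abs{\bfP_\theta}\ge \theta\,b-(1-\theta)\,a=\theta c-a\ge b/2\ge c/4$ for all $\theta\in[\theta^\ast,1]$, while trivially $\abs{\bfP_\theta}\le c$. Because $a\le b$, this interval has length $b/(2c)\ge 1/4$. Splitting the two-sided bound into the subcases $p_1\ge 2$ (use almost-monotonicity of $g(t)\,t^{2-p_1}$) and $p_1<2$ (then $g$ itself is almost nonincreasing) shows $g(s)\gtrsim g(c)$ uniformly for $s\in[c/4,c]$, so
\begin{align*}
  \int_0^1 g(\abs{\bfP_\theta})\,d\theta \ge \int_{\theta^\ast}^1 g(\abs{\bfP_\theta})\,d\theta \gtrsim g(c).
\end{align*}

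The only delicate point is the upper bound in the regime $p_0<2$: the integrand $g(\abs{\bfP_\theta})$ can genuinely blow up near antiparallel configurations where some $\bfP_\theta$ vanishes. The argument works because Lemma~\ref{lem:typepq} supplies the strict inequality $p_0>1$, which is exactly what keeps $\int_0^1\abs{a-\theta c}^{p_0-2}\,d\theta$ finite with a constant depending only on the characteristics of $\phi$.
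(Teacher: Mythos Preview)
The paper does not prove this lemma at all: it is quoted verbatim as ``Lemma~20, \cite{DieE08}'' and used as a black box, so there is no in-paper argument to compare against.

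Your proof is correct. The reduction to the scalar quantity $g(t)=\phi'(t)/t\sim\phi(t)/t^2$ together with the type bound from Lemma~\ref{lem:typepq} is exactly the right move; the reverse triangle inequality $\abs{\bfP_\theta}\ge\abs{a-\theta c}$ is the key pointwise lower bound, and the explicit integral $\int_0^1\abs{a-\theta c}^{p_0-2}\,d\theta=(a^{p_0-1}+b^{p_0-1})/((p_0-1)c)$ cleanly isolates why $p_0>1$ is essential. For the lower bound, restricting to $[\theta^\ast,1]$ where $\abs{\bfP_\theta}\in[c/4,c]$ and using that $g(st)\gtrsim\min\set{s^{p_0-2},s^{p_1-2}}g(t)$ (which follows from the type estimate applied with $s$ replaced by $1/s$) gives the comparison. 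One cosmetic point: your main upper-bound computation is written for $p_0<2$, and only the closing paragraph signals that the case $p_0\ge2$ is trivial (since then $s^{p_0-2}\le1$ for $s\le1$ and no integration against a singular weight is needed); it would be cleaner to state this explicitly up front.
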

Note that~\ref{itm:D2fdiff} and the previous Lemma imply that
\begin{align}
  \label{eq:diffDf}
  \begin{aligned}
    \bigabs{(Df)(\bfQ)-(Df)(\bfP)} &= \biggabs{ \int_0^1 (D^2 f)(\bfP
      + t (\bfQ -\bfP)) (\bfQ - \bfP) \,dt}
    \\
    &\leq c\, \int_0^1 \phi''(\abs{\bfP + t (\bfQ -\bfP))}) \,dt
    \abs{\bfP - \bfQ}
    \\
    &\leq c\, \phi''(\abs{\bfP} + \abs{\bfQ}) \abs{\bfP - \bfQ}
    \\
    &\leq c\, \phi_{\abs{\bfQ}}'(\abs{\bfP-\bfQ}).
  \end{aligned}
\end{align}

The following version of Sobolev-Poincar{\'e} inequality can be found in~\cite[Lemma
7]{DieE08}.
\begin{theorem}[Sobolev-Poincar{\'e}]
  \label{thm:poincare}
  Let $\varphi$ be an N-function with $\Delta_2({\varphi,
    \varphi^\ast}) < \infty$.  Then there exist $0 < \alpha < 1$ and
  $K>0$ such that the following holds. If $B \subset \setR^n$ is some
  ball with radius~$R$ and $\bfw \in W^{1,\varphi}(B,\RN)$, then
  \begin{align}
    \label{eq:poincare} \dashint_B \varphi\bigg( \frac{\abs{\bfw -
        \mean{\bfw}_B}}{R} \bigg) \,dx \leq K\, \bigg( \dashint_B
    \varphi^{\alpha}(\abs{\nabla \bfw}) \,dx
    \bigg)^\frac{1}{\alpha},
  \end{align}
  where $\mean{\bfw}_B := \dashint_B \bfw(x)\, \,dx$.
\end{theorem}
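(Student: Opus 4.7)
The plan is to derive the Orlicz Sobolev-Poincar\'e inequality from the classical pointwise Sobolev representation combined with the polynomial growth of $\varphi$ afforded by Lemma~\ref{lem:typepq}. First I invoke Lemma~\ref{lem:typepq} to fix exponents $1<p_0\le p_1<\infty$, depending only on $\Delta_2(\varphi,\varphi^*)$, with $\varphi\in\frT(p_0,p_1,C_1)$; this gives two-sided polynomial control of $\varphi(t)$ by $t^{p_0}$ and $t^{p_1}$ in the regimes $t\lessgtr 1$, with constants governed only by the characteristics.

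The core step is the classical pointwise Sobolev representation
\begin{align*}
|\bfw(x)-\mean{\bfw}_B|\le c(n)\int_B\frac{|\nabla\bfw(y)|}{|x-y|^{n-1}}\,dy\qquad\text{for a.e. } x\in B,
\end{align*}
to which I apply Hedberg's trick: splitting the integral at some radius $\delta>0$, controlling the near part by $c\,\delta\, M(|\nabla\bfw|\chi_B)(x)$ and the far part by $c\,\delta^{1-n}\int_B|\nabla\bfw|$, and optimizing in $\delta$. This yields a pointwise multiplicative bound
\begin{align*}
\frac{|\bfw(x)-\mean{\bfw}_B|}{R}\le c\, M(|\nabla\bfw|\chi_B)(x)^{1-1/n}\Bigl(\dashint_B|\nabla\bfw|\,dy\Bigr)^{1/n}.
\end{align*}

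Next I fix $\alpha\in(0,1)$ depending only on $n$, $p_0$, $p_1$ (hence only on $n$ and $\Delta_2(\varphi,\varphi^*)$) so that, on the one hand, $1/\alpha$ lies in the boundedness range of the Hardy-Littlewood maximal operator on the relevant Orlicz-type space, and on the other, the type exponents allow the absorption step described below. Applying $\varphi$ to the pointwise bound, integrating over $B$, using the type condition $\varphi\in\frT(p_0,p_1)$ to transfer powers between $M(|\nabla\bfw|\chi_B)$, the spatial average $\dashint_B|\nabla\bfw|\,dy$, and $\varphi^\alpha(|\nabla\bfw|)$, and finally invoking the maximal inequality on $L^{1/\alpha}$, I arrive at the claimed estimate.

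The main obstacle is the coordinated choice of $\alpha$ and the careful bookkeeping in the absorption step, since $|\nabla\bfw|^{p_0}$ and $|\nabla\bfw|^{p_1}$ compare to $\varphi(|\nabla\bfw|)$ with opposite directions in the two regimes $|\nabla\bfw|\lessgtr 1$; the polynomial bounds from Lemma~\ref{lem:typepq} must therefore be deployed case by case. A secondary concern is ensuring that all constants depend only on $n$ and $\Delta_2(\varphi,\varphi^*)$, which is automatic because the exponents $p_0$, $p_1$, $C_1$ produced by Lemma~\ref{lem:typepq} already enjoy exactly this dependence.
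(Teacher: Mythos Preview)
The paper does not prove this theorem at all: it is quoted verbatim from~\cite[Lemma~7]{DieE08} and no argument is given. So there is nothing in the paper to compare your proposal against; your sketch is effectively a proposed substitute for the cited reference.

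Your outline (Riesz potential representation $\Rightarrow$ Hedberg splitting $\Rightarrow$ type bounds from Lemma~\ref{lem:typepq} $\Rightarrow$ maximal inequality) is the right circle of ideas, and the pointwise bound
\[
\frac{|\bfw(x)-\mean{\bfw}_B|}{R}\le c\,\big(M(|\nabla\bfw|\chi_B)(x)\big)^{1-1/n}\Big(\dashint_B|\nabla\bfw|\Big)^{1/n}
\]
is correct. However, the step you flag as the ``main obstacle'' is not actually carried out, and this is exactly where the argument is delicate. Two concrete issues. First, the case split should not be on $|\nabla\bfw|\lessgtr 1$: the type condition $\varphi(st)\le C\max\{s^{p_0},s^{p_1}\}\varphi(t)$ distinguishes $s\lessgtr 1$, and after Hedberg the relevant scaling factor is $s=(A/Mg)^{1/n}$ with $A=\dashint_B|\nabla\bfw|$; since $A\le c\,Mg$ pointwise, only the regime $s\lesssim 1$ occurs, so no case analysis is needed there. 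Second, and more seriously, the passage from the resulting bound $\varphi(v)\lesssim (A/Mg)^{p_0/n}\varphi(Mg)$ to $(\dashint_B\varphi^\alpha(|\nabla\bfw|))^{1/\alpha}$ requires a coordinated choice of~$\alpha$: you need $\alpha$ large enough (essentially $\alpha p_0>1$) so that $\varphi^\alpha$ is quasi-convex and hence $\varphi^\alpha(Mg)\le c\,M(\varphi^\alpha(g))$ together with an $L^{1/\alpha}$ maximal bound, yet at the same time you must retain the $p_0/n$ gain from Hedberg rather than discard it (otherwise you only recover the $\alpha=1$ Poincar\'e inequality). Simply ``invoking the maximal inequality on $L^{1/\alpha}$'' after a naive bound throws away precisely the improvement you are after. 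The argument can be closed, but it needs the gain factor to be converted, via the type bounds, into a factor $\big(\varphi(A)/\varphi(Mg)\big)^{\theta}$ and then combined with a Jensen step for $\varphi^\alpha$; you should either spell this out or follow the proof in~\cite{DieE08}.
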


\section{Caccioppoli estimate}
\label{sec:caccioppoli-estimate}

We need the following simple modification of  lemma 3.1, (Chap. 5) from \cite{Gia82}.
\begin{lemma}
  \label{lem:giamod}
  Let $\psi$ be an N-function with $\psi \in \Delta_2$, let $r>0$ and
 $h \in L^\psi(B_{2r}(x_0))$. Further, let $f\,:\, [r/2,r]
  \to [0, \infty)$ be a bounded function such that for all
  $\frac{r}{2} < s < t < r$
  \begin{align*}
    f(s) \leq \theta f(t) + A \int_{B_t(x_0)} \psi\bigg(
    \frac{\abs{h(y)}}{t-s} \bigg) \,dy
  \end{align*}
  where $A> 0$ and $\theta \in [0,1)$. Then
  \begin{align*}
    f\bigg(\frac{r}{2}\bigg) \leq c(\theta,\Delta_2(\psi))\, A
    \!\!\!\!  \int_{B_{2r}(x_0)} \psi\bigg(\frac{\abs{h(y)}}{2r}\bigg)
    \,dy.
  \end{align*}
\end{lemma}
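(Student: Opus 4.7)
My plan is to prove this via the classical Giaquinta--Giusti hole-filling iteration, adapted so that the polynomial scaling argument is replaced by the type $(p_0,p_1)$ property of $\psi$ provided by Lemma~\ref{lem:typepq}.

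First, I would set up a geometric sequence of radii. Fix $\lambda \in (0,1)$ to be chosen later, and put
\begin{align*}
  \rho_0 := \tfrac{r}{2}, \qquad \rho_{i+1} := \rho_i + \tfrac{r}{2}(1-\lambda)\lambda^i,
\end{align*}
so that $\rho_i \nearrow r$ and $\rho_{i+1}-\rho_i = \tfrac{r}{2}(1-\lambda)\lambda^i$. Applying the hypothesis with $s=\rho_i$, $t=\rho_{i+1}$ gives
\begin{align*}
  f(\rho_i) \leq \theta\, f(\rho_{i+1}) + A \int_{B_{\rho_{i+1}}(x_0)} \psi\!\left( \frac{|h(y)|}{\rho_{i+1}-\rho_i} \right) dy.
\end{align*}
Iterating this $k$ times produces
\begin{align*}
  f(\tfrac{r}{2}) \leq \theta^k f(\rho_k) + A \sum_{i=0}^{k-1} \theta^i \int_{B_r(x_0)} \psi\!\left( \frac{|h(y)|}{\rho_{i+1}-\rho_i} \right) dy.
\end{align*}
Since $f$ is bounded and $\theta \in [0,1)$, the first term vanishes as $k \to \infty$.

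Next, I would use Lemma~\ref{lem:typepq}: since $\psi$ and $\psi^*$ satisfy $\Delta_2$ (which follows from $\psi \in \Delta_2$ together with its conjugate; here we actually only need $\psi \in \Delta_2$ and will apply the upper type bound with exponent $p_1$). Writing
\begin{align*}
  \frac{|h|}{\rho_{i+1}-\rho_i} = \frac{2}{(1-\lambda)\lambda^i}\cdot \frac{|h|}{r},
\end{align*}
the type property yields, for some $p_1 \in (1,\infty)$ depending only on $\Delta_2(\psi,\psi^*)$,
\begin{align*}
  \psi\!\left( \frac{|h|}{\rho_{i+1}-\rho_i} \right) \leq C\, \Big(\frac{2}{1-\lambda}\Big)^{p_1} \lambda^{-i p_1}\, \psi\!\left( \frac{|h|}{r} \right).
\end{align*}
Now I would choose $\lambda$ so close to $1$ that $\theta\, \lambda^{-p_1} < 1$, e.g.\ $\lambda := \theta^{1/(2p_1)} \vee \tfrac{1}{2}$; then the geometric series $\sum_i (\theta \lambda^{-p_1})^i$ converges to a constant depending only on $\theta$ and $\Delta_2(\psi)$.

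Combining these steps gives
\begin{align*}
  f(\tfrac{r}{2}) \leq c(\theta,\Delta_2(\psi))\, A \int_{B_r(x_0)} \psi\!\left( \frac{|h(y)|}{r} \right) dy.
\end{align*}
Finally, I would pass from the ball $B_r$ to $B_{2r}$ and from the scale $1/r$ to $1/(2r)$: trivially $\int_{B_r} \leq \int_{B_{2r}}$, and since $|h|/r = 2\cdot|h|/(2r)$, a single application of $\Delta_2$ absorbs the factor $2$, yielding the claimed bound. The only real delicate point is selecting $\lambda$ in terms of the abstract exponent $p_1$ coming from Lemma~\ref{lem:typepq}; once that is in hand, the iteration is completely standard and the $\theta^k f(\rho_k)$ remainder disappears by boundedness of $f$.
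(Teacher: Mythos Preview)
Your proof is correct and follows essentially the same approach as the paper: both arguments use the $\Delta_2$-condition to extract a polynomial upper type bound $\psi(\lambda u)\le C\lambda^{p_1}\psi(u)$ for $\lambda\ge 1$ and then reduce to the classical Giaquinta iteration. The only difference is cosmetic: the paper first rewrites the integrand as $\psi(|h|/(2r))\cdot C_2(2r)^{p_1}(t-s)^{-p_1}$ and then invokes \cite[Chap.~5, Lemma~3.1]{Gia82} as a black box, whereas you carry out the iteration with the geometric radii explicitly; the content is the same.
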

\begin{proof}
  Since $\psi \in \Delta_2$, there exists $C_2>0$ and $p_1 < \infty$
  (both depending only on $\Delta_2(\psi)$) such that $\psi(\lambda u)
  \leq C_2 \lambda^{p_1} \psi(u)$ for all $\lambda \geq 1$ and $u \geq
  0$ (compare~\eqref{eq:typepq3} of Lemma~\ref{lem:typepq}).  This
  implies
  \begin{align*}
    f(t) &\leq \theta f(s) + \underbrace{A\int_{B_s(x_0)}
      \psi\bigg( \frac{\abs{h(y)}}{2r} \bigg) \,dy\; C_2
      (2r)^{p_1}}_{=:A}\, (s-t)^{-p_1}.
  \end{align*}
  Now Lemma~3.1 in  \cite{Gia82}, with $\alpha := p_1$ implies
  \begin{align*}
    f\bigg(\frac{r}{2}\bigg) \leq c(\theta,p_1) A \int_{B_s(x_0)}
      \psi\bigg( \frac{\abs{h(y)}}{2r} \bigg) \,dy\; C_2
      (2r)^{p_1} r^{-p_1},
  \end{align*}
  which proves the claim.
\end{proof}

\begin{theorem}
  \label{thm:cacc}
 Let $\bfu\in W^{1,\phi}_{loc}(\Omega)$ be a local minimizer of $\mathcal{F}$ and
 $B$ be a ball with radius~$R$ such that  $2B\subset  \subset \Omega$. Then 
    \begin{align*}
      \int_{B} \phi_{\abs{\bfQ}}(\abs{\nabla \bfu - \bfQ})\,dx \leq c\,
      \int_{2B} \phi_{\abs{\bfQ}}\bigg( \frac{\abs{\bfu - \bfq}}{R}
      \bigg) \,dx
  \end{align*}
  for all $\bfQ \in \RNn$ and all linear polynomials $\bfq$ on $\Rn$
  with values in $\RN$ and $\nabla\bfq=\bfQ$, where $c$ only depends
  on $n$, $N$, $k$, $K$ and the characteristics of $\phi$.
\end{theorem}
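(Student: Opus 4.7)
The plan is to run a hole-filling iteration starting from the strict $W^{1,\phi}$-quasiconvexity of $f$ and the minimality of $\bfu$, with a Taylor expansion at $\bfQ$ arranged so that the uncontrollable first-order terms cancel identically. Set $\bfv := \bfu - \bfq$, so that $\nabla\bfv = \nabla\bfu - \bfQ$. For radii $R \leq s < t \leq 2R$ pick a cutoff $\eta \in C^\infty_0(B_t)$ with $\eta \equiv 1$ on $B_s$ and $\abs{\nabla\eta}\leq c/(t-s)$, and split $\bfv = \bfw + \bfz$ with $\bfw := \eta\bfv \in W^{1,\phi}_0(B_t)$ and $\bfz := (1-\eta)\bfv$. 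Then $\nabla\bfz = 0$ and $\nabla\bfw = \nabla\bfv$ on $B_s$, and $\nabla\bfu - \nabla\bfw = \bfQ + \nabla\bfz$.

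Applying~\ref{itm:fquasi} to $\bfw$ gives
\[
k\int_{B_t}\phi_{\abs{\bfQ}}(\abs{\nabla\bfw})\,dx \leq \int_{B_t}\bigl[f(\bfQ+\nabla\bfw)-f(\bfQ)\bigr]\,dx,
\]
while testing the minimality of $\bfu$ against $-\bfw$ (which is compactly supported in $B_t$) yields
\[
\int_{B_t} f(\nabla\bfu)\,dx \leq \int_{B_t} f(\bfQ+\nabla\bfz)\,dx.
\]
Splitting each integral according to $B_s$ and $B_t\setminus B_s$, and using that $\nabla\bfz=0$ and $\bfQ+\nabla\bfw=\nabla\bfu$ on $B_s$, the two inequalities combine (by eliminating $\int_{B_s}[f(\nabla\bfu)-f(\bfQ)]$) into
\[
k\int_{B_s}\phi_{\abs{\bfQ}}(\abs{\nabla\bfv})\,dx \leq \int_{B_t\setminus B_s}\bigl[f(\bfQ+\nabla\bfw)+f(\bfQ+\nabla\bfz)-f(\nabla\bfu)-f(\bfQ)\bigr]\,dx.
\]

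The decisive step is the second-order Taylor estimate
\[
\bigabs{f(\bfQ+\bfP)-f(\bfQ)-Df(\bfQ)\cdot\bfP} \leq c\,\phi_{\abs{\bfQ}}(\abs{\bfP}),
\]
which follows from~\ref{itm:D2fbnd} combined with $\phi'(a)\sim a\phi''(a)$ so that $\int_0^1(1-t)\phi''(|\bfQ|+t|\bfP|)\,dt\cdot\abs{\bfP}^2 \sim \phi_{\abs{\bfQ}}(\abs{\bfP})$ (the case $\bfQ=0$, where $f(0)=0$ and $Df(0)=0$ by~\ref{itm:fbnd}~and~\ref{itm:D2fbnd}, reduces to the trivial bound $\abs{f(\bfP)}\leq c\phi(\abs{\bfP})$, and segments through the origin are handled by approximation). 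Substituting $\bfP\in\{\nabla\bfw,\nabla\bfz,\nabla\bfv\}$ in the integrand above, the linear parts combine into $Df(\bfQ)\cdot(\nabla\bfw+\nabla\bfz-\nabla\bfv)\equiv 0$, leaving only the quadratic remainders, so
\[
k\int_{B_s}\phi_{\abs{\bfQ}}(\abs{\nabla\bfv})\,dx \leq c\int_{B_t\setminus B_s}\bigl[\phi_{\abs{\bfQ}}(\abs{\nabla\bfw})+\phi_{\abs{\bfQ}}(\abs{\nabla\bfz})+\phi_{\abs{\bfQ}}(\abs{\nabla\bfv})\bigr]\,dx.
\]
On $B_t\setminus B_s$ one has $\abs{\nabla\bfw},\abs{\nabla\bfz}\leq \abs{\nabla\bfv}+c\abs{\bfv}/(t-s)$, and the uniform $\Delta_2$-property of $\{\phi_a\}_{a\geq 0}$ bounds the right-hand side by $c\int_{B_t\setminus B_s}\phi_{\abs{\bfQ}}(\abs{\nabla\bfv})\,dx + c\int_{B_t}\phi_{\abs{\bfQ}}(\abs{\bfv}/(t-s))\,dx$.

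To close, add $c\int_{B_s}\phi_{\abs{\bfQ}}(\abs{\nabla\bfv})\,dx$ to both sides (the hole-filling trick) to produce
\[
\int_{B_s}\phi_{\abs{\bfQ}}(\abs{\nabla\bfv})\,dx \leq \theta\int_{B_t}\phi_{\abs{\bfQ}}(\abs{\nabla\bfv})\,dx + c\int_{B_t}\phi_{\abs{\bfQ}}\!\bigg(\frac{\abs{\bfv}}{t-s}\bigg)\,dx
\]
for some $\theta\in(0,1)$ depending only on $k$ and the characteristics of $\phi$, uniformly for $R\leq s<t\leq 2R$. Lemma~\ref{lem:giamod} applied with $\psi=\phi_{\abs{\bfQ}}$ (which is $\Delta_2$ uniformly in $\abs{\bfQ}$) at scale $2R$ then delivers the stated Caccioppoli inequality. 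The main obstacle is engineering the Taylor cancellation: without the shifted remainder bound, the stray term $\int Df(\bfQ)\cdot\nabla\bfv$ would survive and cannot be dispatched by integration by parts because $\bfv$ fails to vanish on $\partial B_t$.
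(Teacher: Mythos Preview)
Your proof is correct and follows essentially the same route as the paper: the same cutoff decomposition $\bfw=\eta(\bfu-\bfq)$, $\bfz=(1-\eta)(\bfu-\bfq)$, the quasiconvexity lower bound, minimality, hole-filling, and Lemma~\ref{lem:giamod}. The only cosmetic difference is that you control the error on $B_t\setminus B_s$ via the second-order Taylor remainder $\abs{f(\bfQ+\bfP)-f(\bfQ)-Df(\bfQ)\cdot\bfP}\leq c\,\phi_{\abs{\bfQ}}(\abs{\bfP})$ with the linear parts cancelling algebraically, whereas the paper rewrites the corresponding terms via first-derivative differences $\abs{Df(\bfP)-Df(\bfQ)}\leq c\,\phi'_{\abs{\bfQ}}(\abs{\bfP-\bfQ})$ (eq.~\eqref{eq:diffDf}) and then absorbs a term by Young's inequality.
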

\begin{proof}
  Let $0 < s < t$. Further, let $B_s$ and $B_t$ be balls in $\Omega$
  with the same center and with radius $s$ and $t$, respectively.
  Choose $\eta \in C^\infty_0(B_t)$ with $\chi_{B_s} \leq \eta \leq
  \chi_{B_t}$ and $\abs{\nabla \eta} \leq c/(t-s)$. Now,
  define $\bfxi := \eta (\bfu - \bfq)$ and $\bfz := (1-\eta)(\bfu -
  \bfq)$.  Then $\nabla \bfxi + \nabla \bfz = \nabla \bfu - \bfQ$.
  Consider
  \begin{align*}
    I := \int_{B_t} f(\bfQ + \nabla \bfxi) - f(\bfQ) \,dx.
  \end{align*}
  Then by the quasi-convexity of~$f$, see~\ref{itm:fquasi}, follows
  \begin{align*}
    I &\geq c \int_{B_t} \phi_{\abs{\bfQ}}(\abs{\nabla \bfxi})\,dx.
  \end{align*}
  On the other hand since $\nabla \bfxi + \nabla \bfz = \nabla \bfu -
  \bfQ$ we get
  \begin{align*}
    I &= \int_{B_t} f(\bfQ + \nabla \bfxi) - f(\bfQ) \,dx
    \\
    &= \int_{B_t} f(\bfQ + \nabla \bfxi) - f(\bfQ + \nabla \bfxi +
    \nabla \bfz) \,dx
    \\
    &\quad + \int_{B_t} f(\nabla \bfu) - f(\nabla \bfu - \nabla \bfxi)
    \,dx
    \\
    &\quad + \int_{B_t} f(\bfQ + \nabla \bfz) - f(\bfQ) \,dx
    \\
    &=: II + III + IV.
  \end{align*}
  Since $\bfu$ is a local minimizer, we know that $(III) \leq
  0$. Moreover,
  \begin{align*}
    II + IV &= \int_{B_t} \int_0^1 \big((Df)(\bfQ + t \nabla \bfz)
    -(Df)(\bfQ + \nabla \bfxi - t \nabla \bfz)\big) \nabla \bfz \,dt
    \,dx
    \\
    &= \int_{B_t} \int_0^1 \big((Df)(\bfQ + t \nabla \bfz) -(Df)(\bfQ)
    \nabla \bfz \,dt \,dx
    \\
    &\quad - \int_{B_t} \int_0^1 \big((Df)(\bfQ + \nabla \bfxi - t
    \nabla \bfz)-(Df)(\bfQ) \big) \nabla \bfz \,dt \,dx .
  \end{align*}
  This proves
  \begin{align*}
    |II| + |IV| &\leq c \int_{B_t} \int_0^1
    \phi'_{\abs{\bfQ}}(t\abs{\nabla \bfz})\,dt \abs{\nabla \bfz} \,dx
    \\
    &\quad + c \int_{B_t} \int_0^1 \phi'_{\abs{\bfQ}}(\abs{\nabla \bfxi
      - t \nabla \bfz}) \,dt \abs{\nabla \bfz} \,dx
  \end{align*}
  Using $\phi'_{\abs{\bfQ}}(\abs{\nabla \bfxi
    - t \nabla \bfz}) \leq c\, \phi'_{\abs{\bfQ}}(\abs{\nabla \bfxi})
  + c\, \phi'_{\abs{\bfQ}}(\abs{\bfz})$, we get
  \begin{align*}
    |II| + |IV| &\leq c \int_{B_t} \phi_{\abs{\bfQ}}(\abs{\nabla \bfz})
    \,dx + c \int_{B_t} \phi'_{\abs{\bfQ}}(\abs{\nabla \bfxi })
    \abs{\nabla \bfz} \,dx
    \\
    &\leq c \int_{B_t} \phi_{\abs{\bfQ}}(\abs{\nabla \bfz}) \,dx +
    \frac{1}{2} (I),
  \end{align*}
  where we have used Young's inequality in the last step. Overall, we
  have shown the a~priori estimate
  \begin{align}
    \label{eq:capaux1}
    \int_{B_t} \phi_{\abs{\bfQ}}(\abs{\nabla \bfxi})\,dx \leq c\,
    \int_{B_t} \phi_{\abs{\bfQ}}(\abs{\nabla \bfz}) \,dx.
  \end{align}
  Note that $\nabla \bfz = (1-\eta) (\nabla \bfu - \bfQ) - \nabla \eta
  (\bfu - \bfq)$, which is zero outside $B_t \setminus B_s$. Hence,
  \begin{align*}
    \int_{B_t} \phi_{\abs{\bfQ}}(\abs{\nabla \bfxi})\,dx \leq c\,
    \int_{B_t\setminus B_s} \phi_{\abs{\bfQ}}(\abs{\nabla \bfu - \bfQ})
    \,dx + c\, \int_{B_t} \phi_{\abs{\bfQ}}\bigg( \frac{\abs{\bfu -
        \bfq}}{t-s} \bigg) \,dx.
  \end{align*}
  Since $\eta = 1$ on $B_s$, we get
  \begin{align*}
    \int_{B_s} \phi_{\abs{\bfQ}}(\abs{\nabla \bfu - \bfQ})\,dx \leq
    c\, \int_{B_t\setminus B_s} \phi_{\abs{\bfQ}}(\abs{\nabla \bfu -
      \bfQ}) \,dx + c\, \int_{B_t} \phi_{\abs{\bfQ}}\bigg(
    \frac{\abs{\bfu - \bfq}}{t-s} \bigg) \,dx.
  \end{align*}
  The hole-filling technique proves
  \begin{align*}
    \int_{B_s} \phi_{\abs{\bfQ}}(\abs{\nabla \bfu - \bfQ})\,dx \leq \lambda
    \int_{B_t} \phi_{\abs{\bfQ}}(\abs{\nabla \bfu - \bfQ}) \,dx + c\,
    \int_{B_t} \phi_{\abs{\bfQ}}\bigg( \frac{\abs{\bfu - \bfq}}{t-s}
    \bigg) \,dx
  \end{align*}
  for some $\lambda \in (0,1)$, which is independent of $\bfQ$ and
  $\bfq$. Now Lemma  \ref{lem:giamod} proves the claim.
\end{proof}

\begin{corollary}
  \label{cor:gehring2} There exists $0<\alpha<1$ such that for all
  local minimizers $\bfu\in W^{1,\phi}_{loc}(\Omega)$ of
  $\mathcal{F}$, all balls $B$ with $2B \subset \subset \Omega$ and
  all~$\bfQ \in\RNn$
  \begin{align*}
    \dashint_B \abs{\bfV(\nabla \bfu) -
      \bfV(\bfQ)}^2\,dx &\leq
    c\, \bigg(\dashint_{2B} \abs{\bfV(\nabla \bfu) -
\bfV(\bfQ)}^{2\alpha}\,dx\bigg)^{\frac{1}{\alpha}}
  \end{align*}
\end{corollary}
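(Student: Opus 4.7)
\bigskip

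\noindent\textbf{Proof plan for Corollary \ref{cor:gehring2}.} The approach is to combine the Caccioppoli inequality of Theorem \ref{thm:cacc} with the Sobolev--Poincar{\'e} inequality of Theorem \ref{thm:poincare}, using Lemma \ref{lem:hammer} to translate between $\phi_{\abs{\bfQ}}(\abs{\nabla \bfu - \bfQ})$ and $\abs{\bfV(\nabla \bfu) - \bfV(\bfQ)}^2$. The crucial observation is that Sobolev--Poincar{\'e} yields a gain in integrability (expressed through the exponent $\alpha < 1$ on the right), which is exactly what produces the reverse H\"{o}lder inequality.

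First, I fix a ball $B$ of radius $R$ with $2B \subset \subset \Omega$ and an arbitrary $\bfQ \in \RNn$. Among all affine maps $\bfq$ with $\nabla \bfq = \bfQ$, I select the unique one such that $\mean{\bfu - \bfq}_{2B} = 0$. The equivalence in Lemma \ref{lem:hammer} together with Theorem \ref{thm:cacc} applied on $B$ (noting the mean-zero choice does not affect the gradient on the left) gives
\begin{align*}
  \dashint_B \abs{\bfV(\nabla \bfu) - \bfV(\bfQ)}^2\,dx
  \;\sim\; \dashint_B \phi_{\abs{\bfQ}}(\abs{\nabla \bfu - \bfQ})\,dx
  \;\leq\; c\, \dashint_{2B} \phi_{\abs{\bfQ}}\!\left(\frac{\abs{\bfu - \bfq}}{R}\right) dx.
\end{align*}

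Next, I recall that the family $\{\phi_a\}_{a \geq 0}$ satisfies $\Delta_2$ uniformly in $a$ together with its conjugate, with constants depending only on the characteristics of~$\phi$. Hence Theorem~\ref{thm:poincare} applies to the N-function $\varphi := \phi_{\abs{\bfQ}}$ with constants independent of~$\bfQ$. Setting $\bfw := \bfu - \bfq$, which has zero mean on $2B$ and satisfies $\nabla \bfw = \nabla \bfu - \bfQ$, and absorbing the factor $2$ appearing when replacing the radius $R$ by $2R$ via $\Delta_2$, I get
\begin{align*}
  \dashint_{2B} \phi_{\abs{\bfQ}}\!\left(\frac{\abs{\bfu - \bfq}}{R}\right) dx
  \;\leq\; c\, \dashint_{2B} \phi_{\abs{\bfQ}}\!\left(\frac{\abs{\bfw - \mean{\bfw}_{2B}}}{2R}\right) dx
  \;\leq\; c\, \bigg( \dashint_{2B} \phi_{\abs{\bfQ}}^{\alpha}(\abs{\nabla \bfu - \bfQ})\,dx \bigg)^{\!1/\alpha}
\end{align*}
for some $\alpha \in (0,1)$ depending only on the characteristics of~$\phi$.

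Finally, a second application of Lemma~\ref{lem:hammer} converts $\phi_{\abs{\bfQ}}(\abs{\nabla \bfu - \bfQ})$ back to $\abs{\bfV(\nabla\bfu) - \bfV(\bfQ)}^2$ inside the $\alpha$-th power, and chaining the two displays yields the claimed estimate. The only nontrivial point is to check that the constants from Sobolev--Poincar{\'e} and from the equivalence in Lemma~\ref{lem:hammer} can be taken independent of $\bfQ$; this is exactly the uniform $\Delta_2$ property of the shifted family $\{\phi_a\}$ together with its conjugate, so no new technical obstacle arises beyond what has already been developed in Section~\ref{sec:notation}.
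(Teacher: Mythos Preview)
Your proof is correct and follows essentially the same approach as the paper: apply the Caccioppoli estimate (Theorem~\ref{thm:cacc}) with a linear $\bfq$ chosen so that $\mean{\bfu-\bfq}_{2B}=0$, then feed the result into the Sobolev--Poincar{\'e} inequality (Theorem~\ref{thm:poincare}) for $\bfw=\bfu-\bfq$, using Lemma~\ref{lem:hammer} to pass between $\phi_{\abs{\bfQ}}(\abs{\nabla\bfu-\bfQ})$ and $\abs{\bfV(\nabla\bfu)-\bfV(\bfQ)}^2$. Your version is simply more explicit about the translation via Lemma~\ref{lem:hammer} and about the uniformity of the constants in~$\bfQ$.
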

\begin{proof}
Apply Theorem \ref{thm:cacc} with $\bfq$ such that $\langle \bfu - \bfq\rangle_{2B}=0$. Then use
Theorem \ref{thm:poincare} with $\bfw(x)=\bfu(x)-\bfQ x$.
\end{proof}

Using Gehring's Lemma we deduce the following assertion.
\begin{corollary}
  \label{cor:gehring3} 
  There exists $s_0>1$ such that for all local minimizers
  $\bfu\in W^{1,\phi}_{loc}(\Omega)$ of $\mathcal{F}$, all balls $B$
  with $2B \subset \subset \Omega$ and all~$\bfQ \in\RNn$
  \begin{align*}
    \bigg(\dashint_B \abs{\bfV(\nabla \bfu) -
      \bfV(\bfQ)}^{2s_0}\,dx\bigg)^\frac{1}{s_0} &\leq c\,
    \dashint_{2B} \abs{\bfV(\nabla \bfu) - \bfV(\bfQ)}^{2}\,dx.
  \end{align*}
\end{corollary}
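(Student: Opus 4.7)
The plan is a direct application of Gehring's self-improving lemma to the reverse H\"older inequality in Corollary~\ref{cor:gehring2}. Fix $\bfQ \in \RNn$ and set $g := \abs{\bfV(\nabla\bfu) - \bfV(\bfQ)}^{2\alpha}$. Then Corollary~\ref{cor:gehring2}, after raising both sides to the power~$\alpha$, may be rewritten as
\begin{align*}
  \bigg(\dashint_B g^{1/\alpha}\,dx\bigg)^{\alpha} \leq c\, \dashint_{2B} g\, dx,
\end{align*}
which is precisely the standard hypothesis of Gehring's lemma with exponent $q := 1/\alpha > 1$. Crucially, the constant $c$ is independent of the ball~$B$ and of~$\bfQ$, so the hypothesis holds uniformly for every ball with $2B \subset\subset \Omega$.

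Gehring's lemma then produces some $\varepsilon = \varepsilon(n, \alpha, c) > 0$ and a new constant $c'$ such that
\begin{align*}
  \bigg(\dashint_B g^{1/\alpha + \varepsilon}\,dx\bigg)^{1/(1/\alpha + \varepsilon)} \leq c'\, \dashint_{2B} g\, dx.
\end{align*}
Setting $s_0 := 1 + \alpha\varepsilon > 1$ and unwrapping the definition of~$g$, this reads
\begin{align*}
  \bigg(\dashint_B \abs{\bfV(\nabla\bfu) - \bfV(\bfQ)}^{2s_0}\,dx\bigg)^{\alpha/s_0} \leq c'\, \dashint_{2B} \abs{\bfV(\nabla\bfu) - \bfV(\bfQ)}^{2\alpha}\,dx.
\end{align*}
Raising both sides to the power $1/\alpha$ and finally applying Jensen's inequality $\bigl(\dashint_{2B} \abs{\bfV(\nabla\bfu) - \bfV(\bfQ)}^{2\alpha}\, dx\bigr)^{1/\alpha} \leq \dashint_{2B} \abs{\bfV(\nabla\bfu) - \bfV(\bfQ)}^{2}\, dx$, which is valid since $1/\alpha \geq 1$, yields the claim with $c = (c')^{1/\alpha}$.

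No genuine difficulty arises here; the only point worth checking is that the constant in Corollary~\ref{cor:gehring2} is indeed uniform in both $B$ and $\bfQ$, so that Gehring's lemma applies with a universal exponent. This uniformity is inherited from Theorem~\ref{thm:cacc} and Theorem~\ref{thm:poincare}, whose constants depend only on $n$, $N$, $k$, $K$, and the characteristics of~$\phi$. Consequently $s_0$ and $c$ in the statement share this dependence.
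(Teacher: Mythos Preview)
Your proposal is correct and matches the paper's approach exactly: the paper simply states ``Using Gehring's Lemma we deduce the following assertion'' without further detail, and you have spelled out precisely this application, including the bookkeeping with $g = \abs{\bfV(\nabla\bfu)-\bfV(\bfQ)}^{2\alpha}$ and the final Jensen step.
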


\section{\texorpdfstring{The $\mathcal{A}$-harmonic approximation}{The
    A-harmonic approximation}}
\label{sec:mathcala-harmonicity}

In this section we present a generalization of the
$\mathcal{A}$-harmonic approximation lemma in Orlicz spaces. 
Basically it  says that if a function locally ``almost'' behaves
like an $\mathcal{A}$-harmonic function, then it is close to an
$\mathcal{A}$-harmonic function. The proof is based on the Lipschitz
truncation technique, which goes back to Acerbi- Fusco~\cite{AceF84}
but has been refined by many others.

Orginally the closeness of the function to its $\mathcal{A}$-harmonic
approximation was stated in terms of the $L^2$-distance and later for
the non-linear problems in terms of the $L^p$-distance. Based on a
refinement of the Lipschitz truncation technique~\cite{DieMS08}, it
has been shown in~\cite{DieStrVer10} that also the distance in terms
of the gradients is small.

Let us consider the following elliptic system
\begin{align*}
  - \partial_\alpha( \mathcal{A}_{ij}^{\alpha\beta} D_\beta u^j) &=
  -\partial_\alpha H_i^\alpha &\qquad& \text{in $B$},
\end{align*}
where $\alpha,\beta=1, \dots,n$ and $i,j=1, \dots N$. We use the
convention that repeated indices are summed. In short we write
$-\divergence(\mathcal{A} \nabla \bfu)= -\divergence\bfG$. We assume
that $\mathcal{A}$ is  constant.
We say that $\mathcal{A}$ is {\em strongly elliptic in
  the sense of Legendre-Hadamard} if for all $\bfa\in \RN, \bfb \in
\Rn$ holds
\begin{align*}
  \mathcal{A}_{ij}^{\alpha\beta} a^i b_\alpha a^j b_\beta &\geq
  \kappaA \abs{\bfa}^2 \abs{\bfb}^2
\end{align*}
for some $\kappaA>0$. The biggest possible constant~$\kappaA$ is
called the ellipticity constant of~$\mathcal{A}$. By
$\abs{\mathcal{A}}$ we denote the Euclidean norm of~$\mathcal{A}$.  We
say that a Sobolev function $\bfw$ on a ball~$B$ is
$\mathcal{A}$-harmonic, if it satisfies $-\divergence (\mathcal{A}
\nabla \bfw)=0$ in the sense of distributions.

Given a Sobolev function $\bfu$ on a ball~$B$ we want to find an
$\mathcal{A}$-harmonic function~$\bfh$ which is close the our
function~$\bfu$. The way  to find ~$\bfh$ is very simple: it
will be the $\mathcal{A}$-harmonic function with the same boundary
values as $\bfu$. In particular, we want to find a Sobolev function~$\bfh$
which satisfies
\begin{alignat}{2}
  \label{eq:calA1}
  \begin{aligned}
    -\divergence (\mathcal{A} \nabla \bfh) &= 0 &\qquad&\text{on $B$}
    \\
    \bfh&= \bfu &\qquad&\text{on $\partial B$}
  \end{aligned}
\end{alignat}
in the sense of distributions.

Let $\bfw := \bfh - \bfu$, then~\eqref{eq:calA1} is equivalent to finding
a Sobolev function~$\bfw$ which satisfies
\begin{alignat}{2}
  \label{eq:calA2}
  \begin{aligned}
    -\divergence (\mathcal{A} \nabla \bfw) &= -\divergence(\mathcal{A}
    \nabla \bfu) &\qquad&\text{on $B$}
    \\
    \bfw&= \bfzero &\qquad&\text{on $\partial B$}
  \end{aligned}
\end{alignat}
in the sense of distributions.  

Our main approximation result is the following.
\begin{theorem}
  \label{thm:Aappr_phi}
  Let $B \subset \subset \Omega$ be a ball with radius~$r_B$ and let
  $\widetilde{B} \subset \Omega$ denote either~$B$ or $2B$. Let
  $\mathcal{A}$ be strongly elliptic in the sense of
  Legendre-Hadamard.  Let $\psi$ be an N-function with $
  \Delta_2(\psi,\psi^*)<\infty$ and let $s>1$. Then for every $\epsilon>0$,
  there exists $\delta>0$ only depending on $n$, $N$, $\kappa_A$,
  $\abs{\mathcal{A}}$, $\Delta_2(\psi,\psi^*)$ and~$s$ such
  that the following holds: let $\bfu \in W^{1,\psi}(\widetilde{B})$
  be {\em almost $\mathcal{A}$-harmonic} on~$B$ in the sense that
  \begin{align}
    \label{eq:Aappr_ah}
    \biggabs{\dashint_B \mathcal{A}\nabla \bfu \cdot \nabla \bfxi\,dx}
    \leq \delta \dashint_{\widetilde{B}} \abs{\nabla \bfu}\,dx\,
    \norm{\nabla \bfxi}_{L^\infty(B)}
  \end{align}
  for all $\bfxi \in C^\infty_0(B)$. Then the unique solution $\bfw
  \in W^{1, \psi}_0(B)$ of~\eqref{eq:calA2} satisfies
  \begin{align}
    \label{eq:Aappr_est}
    \dashint_B \psi\bigg(\frac{\abs{\bfw}}{r_B}\bigg)\,dx + \dashint_B
    \psi(\abs{\nabla \bfw})\,dx \leq \epsilon \Bigg(
    \bigg(\dashint_{B} \big(\psi(\abs{\nabla \bfu})\big)^s
    \,dx\bigg)^{\frac 1s} + \dashint_{\widetilde{B}} \psi(\abs{\nabla
      \bfu}) \,dx\Bigg).
  \end{align}
\end{theorem}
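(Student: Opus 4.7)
The plan is a direct proof via Lipschitz truncation, in the spirit of \cite{DieMS08,BreDieFuc12}, applied on the test-function side because $\mathcal{A}$ is only Legendre--Hadamard (not pointwise) elliptic. First I would solve the Dirichlet problem \eqref{eq:calA2} in $W^{1,\psi}_0(B)$ and record the a priori bound
\begin{align*}
\int_B \psi(\abs{\nabla\bfw})\,dx \;\leq\; c\int_B \psi(\abs{\nabla\bfu})\,dx,
\end{align*}
which is the standard Orlicz Calder\'on--Zygmund estimate for constant-coefficient Legendre--Hadamard systems and is inherited from the $L^p$ theory through the $\Delta_2$-condition on $\psi$ and $\psi^*$. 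This already yields \eqref{eq:Aappr_est} with a \emph{fixed} constant in place of $\epsilon$; the real content of the theorem is that the smallness of \eqref{eq:Aappr_ah} forces that constant to be as small as we like.

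At the heart of the argument lies the Lipschitz truncation $\bfw_\lambda$ of $\bfw$ at level $\lambda>0$: a function in $W^{1,\infty}_0(B)$, coinciding with $\bfw$ on the good set $G_\lambda := B \cap \{M(\chi_B\abs{\nabla\bfw}) \leq \lambda\}$, with $\|\nabla\bfw_\lambda\|_\infty \leq c\lambda$ and weak-type bound $\abs{B\setminus G_\lambda} \leq c\lambda^{-1}\int_B\abs{\nabla\bfw}\,dx$ (plus its natural $L^\psi$-variant, valid because $\Delta_2(\psi,\psi^*)<\infty$ implies boundedness of $M$ on $L^\psi$). Since $\bfw_\lambda$ is, after mollification, an admissible Lipschitz compactly supported test function both in \eqref{eq:calA2} and in \eqref{eq:Aappr_ah}, one gets
\begin{align*}
\biggabs{\int_B \mathcal{A}\nabla\bfw\cdot\nabla\bfw_\lambda\,dx} \;=\; \biggabs{\int_B \mathcal{A}\nabla\bfu\cdot\nabla\bfw_\lambda\,dx} \;\leq\; c\,\delta\,\lambda\int_{\widetilde B}\abs{\nabla\bfu}\,dx.
\end{align*}
Writing the left-hand side as $\int_B \mathcal{A}\nabla\bfw_\lambda\cdot\nabla\bfw_\lambda\,dx$ minus an error supported on $B\setminus G_\lambda$, and invoking the constant-coefficient G\aa{}rding inequality $\int_B \mathcal{A}\nabla\bfv\cdot\nabla\bfv\,dx \geq \kappaA\int_B\abs{\nabla\bfv}^2\,dx$ for $\bfv \in W^{1,2}_0(B)$ (Plancherel on the zero extension), one extracts an $L^2$-type estimate for $\nabla\bfw_\lambda$.

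The closing step transfers this $L^2$ gain into the Orlicz bound \eqref{eq:Aappr_est}. On the good set $\nabla\bfw = \nabla\bfw_\lambda$, so by the quasi-monotonicity of $\psi(t)/t^2$ up to $\Delta_2$-constants,
\begin{align*}
\int_{G_\lambda}\psi(\abs{\nabla\bfw})\,dx \;\leq\; c\,\frac{\psi(\lambda)}{\lambda^2}\,\|\nabla\bfw_\lambda\|_{L^2(B)}^2;
\end{align*}
the complementary integral $\int_{B\setminus G_\lambda}\psi(\abs{\nabla\bfw})\,dx$ is absorbed using the $L^\psi$ maximal-function bound applied to the Step~1 a priori estimate, with the parameter $s>1$ from the hypothesis supplying the extra decay in $\abs{B\setminus G_\lambda}$ needed in this absorption (an $L^s$-strong-type input on $M$ in place of the weak $L^1$-type bound). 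Balancing these contributions by choosing $\lambda$ as a large dyadic multiple of $\psi^{-1}\bigl(\dashint_{\widetilde B}\psi(\abs{\nabla\bfu})\bigr)$ produces a prefactor $\epsilon = \epsilon(\delta)$ that tends to zero with~$\delta$. The $\int_B \psi(\abs{\bfw}/r_B)\,dx$ part of \eqref{eq:Aappr_est} then follows from the Orlicz--Poincar\'e inequality of Theorem~\ref{thm:poincare}. The main obstacle is precisely this balancing: the right-hand side of \eqref{eq:Aappr_ah} scales \emph{linearly} in $\lambda$, while G\aa{}rding gives only \emph{quadratic} coercivity, and a bad-set error of full $\psi$-growth must be swallowed simultaneously; this forces the delicate calibration of $\lambda$ and explains the joint appearance of both an $L^1$ and an $L^s$ average of $\psi(\abs{\nabla\bfu})$ on the right-hand side of \eqref{eq:Aappr_est}.
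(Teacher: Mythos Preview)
Your strategy diverges from the paper's at a crucial point, and the divergence creates a genuine gap. You truncate the solution $\bfw$, test the equation with $\bfw_\lambda$, and extract coercivity via the G{\aa}rding inequality in $L^2$. The step that fails is the conversion back to $L^\psi$ on the good set: the inequality
\begin{align*}
\int_{G_\lambda}\psi(\abs{\nabla\bfw})\,dx \;\leq\; c\,\frac{\psi(\lambda)}{\lambda^2}\,\norm{\nabla\bfw_\lambda}_{L^2(B)}^2
\end{align*}
requires $t\mapsto\psi(t)/t^2$ to be almost increasing, i.e.\ $\psi$ essentially superquadratic. The hypothesis $\Delta_2(\psi,\psi^*)<\infty$ only ensures that $\psi$ is of type $(p_0,p_1)$ with $1<p_0\le p_1<\infty$ (Lemma~\ref{lem:typepq}); for $\psi(t)=t^p$ with $1<p<2$ the map $t\mapsto \psi(t)/t^2$ is \emph{decreasing}, and your displayed inequality fails by an unbounded factor as $\abs{\nabla\bfw}\to 0$ on $G_\lambda$. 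Since Theorem~\ref{thm:Aappr_phi} must cover the full range---it is later applied to the shifted functions $\phi_{\abs{\bfQ}}$ uniformly in $\bfQ$---this is not a technicality that can be patched by adjusting constants.

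The paper circumvents the $L^2$ detour entirely. It first proves a modular duality formula (Lemma~\ref{lem:varineqphi}),
\begin{align*}
\int_B\psi(\abs{\nabla\bfw})\,dx \;\sim\; \sup_{\bfxi\in C^\infty_0(B)}\bigg[\int_B\mathcal{A}\nabla\bfw\cdot\nabla\bfxi\,dx - \int_B\psi^*(\abs{\nabla\bfxi})\,dx\bigg],
\end{align*}
whose proof rests on the $L^{\psi^*}$ Calder\'on--Zygmund estimate for the adjoint system (Theorem~\ref{lem:DolMrho}). Since $\int_B\mathcal{A}\nabla\bfw\cdot\nabla\bfxi = \int_B\mathcal{A}\nabla\bfu\cdot\nabla\bfxi$ for $\bfxi\in C^\infty_0(B)$, one is reduced to estimating the latter pairing for each fixed $\bfxi$. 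Then one Lipschitz-truncates $\bfxi$ (not $\bfw$) with respect to $\psi^*$, at a level $\lambda\in[\gamma,2^{m_0}\gamma]$ tied to $\psi^*(\gamma)=\dashint_B\psi^*(\abs{\nabla\bfxi})\,dx$: the truncated piece is handled directly by~\eqref{eq:Aappr_ah} together with Young's inequality, and the bad-set remainder by H\"older with exponent~$s$ against $\psi(\abs{\nabla\bfu})$. No pointwise comparison between $\psi$ and a quadratic ever enters. This is what ``truncation on the test-function side'' ought to mean here---truncating an \emph{arbitrary} $\bfxi$ arising from duality, rather than truncating $\bfw$ and using it as its own test function.
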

The proof of this theorem can be found at the end of this section.
The distinction between $B$ and $\tilde{B}$ on the right-hand side
of~\eqref{eq:Aappr_est} allows a finer tuning with respect to the
exponents. If $B=\tilde{B}$, then only the term involving~$s$ is
needed.

The following result on the solvability and uniqueness in the setting
of classical Sobolev spaces $W^{1,q}_0(B, \RN)$ can be found
in~\cite[Lemma~2]{DolM95}.
\begin{lemma}
  \label{lem:DolM}
  Let $B \subset\subset \Omega$ be a ball, let $\mathcal{A}$ be
  strongly elliptic in the sense of Legendre-Hadamard and let $1 < q <
  \infty$. Then for every $\bfG \in L^q(B, \RNn)$, there exists a
  unique weak solution $\bfu=\TA \bfG \in W^{1,q}_0(B, \RN)$ of
  \begin{align}
    \label{eq:divAdivG}
    \begin{aligned}
      -\divergence(\mathcal{A} \nabla \bfu) &= -\divergence \bfG
      &\qquad&\text{on B},
      \\
      \bfu&= \bfzero &\qquad&\text{on $\partial B$}.
    \end{aligned}
  \end{align}
  The solution operator~$\TA$ is linear and satisfies
  \begin{align*}
    \begin{aligned}
      \norm{\nabla \TA \bfG}_{L^q(B)} &\leq c\, \norm{\bfG}_{L^q(B)},
    \end{aligned}
  \end{align*}
  where $c$ only depends on $n$, $N$,
  $\kappaA$, $\abs{\mathcal{A}}$ and $q$.

\end{lemma}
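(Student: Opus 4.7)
My plan is to prove Lemma~\ref{lem:DolM} in three stages, moving from the $L^2$ theory (where coercivity is available) up to $L^q$ via singular integral methods, since the Legendre--Hadamard condition does \emph{not} give direct coercivity on $W^{1,q}_0$ for $q\neq 2$.

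First I would handle the case $q=2$. Given $\bfw \in W^{1,2}_0(B,\RN)$, extend $\bfw$ by zero to all of $\Rn$ and apply the Plancherel identity to compute
\begin{align*}
  \int_B \mathcal{A}_{ij}^{\alpha\beta}\partial_\alpha w^i \partial_\beta w^j \,dx
  = \int_{\Rn} \mathcal{A}_{ij}^{\alpha\beta}\xi_\alpha\xi_\beta \widehat{w}^{\,i}(\xi)\overline{\widehat{w}^{\,j}(\xi)}\,d\xi
  \geq \kappaA \int_{\Rn} \abs{\xi}^2\abs{\widehat{\bfw}(\xi)}^2\,d\xi
  = \kappaA \int_B \abs{\nabla\bfw}^2\,dx,
\end{align*}
where the inequality applies the Legendre--Hadamard condition pointwise in $\xi$ to the rank-one tensor $\widehat{\bfw}(\xi)\otimes\xi$. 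Together with Poincaré on the bounded domain $B$, this yields a Gårding/coercivity estimate on $W^{1,2}_0(B)$, and the Lax--Milgram lemma produces a unique weak solution $\bfu = \TA\bfG \in W^{1,2}_0(B)$ of~\eqref{eq:divAdivG} satisfying $\norm{\nabla\bfu}_{L^2(B)} \leq c\,\norm{\bfG}_{L^2(B)}$, with $c$ depending only on $\kappaA$ and $\abs{\mathcal{A}}$.

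For the $L^q$ bound with $q \neq 2$, the plan is to exploit a fundamental solution and Calderón--Zygmund theory. Since the symbol $\mathcal{A}(\xi)_{ij} := \mathcal{A}_{ij}^{\alpha\beta}\xi_\alpha\xi_\beta$ is invertible for $\xi\neq 0$ (by Legendre--Hadamard, applied along each coordinate direction in $\RN$), there exists a matrix-valued fundamental solution $\Gamma(x)$ homogeneous of degree $2-n$ (logarithmic for $n=2$) whose second derivatives $\nabla^2\Gamma$ form a Calderón--Zygmund kernel: homogeneous of degree $-n$, smooth off the origin, and with vanishing average on spheres (this latter being automatic from the homogeneity of the associated Fourier multiplier $\mathcal{A}(\xi)^{-1}\xi_\alpha\xi_\beta$). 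Extending $\bfG$ by zero to $\Rn$ and setting $\bfu_0(x) = -(\nabla\Gamma \ast \bfG)(x)$ defines a solution of $-\divergence(\mathcal{A}\nabla\bfu_0) = -\divergence\bfG$ on $\Rn$ whose gradient is represented by a matrix of singular integral operators, giving $\norm{\nabla\bfu_0}_{L^q(\Rn)} \leq c\,\norm{\bfG}_{L^q(B)}$ for all $1<q<\infty$ via the Calderón--Zygmund theorem, with $c$ depending only on $n$, $N$, $\kappaA$, $\abs{\mathcal{A}}$ and~$q$.

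Finally, I would reduce to a homogeneous boundary-value problem: writing $\bfu = \bfu_0 + \bfu_1$, the corrector $\bfu_1$ satisfies $-\divergence(\mathcal{A}\nabla\bfu_1) = 0$ in $B$ with $\bfu_1 = -\bfu_0\!\mid_{\partial B}$; since $\bfu_0$ is smooth up to $\partial B$ away from the support of $\bfG$ (in fact the interior estimates for constant coefficient Legendre--Hadamard systems give full $W^{1,q}$ regularity up to smooth boundary, cf.\ Agmon--Douglis--Nirenberg), one obtains $\norm{\nabla\bfu_1}_{L^q(B)} \leq c\,\norm{\nabla\bfu_0}_{L^q(B)}$ and, by linearity and uniqueness (which follows from the $q=2$ case via $L^2\cap L^q$ approximation), the operator $\TA$ is well defined on $L^q$ with the stated estimate. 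The main obstacle I anticipate is the $L^q$ boundary regularity for $\bfu_1$ on the ball: this is where one loses the flexibility of working on $\Rn$, and the cleanest route is to use a scaling argument to reduce to the unit ball and then invoke the classical ADN-type estimate, whose constant depends only on $n$, $N$, $\kappaA$, $\abs{\mathcal{A}}$ and~$q$ because of the scaling invariance of~\eqref{eq:divAdivG}.
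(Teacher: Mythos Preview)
The paper does not give its own proof of this lemma; it simply quotes the result from~\cite[Lemma~2]{DolM95}. Your outline (G{\aa}rding via Plancherel for $q=2$, fundamental solution plus Calder{\'o}n--Zygmund for the whole-space estimate, then a boundary correction handled by Agmon--Douglis--Nirenberg) is essentially the classical route to this statement and is consistent with what the cited reference does, so there is nothing to compare against in the paper itself.

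Two points in your sketch deserve tightening. First, the remark that ``$\bfu_0$ is smooth up to $\partial B$ away from the support of $\bfG$'' is a red herring: $\bfG$ is supported in all of~$B$ and may be nonzero up to the boundary, so the only honest input for the corrector $\bfu_1$ is that $\bfu_0\in W^{1,q}(B)$, whence its trace lies in $W^{1-1/q,q}(\partial B)$. You therefore genuinely need the $L^q$ Dirichlet theory for the homogeneous constant-coefficient system with such boundary data; this is exactly the ADN estimate you invoke afterwards, so the argument survives, but the ``smoothness'' clause should be deleted. Second, uniqueness in $W^{1,q}_0$ for $q<2$ does not follow from the $L^2$ case by an ``$L^2\cap L^q$ approximation'' as stated: an $L^q$-limit of $L^2$-solutions need not itself lie in $L^2$. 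The clean argument is by duality: once the $L^{q'}$ existence/estimate is established (note $q'>2$, where uniqueness is immediate from the $L^2$ theory), an $\calA$-harmonic $\bfu\in W^{1,q}_0$ can be tested against the $W^{1,q'}_0$-solution of $-\divergence(\overline{\calA}\,\nabla\bfv)=-\divergence\bfH$ for arbitrary $\bfH\in L^{q'}$, giving $\int_B \nabla\bfu\cdot\bfH\,dx=0$ and hence $\nabla\bfu=\bfzero$.
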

\begin{remark}
 Note
  that our constants do not depend on the size of the ball, since the estimates involved
  are scaling invariant.
\end{remark}
Let $\TA$ be the solution operator of Lemma~\ref{lem:DolM}. Then by the
uniqueness of Lemma~\ref{lem:DolM}, the operator $\TA \,:\, L^q(B, \RNn)
\to W^{1,q}_0(B, \RN)$ does not depend on the choice of $q \in
(1,\infty)$. Therefore, $\TA$ is uniquely defined from $\bigcup_{1<q<
  \infty} L^q(B, \RNn)$ to $\bigcup_{1<q< \infty} W^{1,q}_0(B, \RN)$

We need to extend Lemma~\ref{lem:DolM} to the setting of Orlicz
spaces. We will do so by means of the following real interpolation
theorem of Peetre~\cite[Theorem~5.1]{Pee70} which 
states, that whenever $\psi$ is of the form~(\ref{quasiconcave}), then
$L^\psi$ is an interpolation space between $L^{p_0}$ and $L^{p_1}$.

\begin{theorem}
  \label{thm:peetre}
  Let $\psi$ be an N-function with $\Delta_2 (\psi, \psi^*)$
  and $p_0$, $p_1$ as in Lemma \ref{lem:typepq}. Moreover let $S$ be a
  linear, bounded operator from $L^{p_j} \to L^{p_j}$ for $j=0,1$.
  Then there exists $K_2$, which only depends on $\Delta_2(\psi,\psi^*)$,
   and the operator norms of $S$ such that
  \begin{align*}
    \norm{Sf}_\psi &\leq K_2 \norm{f}_\psi
    \\
    \int \psi(\abs{Sf}/K_2) \,d\mu &\leq \int \psi(\abs{f})\,d\mu
  \end{align*}
  for every $f \in L^\psi$.
\end{theorem}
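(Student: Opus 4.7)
The plan is to follow a Marcinkiewicz-style truncation argument, exploiting that $\psi$ has type $(p_0,p_1)$ via Lemma~\ref{lem:typepq}. From that lemma we obtain $1<p_0<p_1<\infty$ depending only on $\Delta_2(\psi,\psi^*)$, together with the quasi-monotonicity properties that $\psi(t)/t^{p_0}$ is (essentially) non-decreasing and $\psi(t)/t^{p_1}$ is (essentially) non-increasing in $t$, with implicit constants controlled by $\Delta_2(\psi,\psi^*)$.

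First, for every threshold $\lambda>0$ I would decompose $f = g_\lambda + b_\lambda$ with $g_\lambda := f\chi_{\{|f|\le\lambda\}} \in L^{p_1}$ and $b_\lambda := f\chi_{\{|f|>\lambda\}} \in L^{p_0}$. Linearity of $S$ gives $Sf = Sg_\lambda + Sb_\lambda$, hence $\{|Sf|>2\lambda\} \subset \{|Sg_\lambda|>\lambda\} \cup \{|Sb_\lambda|>\lambda\}$. Chebyshev combined with the $L^{p_j}$-continuity of $S$ then yields
\begin{align*}
  \mu_{Sf}(2\lambda) \le \frac{M_1^{p_1}}{\lambda^{p_1}}\int_{\{|f|\le\lambda\}} |f|^{p_1}\,d\mu + \frac{M_0^{p_0}}{\lambda^{p_0}}\int_{\{|f|>\lambda\}}|f|^{p_0}\,d\mu,
\end{align*}
where $M_j$ denotes the operator norm of $S$ on $L^{p_j}$ and $\mu_h(\lambda):=\mu(\{|h|>\lambda\})$.

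Next I would invoke the layer-cake identity $\int\psi(|Sf|/2)\,d\mu \sim \int_0^\infty \psi'(\lambda)\,\mu_{Sf}(2\lambda)\,d\lambda$, insert the above bound, and swap integrals by Fubini. The two resulting inner integrals $\int_{|f|}^\infty \psi'(\lambda)\lambda^{-p_1}\,d\lambda$ and $\int_0^{|f|}\psi'(\lambda)\lambda^{-p_0}\,d\lambda$ are each dominated by $C\,\psi(|f|)/|f|^{p_j}$: indeed $\psi'(\lambda)\lambda \sim \psi(\lambda)$ by~\eqref{ineq:phiast_phi_p}, and a Hardy-type computation using the quasi-monotonicity of $\psi(\lambda)/\lambda^{p_j}$ (together with $1<p_0<p_1<\infty$) controls both tails. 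Summation gives $\int\psi(|Sf|/2)\,d\mu \le C\int\psi(|f|)\,d\mu$ with $C=C(\Delta_2(\psi,\psi^*),M_0,M_1)$. A final rescaling $Sf \mapsto Sf/K_2$, with $K_2\ge 2$ chosen so that the constant $C$ is absorbed via $\psi(t/K_2)\le K_2^{-p_0}C_1\,\psi(t)$ (Lemma~\ref{lem:typepq}), yields the modular inequality, from which the Luxemburg-norm inequality is immediate.

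The main obstacle is the careful bookkeeping of constants so that $K_2$ depends \emph{only} on $\Delta_2(\psi,\psi^*)$ and the operator norms: one must systematically use the quasi-monotonicity from Lemma~\ref{lem:typepq} rather than any specific property of $\psi$. The convergence of the Hardy-type inner integrals at $0$ and $\infty$ is precisely what is secured by the strict inequalities $p_0>1$ and $p_1<\infty$, which in turn rely on $\Delta_2(\psi,\psi^*)<\infty$; without the $\Delta_2$ hypothesis on both $\psi$ and its conjugate, the scheme would collapse.
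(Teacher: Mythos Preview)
The paper does not supply its own proof here; it quotes the result as Peetre's real interpolation theorem \cite[Theorem~5.1]{Pee70}. Peetre's route goes through the quasi-concave representation $\psi(t)=t^{p_0}h(t)^{p_1-p_0}$ of Lemma~\ref{lem:typepq} and identifies $L^\psi$ as a real interpolation space between $L^{p_0}$ and $L^{p_1}$ via the $K$-functional, after which boundedness of $S$ on $L^\psi$ is immediate from abstract interpolation. Your Marcinkiewicz-style argument (layer-cake plus truncation at level $\lambda$) is a genuinely different, more elementary path that avoids the interpolation machinery entirely; it has the advantage of being self-contained, while Peetre's approach is cleaner once one accepts the $K$-method.

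There is, however, a real gap in your Hardy step. You assert that the quasi-monotonicity of $\psi(\lambda)/\lambda^{p_j}$ coming from the type $(p_0,p_1)$ condition is enough to bound $\int_a^\infty \psi'(\lambda)\lambda^{-p_1}\,d\lambda$ and $\int_0^a \psi'(\lambda)\lambda^{-p_0}\,d\lambda$ by $C\,\psi(a)a^{-p_j}$. This fails at the endpoints: take $\psi(t)=t^{p_1}$, which certainly has type $(p_0,p_1)$, and the first integral equals $p_1\int_a^\infty \lambda^{-1}\,d\lambda=\infty$; likewise $\psi(t)=t^{p_0}$ breaks the second. What you actually need is strict slack, namely that $\psi$ has type $(q_0,q_1)$ for some $p_0<q_0\le q_1<p_1$; then e.g.\ $\psi(\lambda)\le C(\lambda/a)^{q_1}\psi(a)$ for $\lambda\ge a$ yields a convergent tail $\int_a^\infty \lambda^{q_1-p_1-1}\,d\lambda$. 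Such slack is available---the proof of Lemma~\ref{lem:typepq} shows one may take any $p_1>\log_2\Delta_2(\psi)$ and similarly for $p_0$---and since the theorem only says ``$p_0,p_1$ as in Lemma~\ref{lem:typepq}'' you are free to choose them this way. But this is an extra choice, not a consequence of quasi-monotonicity; once you make it explicit, your argument goes through with constants depending only on $\Delta_2(\psi,\psi^*)$ and the operator norms, as required.
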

This interpolation result and Lemma~\ref{lem:DolM} immediately imply:
\begin{theorem}
  \label{lem:DolMrho}
  Let $B \subset \Omega$ be a ball, let $\mathcal{A}$ be strongly
  elliptic in the sense of Legendre-Hadamard and $\psi$ be an
  N-function with $ \Delta_2 (\psi,\psi^*)$. Then the solution
  operator $\TA$ of Lemma~\ref{lem:DolM} is continuous from $L^\psi(B,
  \RNn)$ to $W^{1,\psi}_0(B, \Rn)$ and
  \begin{align}
    \label{eq:Testphi}
    \begin{aligned}
      \norm{\nabla \TA \bfG}_{L^\psi(B)} &\leq c\,
      \norm{\bfG}_{L^\psi(B)},
      \\
      \int_B \psi(\abs{\nabla \TA \bfG})\,dx &\leq c \int_B
      \psi(\abs{\bfG})\,dx,
    \end{aligned}
  \end{align}
  for all $\bfG \in L^\psi(B,\RNn)$, where $c$ only depends on $n$,
  $N$, $\kappaA$, $\abs{\mathcal{A}}$, $\Delta_2(\psi,\psi^*)$.
\end{theorem}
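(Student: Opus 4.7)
My plan is to deduce this theorem by interpolating the $L^{q}$--estimates of Lemma~\ref{lem:DolM} between two exponents chosen by Lemma~\ref{lem:typepq} and then applying Peetre's theorem (Theorem~\ref{thm:peetre}) to the linear solution operator $S\bfG := \nabla \TA \bfG$. Concretely, I first fix $1<p_0<p_1<\infty$ and $C_1>0$ according to Lemma~\ref{lem:typepq}, depending only on $\Delta_2(\psi,\psi^*)$, so that $\psi$ admits the representation~\eqref{quasiconcave} required to trigger Peetre's interpolation.

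Next I would verify that $S$ is well defined on $L^{\psi}(B,\RNn)$ and coincides with the operator furnished by Lemma~\ref{lem:DolM}. Using~\eqref{eq:typepq} I would argue that on the bounded domain $B$ one has the inclusion $L^{\psi}(B)\hookrightarrow L^{p_0}(B)$: setting $t=1$ in~\eqref{eq:typepq} yields $\psi(s)\gtrsim \min\{s^{p_0},s^{p_1}\}$, whence $|f|^{p_0}\le 1+c\,\psi(|f|)$. Therefore $\TA\bfG$ is defined by Lemma~\ref{lem:DolM} at exponent~$p_0$, and the uniqueness statement recorded right after that lemma shows that the definition of~$\TA$ is independent of the exponent; hence $S\colon L^{p_j}(B,\RNn)\to L^{p_j}(B,\RNn)$ is a bounded linear map for both $j=0$ and $j=1$, with operator norms depending only on $n,N,\kappaA,|\mathcal{A}|$ and $p_j$.

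Having these two endpoint bounds, I would invoke Theorem~\ref{thm:peetre} with the operator $S$: it produces a constant $K_2$, depending only on $\Delta_2(\psi,\psi^*)$, $n$, $N$, $\kappaA$ and $|\mathcal{A}|$, such that
\begin{align*}
  \norm{S\bfG}_{\psi} &\le K_2\,\norm{\bfG}_{\psi},\\
  \int_B \psi\bigl(|S\bfG|/K_2\bigr)\,dx &\le \int_B \psi(|\bfG|)\,dx
\end{align*}
for every $\bfG\in L^{\psi}(B,\RNn)$. The first inequality is exactly the norm bound in~\eqref{eq:Testphi}. The modular bound in~\eqref{eq:Testphi} then follows from the second inequality by rescaling and using the $\Delta_2$--condition: since $\psi(K_2 t)\le c\,\psi(t)$ with $c$ depending only on $\Delta_2(\psi)$ and $K_2$, integrating yields $\int_B \psi(|\nabla\TA\bfG|)\,dx\le c\int_B \psi(|\bfG|)\,dx$, which is the second estimate of~\eqref{eq:Testphi}.

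The only step that requires mild care is the well-definedness and the independence of the exponent of the operator~$\TA$ on $L^{\psi}(B)$; once that is settled, Peetre's theorem does the rest mechanically. No contradiction, compactness or duality argument is needed, and all constants track explicitly through $\Delta_2(\psi,\psi^*)$, $n$, $N$, $\kappaA$, $|\mathcal{A}|$, as claimed.
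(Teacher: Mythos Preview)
Your proposal is correct and follows exactly the approach the paper itself takes: the paper simply states that Theorem~\ref{lem:DolMrho} is an immediate consequence of Peetre's interpolation theorem (Theorem~\ref{thm:peetre}) together with the $L^q$-bounds of Lemma~\ref{lem:DolM}, and you have spelled out precisely how that deduction goes, including the choice of endpoints via Lemma~\ref{lem:typepq}, the well-definedness of $\TA$ on $L^{\psi}$, and the passage from the scaled modular inequality to the unscaled one via $\Delta_2$.
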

\begin{remark}
  \label{rem:DolMuniquephi}
  Since $\psi$ satisfies~\eqref{eq:typepq} for some $1<p_0<p_1 <
  \infty$ it follows easily that $L^\psi(B) \embedding L^{p_0}(B)$ for
  every ball $B \subset \Omega$. From this and the uniqueness in
  Lemma~\ref{lem:DolM}, the solution of~\eqref{eq:divAdivG} is also
  unique in $W^{1,\psi}_0(B, \RN)$.
\end{remark}
Since $\mathcal{A}$ is only strongly elliptic in the sense of
Legendre-Hadamard, we will not be able to apply the Lipschitz
truncation technique directly to our almost $\mathcal{A}$-harmonic
function $\bfu$. Instead, we need to use duality and apply the
Lipschitz truncation technique to the test functions. For this reason,
we prove the following variational inequality.
\begin{lemma}
  \label{lem:varineqphi}
  Let $B \subset \Omega$ be a ball and let $\mathcal{A}$ be strongly
  elliptic in the sense of Legendre-Hadamard.  Then it holds for all
  $\bfu \in W^{1, \psi}_0(B)$ that
  \begin{subequations}
    \begin{align}
      \label{eq:varest-norm}
      \norm{\nabla \bfu}_{\psi} &\sim \sup_{\substack{\bfxi \in
          C^\infty_0(B)\\ \norm{\nabla \bfxi}_{\psi^*} \leq 1}}
      \int_B \mathcal{A}\nabla \bfu \cdot \nabla \bfxi\,dx,
      \\
      \label{eq:varest-mod}
      \int_B \psi(\abs{\nabla \bfu})\,dx &\sim \sup_{\bfxi \in
        C^\infty_0(B)} \bigg[ \int_B \mathcal{A}\nabla \bfu \cdot
      \nabla \bfxi\,dx - \int_B \psi^*(\abs{\nabla \bfxi})\,dx \bigg].
    \end{align}
  \end{subequations}
  The implicit constants only depend on $n$, $N$, $\kappaA$,
  $\abs{\mathcal{A}}$, $\Delta_2(\psi,\psi^*)$.
\end{lemma}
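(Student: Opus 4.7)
The overall strategy is duality. The $\gtrsim$ direction of each equivalence (i.e.\ the upper bound on the supremum) is elementary: for~\eqref{eq:varest-norm} it is Orlicz--H\"older's inequality, while for~\eqref{eq:varest-mod} Young's inequality~\eqref{eq:young} together with $\Delta_2(\psi)<\infty$ yields $\int_B \mathcal{A}\nabla \bfu \cdot \nabla \bfxi\,dx \leq c\int_B \psi(\abs{\nabla \bfu})\,dx + \int_B \psi^*(\abs{\nabla \bfxi})\,dx$.

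For the reverse inequalities, let $\mathcal{A}^T$ denote the transposed tensor $(\mathcal{A}^T)_{ij}^{\alpha\beta}:=\mathcal{A}_{ji}^{\beta\alpha}$. A direct check shows that $\mathcal{A}^T$ is again Legendre--Hadamard elliptic with the same constant~$\kappaA$, so Theorem~\ref{lem:DolMrho} supplies a bounded solution operator $T^*=T_{\mathcal{A}^T}\,:\,L^{\psi^*}(B,\RNn)\to W^{1,\psi^*}_0(B,\RN)$. Combining the pointwise identity $\mathcal{A}A\cdot B = A\cdot \mathcal{A}^T B$ with the weak formulation for $\bfzeta:=T^*\bfG$ tested against $\bfu\in W^{1,\psi}_0(B)$ (which is legitimate by density of $C^\infty_0$ in $W^{1,\psi}_0$ under $\Delta_2$) yields the key identity
\begin{align*}
  \int_B \mathcal{A}\nabla \bfu \cdot \nabla \bfzeta\,dx = \int_B \nabla \bfu \cdot \bfG\,dx.
\end{align*}

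For~\eqref{eq:varest-norm}, I select $\bfG\in L^{\psi^*}$ with $\norm{\bfG}_{\psi^*}\leq 1$ and $\int_B \nabla\bfu\cdot\bfG\,dx\gtrsim\norm{\nabla\bfu}_\psi$, which is provided by Orlicz duality (valid since $\psi,\psi^*\in\Delta_2$). Then Theorem~\ref{lem:DolMrho} gives $\norm{\nabla\bfzeta}_{\psi^*}\lesssim 1$, so rescaling and approximating $\bfzeta$ by some $\bfxi\in C^\infty_0(B)$ in the $W^{1,\psi^*}$-topology (density again) delivers the desired lower bound on the supremum.

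For~\eqref{eq:varest-mod} I take the natural dual field $\bfG:=\psi'(\abs{\nabla\bfu})\,\nabla\bfu/\abs{\nabla\bfu}$ (extended by zero), so that~\eqref{ineq:phiast_phi_p} gives $\int_B\nabla\bfu\cdot\bfG\,dx \sim \int_B \psi(\abs{\nabla\bfu})\,dx \sim \int_B\psi^*(\abs{\bfG})\,dx$. The modular bound in Theorem~\ref{lem:DolMrho} yields $\int_B \psi^*(\abs{\nabla\bfzeta})\,dx\leq c_*\int_B\psi(\abs{\nabla\bfu})\,dx$, but the constant $c_*$ may be much larger than one; this is the main subtlety, since testing directly with $\bfzeta$ need not give a positive residue. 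To overcome it, I test with $\bfxi:=\lambda\bfzeta$ for a small $\lambda\in(0,1)$ and invoke Lemma~\ref{lem:typepq} applied to $\psi^*$, which yields an exponent $p_0^*>1$ with $\psi^*(\lambda t)\leq c\,\lambda^{p_0^*}\psi^*(t)$ for $\lambda\leq 1$. Then
\begin{align*}
  \int_B \mathcal{A}\nabla\bfu\cdot\nabla\bfxi\,dx - \int_B \psi^*(\abs{\nabla\bfxi})\,dx \geq \bigl(c_1\lambda - c_2\lambda^{p_0^*}\bigr)\int_B \psi(\abs{\nabla\bfu})\,dx,
\end{align*}
and choosing $\lambda$ small enough (depending only on $\kappaA$, $\abs{\mathcal{A}}$ and the characteristics) makes the bracketed quantity a fixed positive multiple of $\lambda$. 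A final density step $\bfzeta\rightsquigarrow\bfxi\in C^\infty_0(B)$, using $\Delta_2(\psi^*)$ to pass modulars to the limit, completes the proof.
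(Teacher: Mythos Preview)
Your proof is correct and follows essentially the same duality strategy as the paper: both use the adjoint solution operator $T_{\mathcal{A}^T}$ (the paper writes $\TAbar$) together with Theorem~\ref{lem:DolMrho} to convert $\int_B \nabla\bfu\cdot\bfG$ into $\int_B \mathcal{A}\nabla\bfu\cdot\nabla\bfzeta$, and both choose $\bfG=\psi'(\abs{\nabla\bfu})\nabla\bfu/\abs{\nabla\bfu}$ for the modular estimate. The only cosmetic difference is in absorbing the constant $c_*$ in~\eqref{eq:varest-mod}: the paper rescales $\bfu$ and invokes $\Delta_2(\psi)$ directly, whereas you rescale the test function $\bfxi=\lambda\bfzeta$ and exploit the power-type bound $\psi^*(\lambda t)\lesssim\lambda^{p_0^*}\psi^*(t)$ from Lemma~\ref{lem:typepq}; both manoeuvres are equivalent and depend only on $\Delta_2(\psi,\psi^*)$.
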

\begin{proof}
  We begin with the proof of~\eqref{eq:varest-norm}.  The $\gtrsim$
  estimate is a simple consequence of H{\"o}lder's inequality, so let us
  concentrate on $\lesssim$. Since $(L^\psi)^* \cong L^{(\psi^*)}$
  (with constants bounded by~$2$) and $C^\infty_0(B)$ is dense in
  $L^{(\psi^*)}(\Omega)$, we have
  \begin{align*}
    \norm{\nabla \bfu}_{\psi} &\leq 2\, \sup_{\substack{\bfH \in
        C^\infty_0(B, \RNn)\\ \norm{\bfH}_{\psi^*} \leq 1}} \int_B
    \nabla \bfu \cdot \bfH \,dx,
  \end{align*}
  Define $\overline{\mathcal{A}}$ by
  $\overline{\mathcal{A}}_{ij}^{\alpha\beta} :=
  \mathcal{A}_{ji}^{\beta\alpha}$, then
  $-\divergence(\overline{\mathcal{A}} \nabla \bfu)$ is the formal
  adjoint operator of  $-\divergence({\mathcal{A}} \nabla \bfu).$  In particular, using~\eqref{eq:divAdivG}
  \begin{align}
    \label{eq:useTAbar}
    \begin{aligned}
      \int_B \nabla \bfu \cdot \bfH & = \int_B \nabla \bfu \cdot \overline{\mathcal{A}} \nabla
      \TAbar \bfH\,dx
      \\
      &= \int_B \mathcal{A} \nabla \bfu \cdot \nabla \TAbar \bfH\,dx.
    \end{aligned}
  \end{align}
  Hence,
  \begin{align*}
    \norm{\nabla \bfu}_{\psi} &\leq 2\, \sup_{\substack{\bfH \in
        C^\infty_0(B, \RNn)\\ \norm{\bfH}_{\psi^*} \leq 1}} \int_B
    \mathcal{A} \nabla \bfu \cdot \nabla \TAbar \bfH\,dx
    \\
    &\leq 4\, \sup_{\substack{\bfH \in C^\infty_0(B, \RNn)\\
        \norm{\bfH}_{\psi^*} \leq 1}} \, \norm{\mathcal{A}\nabla
      \bfu}_{L^\psi(B)} \norm{\nabla \TAbar \bfH}_{\psi^*}.
    \\
    &\leq c\, \, \norm{\mathcal{A}\nabla \bfu}_{L^\psi(B)},
  \end{align*}
  where we used in the last step Theorem~\ref{lem:DolMrho} for $\TAbar$
  and $\psi^*$. This proves~\eqref{eq:varest-norm}.

  Let us now prove~\eqref{eq:varest-mod}. The estimate $\gtrsim$ just
  follows from
  \begin{align*}
    \int_B \mathcal{A}\nabla \bfu \cdot \nabla \bfxi\,dx - \int_B
    \psi^*(\abs{\nabla \bfxi})\,dx &\leq \int_B \psi(\abs{\mathcal{A}}
    \abs{\nabla \bfu})\,dx
    \\
    &\leq c(\abs{\mathcal{A}}) \int_B \psi(\abs{\nabla \bfu})\,dx,
  \end{align*}
  where we used  $\abs{\mathcal{A} \nabla \bfu \cdot \nabla \bfxi} \leq
  \abs{\mathcal{A}} \abs{\nabla \bfu} \abs{\nabla \bfxi}$, Young's
  inequality and $\psi \in \Delta_2$.

  We turn to $\lesssim$ of~\eqref{eq:varest-mod}. Recall that
  \begin{align*}
    \psi^{**}(t) = \psi(t) = \sup_{u \geq 0} \big(ut - \psi^*(u)\big),
  \end{align*}
  where the supremum is attained at $u = \psi'(t)$.  Thus the choice
  $\bfH := \psi'(\abs{\nabla \bfu}) \frac{\nabla \bfu}{\abs{\nabla
      \bfu}}$ (with $\bfH=\bfzero$ where $\nabla \bfu=\bfzero$)
  implies
  \begin{align*}
    \int_B \psi(\abs{\nabla \bfu})\,dx &\leq \sup_{\bfH \in
      (L^{\psi^*}(B, \RNn))} \bigg[ \int_B \nabla \bfu
    \cdot \bfH\,dx - \int_B \psi^*(\abs{\bfH})\,dx \bigg].
  \end{align*}
  Using $\TAbar$ we estimate with~\eqref{eq:useTAbar}
  \begin{align*}
    \int_B \psi(\abs{\nabla \bfu})\,dx &\leq \sup_{\bfH \in
      L^{\psi^*}(B,\RNn)} \bigg[ \int_B \mathcal{A} \nabla
    \bfu \cdot \nabla \TAbar \bfH \,dx - \int_B
    \psi^*(\abs{\bfH})\,dx \bigg].
  \end{align*}
  By Theorem~\ref{lem:DolMrho} there exists $c \geq 1$ such that
  \begin{align*}
    \int_B
    \psi^*(\abs{\nabla \TA \bfH})\,dx \leq c\, \int_B
    \psi^*(\abs{\bfH})\,dx.
  \end{align*}
  This proves the following:
  \begin{align*}
    \int_B \psi(\abs{\nabla \bfu})\,dx &\leq \sup_{\bfH \in
      (L^{\psi^*}(B, \RNn)} \bigg[ \int_B \mathcal{A} \nabla \bfu
    \cdot \nabla \TAbar \bfH \,dx -c \int_B
    \psi^*(\abs{\nabla \TAbar \bfH})\,dx \bigg]
    \\
    &\leq \sup_{\bfxi \in L^{\psi^*}(B,\RN)} \bigg[ \int_B \mathcal{A}
    \nabla \bfu \cdot \nabla \bfxi \,dx - c \int_B
    \psi^*(\abs{\nabla \bfxi})\,dx \bigg].
  \end{align*}
  We replace $\bfu$ by
  $c \bfu$ to get
  \begin{align*}
    \int_B \psi\bigg(c \abs{\nabla \bfu}\bigg)\,dx &\leq
   c \sup_{\bfxi \in L^{\psi^*}(B,\RN)} \bigg[ \int_B
    \mathcal{A} \nabla \bfu \cdot \nabla \bfxi \,dx - \int_B
    \psi^*(\abs{\nabla \bfxi})\,dx \bigg].
  \end{align*}
  Now the claim follows using $\psi \in \Delta_2$ on the left-hand
  side and the density of $C^\infty_0(B, \RN)$ in $L^{\psi^*}(B, \RN)$
  (using $\psi^* \in \Delta_2$).
\end{proof}
Moreover, we need the following result
of~\cite[Theorem~3.3]{DieStrVer10} about {\em Lipschitz truncations}
in Orlicz spaces.
\begin{theorem}[Lipschitz truncation]
  \label{thm:Liptrun}
  Let $B \subset \Omega$ be a ball and let $\psi$ be an N-function with
  $\Delta_2 (\psi,\psi^*)<\infty$.  If $\bfw \in W^{1,\psi}_0(B,
  \RN)$, then for every $m_0 \in \setN$ and $\gamma>0$ there exists
  $\lambda \in [\gamma, 2^{m_0} \gamma]$ and $\bfw_\lambda \in
  W^{1,\infty}_0(B, \RN)$ (called the Lipschitz truncation) such that
  \begin{align*}
    \norm{\nabla \bfw_\lambda}_\infty &\leq c\, \lambda,
    \\
    \dashint_B \psi\big(\abs{\nabla \bfw_\lambda}
    \chi_{\set{\bfw_\lambda \not= \bfw}}\big) \,dx &\leq c\,
    \psi(\lambda) \frac{\abs{\set{\bfw_\lambda \not= \bfw}}}{\abs{B}}
    \leq \frac{c}{m_0} \dashint_B \psi(\abs{\nabla \bfw})\,dx
    \\
    \dashint_B \psi\big(\abs{\nabla \bfw_\lambda}\big) \,dx &\leq c\,
    \dashint_B \psi\big(\abs{\nabla \bfw}\big) \,dx.
  \end{align*}
  The constant $c$ depends only on $\Delta_2(\psi,\psi^*)$,
  $n$ and $N$.
\end{theorem}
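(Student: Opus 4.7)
The plan is to adapt the classical Acerbi--Fusco Lipschitz truncation construction to the Orlicz setting, exploiting that $\Delta_2(\psi,\psi^*)<\infty$ forces the Hardy--Littlewood maximal operator~$M$ to be bounded on~$L^\psi(\Rn)$. First I would extend $\bfw$ by zero outside~$B$ so that $\bfw \in W^{1,\psi}(\Rn,\RN)$; the $L^\psi$-boundedness of~$M$ then gives
\begin{align*}
  \int_{\Rn} \psi\bigl(M(\abs{\nabla \bfw})\bigr)\,dx \leq c \int_B \psi(\abs{\nabla \bfw})\,dx.
\end{align*}

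Next, I would select~$\lambda$ via a pigeonhole argument on dyadic levels. Writing $U_\mu := \set{M(\abs{\nabla \bfw}) > \mu}$, the layer-cake formula together with $\psi\in \Delta_2$ yields
\begin{align*}
  \sum_{j=1}^{m_0} \psi(2^j\gamma)\, \abs{U_{2^j\gamma}} \leq c\int_{\Rn} \psi\bigl(M(\abs{\nabla \bfw})\bigr)\,dx \leq c\int_B \psi(\abs{\nabla \bfw})\,dx,
\end{align*}
so some $j \in \set{1,\dots,m_0}$ produces $\lambda := 2^j\gamma \in [\gamma, 2^{m_0}\gamma]$ with $\psi(\lambda)\abs{U_\lambda} \leq (c/m_0)\int_B \psi(\abs{\nabla \bfw})\,dx$.

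For this~$\lambda$ I would perform the standard Whitney truncation on the open set~$U_\lambda$: take a Whitney cover~$\set{Q_i}$ of~$U_\lambda$ with an associated smooth partition of unity $\set{\rho_i}$ and define $\bfw_\lambda := \bfw$ on $\Rn\setminus U_\lambda$ and $\bfw_\lambda := \sum_i \rho_i \, \mean{\bfw}_{Q_i^*}$ on~$U_\lambda$, where $Q_i^*$ is a suitably enlarged cube. Since $\bfw$ vanishes on~$\Rn\setminus B$, Whitney cubes meeting a neighborhood of~$\partial B$ have vanishing averages, so $\bfw_\lambda \in W^{1,\infty}_0(B,\RN)$; standard pointwise estimates on~$Q_i^*$ (using $M(\abs{\nabla \bfw}) \leq \lambda$ on the good set together with Poincar{\'e}) give $\norm{\nabla \bfw_\lambda}_\infty \leq c\lambda$ and $\set{\bfw_\lambda \neq \bfw} \subset U_\lambda$.

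The three stated estimates then follow at once. The $L^\infty$-bound is by construction; in the middle line the first inequality uses $\set{\bfw_\lambda \neq \bfw} \subset U_\lambda$, the $L^\infty$-bound on~$\nabla \bfw_\lambda$, and $\psi\in\Delta_2$, while the second is precisely the pigeonhole selection of~$\lambda$; the last inequality splits the integral over $\set{\bfw_\lambda = \bfw}$ and~$U_\lambda$, combining the identity of the gradients on the good set with $\psi(\abs{\nabla \bfw_\lambda})\leq c\psi(\lambda)$ on~$U_\lambda$ and the pigeonhole bound. The main obstacle is a clean bookkeeping of the Orlicz constants: the boundedness of~$M$ on~$L^\psi$, the $\Delta_2$-doubling in the dyadic sum, and the Poincar{\'e} inequality on the enlarged Whitney cubes must all be traced to depend only on $\Delta_2(\psi,\psi^*)$, $n$, and~$N$; moreover the Whitney decomposition has to be performed with respect to~$\Rn\setminus U_\lambda$ rather than $B\setminus U_\lambda$ so that both $\bfw_\lambda\in W^{1,\infty}_0(B)$ and the Lipschitz estimate hold uniformly up to~$\partial B$.
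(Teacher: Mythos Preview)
The paper does not actually prove this theorem; it is quoted from \cite[Theorem~3.3]{DieStrVer10} (with the underlying construction in \cite{DieMS08}), so there is no in-paper proof to compare against. Your outline---extend by zero, use the $L^\psi$-boundedness of the maximal operator, pigeonhole over dyadic levels $2^j\gamma$ to select~$\lambda$, then Whitney-smooth on the bad set $U_\lambda=\set{M(\abs{\nabla\bfw})>\lambda}$---is precisely the strategy of those references, and the derivation of the three displayed estimates from $\norm{\nabla\bfw_\lambda}_\infty\le c\lambda$, $\set{\bfw_\lambda\neq\bfw}\subset U_\lambda$, and the pigeonhole bound is correct.

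One point deserves more care than you give it: the assertion that ``Whitney cubes meeting a neighborhood of~$\partial B$ have vanishing averages'' is not true as stated---a Whitney cube $Q_i\subset U_\lambda$ sitting just inside~$B$ can have $Q_i^*\subset B$, so $\mean{\bfw}_{Q_i^*}$ need not vanish, and hence $\bfw_\lambda$ need not have support in~$B$. The cited references handle this by enlarging the bad set to something like $\set{M(\abs{\nabla\bfw})>\lambda}\cup\set{M(\abs{\bfw})/r_B>\lambda}$ (or an equivalent device involving the distance to~$\partial B$), which forces a layer near~$\partial B$ into the good set where $\bfw_\lambda=\bfw=0$; the pigeonhole and the gradient bound go through unchanged because both pieces are controlled by $\int_B\psi(\abs{\nabla\bfw})$ via Poincar\'e. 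With that adjustment your sketch is complete and matches the source.
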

\noindent
We are ready to prove  Theorem~\ref{thm:Aappr_phi}.
\begin{proof}[Proof of Theorem~\ref{thm:Aappr_phi}.]
  We begin with an application of Lemma~\ref{lem:varineqphi}:
  \begin{align}
    \label{eq:Liptrunvi}
    \dashint_B \psi(\abs{\nabla \bfu})\,dx \leq c\, \sup_{\bfxi \in
      C^\infty_0(B,\RN)} \bigg[ \dashint_B \mathcal{A}\nabla \bfu \cdot
    \nabla \bfxi\,dx - \dashint_B \psi^*(\abs{\nabla \bfxi})\,dx \bigg].
  \end{align}
 In the following let us fix $\bfxi \in C^\infty_0(B)$. Choose
  $\gamma\geq 0$ such that
  \begin{align}
    \label{eq:choicegamma}
    \psi^*(\gamma) &= \dashint_B \psi^*(\abs{\nabla \bfxi}) \,dx.
  \end{align}
  and let $m_0 \in \setN$. Due to Theorem~\ref{thm:Liptrun} applied
  to $\psi^*$ we find $\lambda \in [\gamma, 2^{m_0} \gamma]$ and
  $\bfxi_\lambda \in W^{1,\infty}_0(B)$ such that
  \begin{align}
    \label{eq:xi1}
    \norm{\nabla \bfxi_\lambda}_\infty &\leq c\, \lambda,
    \\
    \label{eq:xi2}
    \psi^*(\lambda) \frac{\abs{\set{\bfxi_\lambda \not=
          \bfxi}}}{\abs{B}} &\leq \frac{c}{m_0} \dashint_B
    \psi^*(\abs{\nabla \bfxi})\,dx
    \\
    \label{eq:xi3}
    \dashint_B \psi^*\big(\abs{\nabla \bfxi_\lambda}\big) \,dx &\leq c
    \dashint_B \psi^*\big(\abs{\nabla \bfxi}\big) \,dx.
  \end{align}
  Let us point out that the use of the Lipschitz truncation is not a
  problem of the regularity of~$\bfxi$ as it is $C^\infty_0$. It is
  the precise estimates above that we need.

  We calculate
  \begin{align*}
    \dashint_B \!\mathcal{A}\nabla \bfu \!\cdot\! \nabla \bfxi\,dx &=
    \dashint_B \!\mathcal{A}\nabla \bfu \!\cdot\! \nabla
    \bfxi_\lambda\,dx + \dashint_B \!\mathcal{A}\nabla \bfu \!\cdot\!
    \nabla (\bfxi-\bfxi_\lambda) \,dx =: I + II.
  \end{align*}
  Using Young's inequality and~\eqref{eq:xi3} we estimate
  \begin{align*}
    II&= \dashint_B \!\mathcal{A}\nabla \bfu \!\cdot\!  \nabla
    (\bfxi-\bfxi_\lambda) \chi_{\set{\bfxi \not= \bfxi_\lambda}} \,dx
    \\
    &\leq c\, \dashint_B \psi(\abs{\nabla \bfu} \chi_{\set{\bfxi \not=
        \bfxi_\lambda}} ) \,dx + \frac{1}{2} \dashint_B
    \psi^*(\abs{\nabla \bfxi})\,dx =: II_1 + II_2,
  \end{align*}
  where $c$ depends on $\abs{\mathcal{A}}$, $\Delta_2(\psi,\psi^*)$.
  With H{\"o}lder's inequality we get
  \begin{align*}
    II_1 &\leq c \bigg(\dashint_B \big( \psi(\abs{\nabla \bfu)})
    \big)^s\,dx\bigg)^{\frac 1s} \bigg( \frac{\abs{\set{\bfxi_\lambda
          \not= \bfxi}}}{\abs{B}} \bigg)^{1 - \frac 1s}.
  \end{align*}
  If follows from \eqref{eq:xi2},~\eqref{eq:choicegamma} and $\lambda
  \geq \gamma$ that
  \begin{align*}
    \frac{\abs{\set{\bfxi_\lambda \not= \bfxi}}}{\abs{B}} &\leq
    \frac{c \psi^*(\gamma)}{m_0 \psi^*(\lambda)} \leq \frac{c}{m_0}.
  \end{align*}
  Thus
  \begin{align*}
    II_1 &\leq c \bigg(\dashint_B \big( \psi(\abs{\nabla \bfu)})
    \big)^s\,dx\bigg)^{\frac 1s} \bigg( \frac{c}{m_0} \bigg)^{1 -
      \frac 1s}.
  \end{align*}
  We choose $m_0$ so large such that
  \begin{align*}
    II_1 &\leq \frac{\epsilon}{2} \bigg(\dashint_B \big(
    \psi(\abs{\nabla \bfu)}) \big)^s\,dx\bigg)^{\frac 1s}.
  \end{align*}
  Since $\bfu$ is almost $\mathcal{A}$-harmonic and
  $\norm{\nabla\bfxi_\lambda}_\infty \leq c\, \lambda \leq c\, 2^{m_0}
  \gamma$ we have
  \begin{align*}
    |I| &\leq \delta\, \dashint_{\widetilde B} \abs{\nabla \bfu}\,dx\,
    \norm{\nabla \bfxi_\lambda}_\infty \leq \delta\,
    \dashint_{\widetilde B} \abs{\nabla \bfu}\,dx\, c\,2^{m_0} \gamma.
  \end{align*}
  We apply Young's inequality and~\eqref{eq:choicegamma} to get
  \begin{align*}
    |I| &\leq \delta 2^{m_0} c\bigg(\dashint_{\widetilde B} \psi(
    \abs{\nabla \bfu})\,dx + \psi^*(\gamma) \bigg)
    \\
    &\leq \delta 2^{m_0} c \,\dashint_{\widetilde B} \psi( \abs{\nabla
      \bfu}) \,dx + \delta 2^{m_0}c \dashint_B \psi^*(\abs{\nabla
      \bfxi})\,dx.
  \end{align*}
  Now, we choose $\delta>0$ so small such that $\delta 2^{m_0} c\leq
  \epsilon/2$. Thus
  \begin{align*}
    |I| &\leq \frac{\epsilon}{2} \dashint_{\widetilde B}
    \psi( \abs{\nabla \bfu}) \,dx  +
    \frac{1}{2} \dashint_B \psi^*(\abs{\nabla \bfxi})\,dx.
  \end{align*}
  Combining the estimates for $I$, $II$ and $II_1$ we get
  \begin{align*}
    \dashint_B \!\mathcal{A}\nabla \bfu \!\cdot\! \nabla \bfxi\,dx
    \leq \epsilon \Bigg( \bigg(\dashint_{B} \big(\psi( \abs{\nabla
      \bfu}) \big)^s\,dx\bigg)^{\frac 1s} + \dashint_{\widetilde B}
    \psi( \abs{\nabla \bfu}) \,dx \Bigg) + \dashint_B
    \psi^*(\abs{\nabla \bfxi})\,dx.
  \end{align*}
  Now taking the supremum over all $\bfxi \in
  C^\infty_0(B)$ and using~\eqref{eq:Liptrunvi} we get
  \begin{align*}
    \dashint_B \psi(\abs{\nabla \bfw})\,dx \leq \epsilon \Bigg(
    \bigg(\dashint_{B} \big(\psi( \abs{\nabla \bfu})
    \big)^s\,dx\bigg)^{\frac 1s} + \dashint_{\widetilde B} \psi(
    \abs{\nabla \bfu}) \,dx \Bigg).
  \end{align*}
  The claim follows by Poincar{\'e}  inequality, see
Theorem~\ref{thm:poincare}.
\end{proof}

\section{\texorpdfstring{Almost $\mathcal{A}$-harmonicity}{Almost A-harmonicity}}
\label{sec:almost}

The following result is a special case
of~\cite[Lemma~A.2]{DieKapSch11}.
\begin{lemma}
  \label{lem:manyavg}
  Let $B \subset \Rn$ be a ball and $\bfw \in W^{1,\phi}(B)$. Then 
  \begin{align*}
    \dashint_B \abs{\bfV(\nabla \bfw) - \mean{\bfV(\nabla
        \bfw)}_B}^2\,dx &\sim \dashint_B \abs{\bfV(\nabla \bfw) -
      \bfV(\mean{\nabla \bfw}_B)}^2\,dx.
  \end{align*}
  The constants are independent of $B$ and $\bfw$; they only depend on
  the characteristics of~$\phi$.
\end{lemma}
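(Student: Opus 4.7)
The easy inequality,
$$
\dashint_B |\bfV(\nabla \bfw) - \mean{\bfV(\nabla \bfw)}_B|^2\,dx \leq \dashint_B |\bfV(\nabla \bfw) - \bfV(\mean{\nabla \bfw}_B)|^2\,dx,
$$
holds with constant $1$: among all constants $c \in \RNn$, the arithmetic mean $\mean{\bfV(\nabla \bfw)}_B$ is the $L^2(B)$-best approximation of $\bfV(\nabla \bfw)$, so in particular it beats the competitor $c = \bfV(\mean{\nabla \bfw}_B)$.

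For the reverse direction, set $\bfa := \mean{\nabla \bfw}_B$ and $\bfV_B := \mean{\bfV(\nabla \bfw)}_B$. The triangle inequality in $L^2(B)$ yields
$$
\bigg(\dashint_B |\bfV(\nabla \bfw) - \bfV(\bfa)|^2\,dx\bigg)^{1/2} \leq \bigg(\dashint_B |\bfV(\nabla \bfw) - \bfV_B|^2\,dx\bigg)^{1/2} + |\bfV_B - \bfV(\bfa)|,
$$
so the claim reduces to the single-point bound
$$
|\bfV_B - \bfV(\bfa)|^2 \leq C \dashint_B |\bfV(\nabla \bfw) - \bfV_B|^2\,dx. \quad (\ast)
$$

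To prove $(\ast)$, I would exploit that $\bfV$ is a bijection on $\RNn$ under Assumption~\ref{ass:phi}; denote its inverse by $\bfV^{-1}$. Since $\bfV^{-1}\circ \bfV = \mathrm{id}$, we have $\bfa = \mean{\bfV^{-1}(\bfV(\nabla \bfw))}_B$, and therefore
$$
\bfV^{-1}(\bfV_B) - \bfa = \dashint_B \bigl(\bfV^{-1}(\bfV_B) - \bfV^{-1}(\bfV(\nabla \bfw))\bigr)\,dx.
$$
Using Lemma~\ref{lem:hammer} to translate the left side of $(\ast)$ into shifted-N-function form, then applying Jensen's inequality to the convex modulus $\phi_{|\bfa|}$, and finally using Lemma~\ref{lem:hammer} once more in the reverse direction, we obtain schematically
$$
|\bfV_B - \bfV(\bfa)|^2 \sim \phi_{|\bfa|}(|\bfV^{-1}(\bfV_B) - \bfa|) \leq \dashint_B \phi_{|\bfa|}\bigl(|\bfV^{-1}(\bfV_B) - \nabla \bfw|\bigr)\,dx \sim \dashint_B |\bfV_B - \bfV(\nabla \bfw)|^2\,dx.
$$

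The main obstacle is a \emph{shift-change} issue hidden in the last step: Lemma~\ref{lem:hammer} produces moduli of the form $\phi_{|\bfV^{-1}(\bfV_B)|}$ rather than $\phi_{|\bfa|}$, and the two are a priori different. To reconcile them I would invoke the standard shift-change equivalence of the family $\set{\phi_a}_{a\geq 0}$, which under Assumption~\ref{ass:phi} guarantees $\phi_a(t) \sim \phi_b(t)$ whenever $a+t$ and $b+t$ are comparable. A separate estimate—again via Jensen together with Lemma~\ref{lem:hammer}—shows that $|\bfV^{-1}(\bfV_B)|$ and $|\bfa|$ are comparable up to a quantity already bounded by the right-hand side of $(\ast)$, which closes the argument.
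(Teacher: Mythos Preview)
The paper does not prove this lemma; it merely cites \cite[Lemma~A.2]{DieKapSch11}. Your outline is therefore more than the paper offers, and the strategy---best-constant for one direction, reduction to the single-point bound~$(\ast)$ via triangle inequality, then Jensen plus Lemma~\ref{lem:hammer}---is exactly the right one.

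However, your handling of the shift-change is where the argument breaks. If you carry out your proposed fix honestly, the standard change-of-shift inequality gives
\[
\phi_{|\bfa|}(t)\;\leq\;c\,\phi_{|\bfV^{-1}(\bfV_B)|}(t)\;+\;c\,\bigabs{\bfV(\bfa)-\bfV_B}^2,
\]
and after integrating over~$B$ and inserting Lemma~\ref{lem:hammer} you arrive at
\[
\bigabs{\bfV_B-\bfV(\bfa)}^2\;\leq\;c\,\dashint_B \bigabs{\bfV_B-\bfV(\nabla\bfw)}^2\,dx\;+\;c\,\bigabs{\bfV_B-\bfV(\bfa)}^2,
\]
which is circular: there is no reason the constant on the last term is below~$1$. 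Your alternative phrasing (``$|\bfV^{-1}(\bfV_B)|$ and $|\bfa|$ are comparable up to a quantity bounded by the right-hand side'') does not help either, because the shift comparison $\phi_a(t)\sim\phi_b(t)$ needs $a+t\sim b+t$ \emph{pointwise} in~$x$, and here $t=|\bfV^{-1}(\bfV_B)-\nabla\bfw(x)|$ can be arbitrarily small.

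The clean repair is to avoid the shift change altogether: simply run the whole argument with the shift $b:=|\bfV^{-1}(\bfV_B)|$ instead of~$|\bfa|$. By the symmetry implicit in Lemma~\ref{lem:hammer} (namely $\phi_{|\bfP|}(|\bfP-\bfQ|)\sim\phi_{|\bfQ|}(|\bfP-\bfQ|)$, since both are $\sim|\bfV(\bfP)-\bfV(\bfQ)|^2$) you have
\[
\bigabs{\bfV_B-\bfV(\bfa)}^2\sim\phi_b\bigl(|\bfV^{-1}(\bfV_B)-\bfa|\bigr)
\leq \dashint_B \phi_b\bigl(|\bfV^{-1}(\bfV_B)-\nabla\bfw|\bigr)\,dx
\sim \dashint_B \bigabs{\bfV_B-\bfV(\nabla\bfw)}^2\,dx,
\]
where the middle step is Jensen with the fixed convex modulus~$\phi_b$ and the last step is Lemma~\ref{lem:hammer} with $\bfP=\bfV^{-1}(\bfV_B)$. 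No shift change is needed.
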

\begin{lemma}
  \label{lem:avg}
  There exists $\delta>0$, which only depends on the characteristics
  of~$\phi$, such that for every ball~$B$ with $B \subset
  \subset\Omega$ and every $\bfu\in W^{1,\phi}(B)$ the estimate
  \begin{align}
    \label{eq:avg}
    \dashint_B \abs{\bfV(\nabla \bfu) - \mean{\bfV(\nabla
        \bfu)}_B}^2\,dx &\leq \delta\, \dashint_B \abs{\bfV(\nabla
      \bfu)}^2\,dx
  \end{align}
  implies
  \begin{gather}
    \label{eq:avg2}
    \dashint_B \abs{\bfV(\nabla \bfu)}^2\,dx \leq 4\,
    \abs{\bfV(\mean{\nabla \bfu}_B)}^2,
    \\
    \label{eq:avg3}
    \dashint_B \abs{\bfV(\nabla \bfu) - \mean{\bfV(\nabla
        \bfu)}_B}^2\,dx \leq 4\,\delta\, \abs{\bfV(\mean{\nabla
        \bfu}_B)}^2.
  \end{gather}
\end{lemma}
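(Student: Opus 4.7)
The plan is to reduce \eqref{eq:avg2} to a triangle inequality argument using Lemma~\ref{lem:manyavg}, and then derive \eqref{eq:avg3} as an immediate consequence of \eqref{eq:avg2} combined with the hypothesis \eqref{eq:avg}.

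First I would exchange the mean-value-of-$\bfV$ for $\bfV$-of-the-mean. By Lemma~\ref{lem:manyavg} there is a constant $c_0$ depending only on the characteristics of~$\phi$ with
\begin{align*}
  \dashint_B \bigabs{\bfV(\nabla \bfu) - \bfV(\mean{\nabla \bfu}_B)}^2\,dx
  \leq c_0 \dashint_B \bigabs{\bfV(\nabla \bfu) - \mean{\bfV(\nabla \bfu)}_B}^2\,dx.
\end{align*}
Combining this with the assumption \eqref{eq:avg}, the quantity on the left is bounded by $c_0\,\delta\,\dashint_B \abs{\bfV(\nabla \bfu)}^2\,dx$.

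Next I apply the triangle inequality in $L^2(B)$ for the decomposition $\bfV(\nabla\bfu) = \bigl(\bfV(\nabla\bfu)-\bfV(\mean{\nabla\bfu}_B)\bigr) + \bfV(\mean{\nabla\bfu}_B)$ and square it (using $(a+b)^2\le 2a^2+2b^2$) to get
\begin{align*}
  \dashint_B \abs{\bfV(\nabla \bfu)}^2\,dx
  \leq 2\dashint_B \bigabs{\bfV(\nabla \bfu)-\bfV(\mean{\nabla \bfu}_B)}^2\,dx
  + 2\,\abs{\bfV(\mean{\nabla \bfu}_B)}^2.
\end{align*}
Inserting the previous estimate, the first term on the right is at most $2c_0\delta\,\dashint_B\abs{\bfV(\nabla\bfu)}^2\,dx$. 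Choosing $\delta>0$ so small that $2c_0\delta\le \tfrac12$ (which is a condition depending only on the characteristics of~$\phi$), I can absorb this term on the left-hand side and obtain
\begin{align*}
  \dashint_B \abs{\bfV(\nabla \bfu)}^2\,dx \leq 4\,\abs{\bfV(\mean{\nabla \bfu}_B)}^2,
\end{align*}
which is precisely \eqref{eq:avg2}. Finally, chaining \eqref{eq:avg} with the just-proved \eqref{eq:avg2} gives
\begin{align*}
  \dashint_B \abs{\bfV(\nabla \bfu)-\mean{\bfV(\nabla \bfu)}_B}^2\,dx
  \leq \delta\,\dashint_B \abs{\bfV(\nabla \bfu)}^2\,dx
  \leq 4\delta\,\abs{\bfV(\mean{\nabla \bfu}_B)}^2,
\end{align*}
which is \eqref{eq:avg3}.

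There is no real obstacle here: the entire argument is a standard absorption after using the equivalence from Lemma~\ref{lem:manyavg}. The only nontrivial ingredient is that replacing $\mean{\bfV(\nabla\bfu)}_B$ by $\bfV(\mean{\nabla\bfu}_B)$ costs only a characteristic-dependent constant, and this is exactly what Lemma~\ref{lem:manyavg} supplies. The choice of $\delta$ is made once and for all depending only on the $\phi$-characteristics through the constant $c_0$.
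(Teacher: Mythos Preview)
Your proof is correct and follows essentially the same approach as the paper: use Lemma~\ref{lem:manyavg} to pass from $\mean{\bfV(\nabla\bfu)}_B$ to $\bfV(\mean{\nabla\bfu}_B)$, combine with the triangle inequality and the hypothesis~\eqref{eq:avg}, absorb the small term to obtain~\eqref{eq:avg2}, and then chain~\eqref{eq:avg} with~\eqref{eq:avg2} to get~\eqref{eq:avg3}. The paper's proof is virtually identical, only reversing the order in which the triangle inequality and Lemma~\ref{lem:manyavg} are invoked.
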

\begin{proof}
  It follows from~\eqref{eq:avg} and Lemma~\ref{lem:manyavg} that
  \begin{align*}
    \dashint_B \abs{\bfV(\nabla \bfu)}^2\,dx &\leq 2\,\dashint_B
    \abs{\bfV(\nabla \bfu) - \bfV(\mean{\nabla \bfu}_B)}^2\,dx +
    2\,\abs{\bfV(\mean{\nabla \bfu}_B)}^2
    \\
    &\leq c\,\dashint_B \abs{\bfV(\nabla \bfu) - \mean{\bfV(\nabla
        \bfu)}_B}^2\,dx +  2\,\abs{\bfV(\mean{\nabla \bfu}_B)}^2
    \\
    &\leq \delta\,c\, \dashint_B \abs{\bfV(\nabla \bfu)}^2\,dx +
    2\,\abs{\bfV(\mean{\nabla \bfu}_B)}^2.
  \end{align*}
  For small~$\delta$ we absorb the first term of the right-hand side to
  get~\eqref{eq:avg2}. The remaining estimate~\eqref{eq:avg3} is a
  combination of~\eqref{eq:avg} and~\eqref{eq:avg2}.
\end{proof}
\begin{lemma}
\label{lem:ab}
 Let $\bfu$ be a local minimizer of $\mathcal{F}$. Then
  for every ball~$B$ with $2B \subset\subset \Omega$ and every $\bfQ \in
  \RNn$ it  holds
  \begin{align*}
    \dashint_B \phi_{\abs{\bfQ}}(\abs{\nabla \bfu - \bfQ})\,dx &\leq c\,
    \phi_{\abs{\bfQ}} \bigg( \dashint_{2B} \abs{\nabla \bfu - \bfQ} \,dx
    \bigg).
  \end{align*}
\end{lemma}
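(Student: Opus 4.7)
The plan is to chain the Caccioppoli estimate (Theorem~\ref{thm:cacc}) with the Orlicz Sobolev--Poincar{\'e} inequality (Theorem~\ref{thm:poincare}) to obtain a reverse-H{\"o}lder-type bound, and then close the argument by Jensen's inequality applied to a concave majorant of an appropriate power of $\phi_{\abs{\bfQ}}$.

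First, let $R$ denote the radius of $B$ and choose an affine map $\bfq$ on $\Rn$ with values in $\RN$ satisfying $\nabla\bfq=\bfQ$ and $\langle\bfu-\bfq\rangle_{2B}=0$. Applying Theorem~\ref{thm:cacc} on $B\subset 2B\subset\subset\Omega$ gives
\[
\dashint_B \phi_{\abs{\bfQ}}(\abs{\nabla\bfu-\bfQ})\,dx
\leq c\,\dashint_{2B}\phi_{\abs{\bfQ}}\!\left(\frac{\abs{\bfu-\bfq}}{R}\right)dx.
\]
Since the $\Delta_2$-constants of $\phi_{\abs{\bfQ}}$ and $(\phi_{\abs{\bfQ}})^*$ are bounded uniformly in $\abs{\bfQ}$, Theorem~\ref{thm:poincare} applied to $\bfw=\bfu-\bfq$ on $2B$ with $\varphi=\phi_{\abs{\bfQ}}$ yields
\[
\dashint_{2B}\phi_{\abs{\bfQ}}\!\left(\frac{\abs{\bfu-\bfq}}{2R}\right)dx
\leq K\left(\dashint_{2B}\phi_{\abs{\bfQ}}^{\alpha}(\abs{\nabla\bfu-\bfQ})\,dx\right)^{1/\alpha}
\]
for some $\alpha\in(0,1)$ depending only on the characteristics of $\phi$. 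Chaining these two estimates (and absorbing the factor $2$ inside $\phi_{\abs{\bfQ}}$ by $\Delta_2$) produces the reverse-H{\"o}lder-type bound
\[
\dashint_B\phi_{\abs{\bfQ}}(\abs{\nabla\bfu-\bfQ})\,dx
\leq c\left(\dashint_{2B}\phi_{\abs{\bfQ}}^{\alpha}(\abs{\nabla\bfu-\bfQ})\,dx\right)^{1/\alpha}.
\]

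To conclude, I would use the representation $\phi_{\abs{\bfQ}}(t)=t^{p_0}h(t)^{p_1-p_0}$ supplied by Lemma~\ref{lem:typepq}, with $h$ quasi-concave and exponents $p_0,p_1$ uniform in $\abs{\bfQ}$. When $\alpha p_1\leq 1$, the function $\phi_{\abs{\bfQ}}^{\alpha}$ is equivalent to a concave function, and Jensen's inequality then gives
\[
\left(\dashint_{2B}\phi_{\abs{\bfQ}}^{\alpha}(\abs{\nabla\bfu-\bfQ})\,dx\right)^{1/\alpha}
\leq c\,\phi_{\abs{\bfQ}}\!\left(\dashint_{2B}\abs{\nabla\bfu-\bfQ}\,dx\right),
\]
which combined with the previous display is the claim. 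If the $\alpha$ produced by Theorem~\ref{thm:poincare} fails $\alpha p_1\leq 1$, I would first invoke a Gehring-type self-improvement of the reverse-H{\"o}lder inequality above to replace $\alpha$ by any smaller exponent $\alpha'\le 1/p_1$, and then apply Jensen to $\phi_{\abs{\bfQ}}^{\alpha'}$.

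The main technical obstacle is precisely this reduction of the exponent: the $\alpha$ coming from the Orlicz Sobolev--Poincar{\'e} inequality is fixed by the characteristics of $\phi$ and need not satisfy $\alpha p_1\leq 1$, so the concavity required for the final Jensen step must be purchased through a self-improving argument for reverse-H{\"o}lder inequalities in the lower exponent. Once the exponent has been driven below $1/p_1$, the closing step is routine.
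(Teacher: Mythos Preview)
Your approach matches the paper's: the reverse-H\"older bound is obtained there by citing Corollary~\ref{cor:gehring2} (whose content is exactly the Caccioppoli--Sobolev--Poincar\'e chain you spell out), and the passage from that bound to the claimed estimate is delegated to Corollary~3.4 of~\cite{DieKapSch11}. That external result performs precisely the exponent-reduction-plus-Jensen argument you sketch and correctly flag as the remaining technical point.
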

\begin{proof}
  From Corollary~\ref{cor:gehring2} we get 
\begin{align*}
  \dashint_B \phi_{\abs{\bfQ}}(\abs{\nabla \bfu - \bfQ})\,dx &\leq c\,
  \bigg(\dashint_{2B} \phi_{\abs{\bfQ}}(\abs{\nabla \bfu -
    \bfQ})^{\alpha}\,dx\bigg)^{\frac{1}{\alpha}} .
\end{align*}
We can apply then Corollary~3.4 in~\cite{DieKapSch11} to
conclude.
\end{proof}
\begin{lemma}
  \label{lem:lowavg}
  For all $\epsilon>0$ there exists $\delta>0$, which only depends
  on~$\epsilon$ and the characteristics of~$\phi$, such that for every
  ball~$B$ with $B \subset \subset \Omega$ and every $\bfu\in W^{1,\phi}(B)$
  \begin{align}
    \label{eq:lowavg}
    \dashint_B \abs{\bfV(\nabla \bfu) - \mean{\bfV(\nabla
        \bfu)}_B}^2\,dx &\leq \delta\, \dashint_B \abs{\bfV(\nabla
      \bfu)}^2\,dx
  \end{align}
  implies
  \begin{align}
    \label{eq:lowavg2}
    \dashint_B \abs{\nabla \bfu - \mean{\nabla \bfu}_B}\,dx \leq
    \epsilon\, \abs{\mean{\nabla \bfu}_B}.
  \end{align}
\end{lemma}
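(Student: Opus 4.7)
The plan is to transfer the smallness assumption from the $\bfV$-side to the gradient side via the shifted N-functions $\phi_{|\bfQ|}$, using the previous lemmas. Write $\bfQ := \mean{\nabla\bfu}_B$ and note that if $\bfQ = \bfzero$ then \eqref{eq:lowavg} together with Lemma~\ref{lem:avg} (applied with $\delta$ taken small enough) forces $\nabla\bfu = \bfzero$ a.e.\ on $B$ and the conclusion is trivial. Thus we may assume $|\bfQ|>0$.

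First, I would require $\delta$ to be at most the threshold of Lemma~\ref{lem:avg}, so that \eqref{eq:avg2} and \eqref{eq:avg3} hold. Combining \eqref{eq:avg3} with Lemma~\ref{lem:manyavg} gives
\begin{align*}
 \dashint_B |\bfV(\nabla\bfu) - \bfV(\bfQ)|^2\,dx \leq c\,\delta\, |\bfV(\bfQ)|^2.
\end{align*}
Now invoke Lemma~\ref{lem:hammer}: on the left-hand side $|\bfV(\nabla\bfu) - \bfV(\bfQ)|^2 \sim \phi_{|\bfQ|}(|\nabla\bfu - \bfQ|)$, and on the right-hand side $|\bfV(\bfQ)|^2 \sim \phi(|\bfQ|) \sim \phi_{|\bfQ|}(|\bfQ|)$ (using $\phi_a(a)\sim \phi(a)$, which follows directly from the definition~\eqref{eq:phi_shifted}). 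Therefore
\begin{align*}
 \dashint_B \phi_{|\bfQ|}(|\nabla\bfu - \bfQ|)\,dx \leq c\,\delta\, \phi_{|\bfQ|}(|\bfQ|).
\end{align*}

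Next I would apply Jensen's inequality to the convex N-function $\phi_{|\bfQ|}$ to move the average inside and obtain
\begin{align*}
 \phi_{|\bfQ|}\!\left(\dashint_B |\nabla\bfu - \bfQ|\,dx\right) \leq c\,\delta\, \phi_{|\bfQ|}(|\bfQ|).
\end{align*}
Finally, the family $\{\phi_a\}_{a\geq 0}$ satisfies $\Delta_2(\phi_a,\phi_a^*)$ uniformly in $a$, so Lemma~\ref{lem:typepq} gives exponents $1<p_0\leq p_1<\infty$, independent of $\bfQ$, such that $\phi_{|\bfQ|}(s t) \geq c_1\min\{s^{p_0},s^{p_1}\}\phi_{|\bfQ|}(t)$ for all $s,t\geq 0$. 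Choosing $s = \bigl(\dashint_B |\nabla\bfu-\bfQ|\,dx\bigr)/|\bfQ|$ and $t=|\bfQ|$ yields
\begin{align*}
 \min\!\left\{\left(\tfrac{1}{|\bfQ|}\dashint_B|\nabla\bfu-\bfQ|\,dx\right)^{p_0},\left(\tfrac{1}{|\bfQ|}\dashint_B|\nabla\bfu-\bfQ|\,dx\right)^{p_1}\right\} \leq c\,\delta.
\end{align*}
Given $\epsilon>0$, one then picks $\delta>0$ so small that $c\,\delta < \min\{\epsilon^{p_0},\epsilon^{p_1}\}$; this forces $\dashint_B|\nabla\bfu-\bfQ|\,dx \leq \epsilon |\bfQ|$, which is precisely \eqref{eq:lowavg2}.

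The only mildly delicate step is the last one: one has to ensure that the quantitative dependence of the $(p_0,p_1)$-type estimate is uniform in the shift $|\bfQ|$, which is exactly what the uniform $\Delta_2$ bounds on $\{\phi_a\}$ provide via Lemma~\ref{lem:typepq}. Everything else is a direct combination of Lemmas~\ref{lem:manyavg}, \ref{lem:avg} and \ref{lem:hammer}.
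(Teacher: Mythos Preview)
Your proposal is correct and follows essentially the same route as the paper: set $\bfQ=\mean{\nabla\bfu}_B$, pass from \eqref{eq:lowavg} to $\dashint_B\phi_{|\bfQ|}(|\nabla\bfu-\bfQ|)\,dx\le c\,\delta\,\phi_{|\bfQ|}(|\bfQ|)$ via Lemmas~\ref{lem:avg}, \ref{lem:manyavg}, \ref{lem:hammer}, apply Jensen, and then invert using the uniform $\Delta_2$-behavior of the shifted family. The only cosmetic difference is that the paper concludes directly from the $\Delta_2$-condition of $\phi_{|\bfQ|}$, whereas you make this step quantitative through the $(p_0,p_1)$-type bound of Lemma~\ref{lem:typepq}; both arguments are equivalent and rely on the same uniformity in the shift.
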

\begin{proof}
  Let $\bfQ = \mean{\nabla \bfu}_B$. Then, by Jensen's
  inequality and Lemma~\ref{lem:avg} we get
  \begin{align*}
    \phi_{\abs{\mean{\nabla \bfu}_B}}\bigg( \dashint_B \abs{\nabla
      \bfu - \mean{\nabla \bfu}_B}\,dx \bigg) &\leq \dashint_B
    \phi_{\abs{\mean{\nabla \bfu}_B}}(\abs{\nabla \bfu - \mean{\nabla
        \bfu}_B})\,dx
    \\
    &\leq c\, \dashint_B \abs{\bfV(\nabla \bfu) - \bfV(\mean{\nabla
        \bfu}_B)}^2\,dx
    \\
    &\leq \delta\,c\, \abs{\bfV(\mean{\nabla \bfu}_B)}^2
    \\
    &\leq \delta\,c\, \phi(\abs{\mean{\nabla \bfu}_B})
    \\
    &\leq \delta\,c\, \phi_{\abs{\mean{\nabla
          \bfu}_B}}(\abs{\mean{\nabla \bfu}_B}).
  \end{align*}
  For the last inequality we used the fact that $\phi(a) \sim \phi_a(a)$ for $a \geq 0$. Using
  the $\Delta_2$-condition of~$\phi_{\abs{\mean{\nabla \bfu}_B}}$ it follows that for
every~$\epsilon>0$ there exists a ~$\delta>0$ such that
  \begin{align*}
    \dashint_B \abs{\nabla \bfu - \mean{\nabla \bfu}_B} \,dx &\leq
    \epsilon\,\abs{\mean{\nabla \bfu}_B}.
  \end{align*}
\end{proof}
Note that the smallness assumption in~\eqref{eq:lowavg} automatically
implies that $\mean{\nabla \bfu}_B\not=0$ (unless $\nabla \bfu=0$ on
$B$). So the smallness assumption ensures that in some sense in the
non-degenerate situation.
\begin{lemma}
  \label{lem:almost}
  For all $\epsilon>0$ there exists $\delta >0$ such that for every
  local minimizer $\bfu\in W^{1,\phi}_{loc}(\Omega)$ of $\mathcal{F}$
  and every ball~$B$ with $2B \subset\subset \Omega$ and
  \begin{align}
    \label{eq:osc_small2}
    \dashint_{2B} \abs{\bfV(\nabla \bfu) - \mean{\bfV(\nabla
        \bfu)}_{2B}}^2\,dx &\leq \delta\, \dashint_{2B}
    \abs{\bfV(\nabla \bfu)}^2\,dx
  \end{align}
  there holds
  \begin{align}
    \label{eq:almost}
    \biggabs{\dashint_B D^2 f(\bfQ)(\nabla \bfu - \bfQ, \nabla
      \bfxi\,dx} \leq \epsilon\, \phi''(\abs{\bfQ}) \dashint_{2B}
    \abs{\nabla \bfu - \bfQ}\,dx \norm{\nabla
      \bfxi}_\infty .
  \end{align}
  for every $\bfxi \in C^\infty_0(B)$, where $\bfQ := \mean{\nabla
    \bfu}_{2B}$. In particular, $\bfu$ is almost $\calA$-harmonic (in
  the sense of Theorem~\ref{thm:Aappr_phi}), with $\calA=D^2
  f(\bfQ)/\phi''(\abs{\bfQ})$.
\end{lemma}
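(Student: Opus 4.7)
The plan is to exploit the Euler--Lagrange equation for the local minimizer $\bfu$ in order to compare $D^2f(\bfQ)(\nabla\bfu-\bfQ)$ with the Taylor remainder of $Df$ at~$\bfQ$, and then to estimate the error by a good/bad set decomposition governed by the size of $|\nabla\bfu-\bfQ|$ relative to~$|\bfQ|$. Throughout, $\bfQ=\mean{\nabla\bfu}_{2B}$ and I abbreviate $\bfR:=\nabla\bfu-\bfQ$ and $\bfP_t:=\bfQ+t\bfR$. Note that by choosing $\delta$ in \eqref{eq:osc_small2} small enough, Lemma~\ref{lem:lowavg} gives $\dashint_{2B}|\bfR|\,dx\le\epsilon_1|\bfQ|$ with $\epsilon_1$ as small as desired; in particular $\bfQ\neq\bfzero$.

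First, the minimality of $\bfu$ yields $\int_B Df(\nabla\bfu)\cdot\nabla\bfxi\,dx=0$, while $\int_B Df(\bfQ)\cdot\nabla\bfxi\,dx=0$ trivially, since $Df(\bfQ)$ is constant and $\bfxi\in C^\infty_0(B)$. Combined with Taylor's formula
\begin{align*}
Df(\nabla\bfu)-Df(\bfQ)=\int_0^1 D^2f(\bfP_t)\,\bfR\,dt,
\end{align*}
this rewrites the object of interest as
\begin{align*}
\dashint_B D^2f(\bfQ)(\bfR,\nabla\bfxi)\,dx=\dashint_B\int_0^1\bigl[D^2f(\bfQ)-D^2f(\bfP_t)\bigr](\bfR,\nabla\bfxi)\,dt\,dx.
\end{align*}

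Second, I split $B=E\cup(B\setminus E)$ where $E:=\{x\in B:|\bfR(x)|\le|\bfQ|/2\}$. On $E$ one has $|t\bfR|\le|\bfQ|/2$ for every $t\in[0,1]$, so assumption \ref{itm:D2fdiff} applies and
\begin{align*}
\int_0^1\bigabs{D^2f(\bfQ)-D^2f(\bfP_t)}\,dt\le c\,\phi''(\abs{\bfQ})\,\abs{\bfQ}^{-\beta}\,\abs{\bfR}^{\beta}.
\end{align*}
Hence the $E$-contribution is controlled by $c\,\phi''(\abs{\bfQ})\,\abs{\bfQ}^{-\beta}\|\nabla\bfxi\|_\infty \dashint_E\abs{\bfR}^{1+\beta}\,dx$. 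Hölder's inequality with exponents $1/(1-\beta)$ and $1/\beta$ gives
\begin{align*}
\dashint_E\abs{\bfR}^{1+\beta}\,dx\le\Bigl(\dashint_E\abs{\bfR}\,dx\Bigr)^{1-\beta}\Bigl(\dashint_E\abs{\bfR}^{2}\,dx\Bigr)^{\beta}.
\end{align*}
Since on $E$ we have $\phi_{\abs{\bfQ}}(\abs{\bfR})\sim\phi''(\abs{\bfQ})\abs{\bfR}^2$, Lemma~\ref{lem:ab} combined with $\phi_{\abs{\bfQ}}(\dashint_{2B}|\bfR|)\sim\phi''(\abs{\bfQ})(\dashint_{2B}|\bfR|)^2$ (valid because $\dashint_{2B}|\bfR|\le|\bfQ|$) yields $\dashint_E|\bfR|^2\,dx\le c(\dashint_{2B}|\bfR|\,dx)^2$. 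Combining, the $E$-part is bounded by
\begin{align*}
c\,\phi''(\abs{\bfQ})\biggl(\frac{\dashint_{2B}\abs{\bfR}\,dx}{\abs{\bfQ}}\biggr)^{\!\beta}\dashint_{2B}\abs{\bfR}\,dx\,\norm{\nabla\bfxi}_\infty\le c\,\epsilon_1^{\beta}\,\phi''(\abs{\bfQ})\dashint_{2B}\abs{\bfR}\,dx\,\norm{\nabla\bfxi}_\infty,
\end{align*}
which is of the desired form once $\epsilon_1$ is chosen small enough.

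Third, on the bad set $B\setminus E$ I use \ref{itm:D2fbnd} to write $|D^2f(\bfQ)-D^2f(\bfP_t)|\le c\phi''(\abs{\bfQ})+c\phi''(\abs{\bfP_t})$; Lemma~\ref{lem:phi_l_prop} integrates the second term in $t$ to something $\sim\phi''(|\bfQ|+|\nabla\bfu|)$, which together with Lemma~\ref{lem:hammer} translates the integrand into an expression comparable to $\abs{\bfV(\nabla\bfu)-\bfV(\bfQ)}^2/|\bfR|$ up to factors under control. Chebyshev gives $|B\setminus E|/|B|\le c\,\dashint_{2B}|\bfR|/|\bfQ|\le c\epsilon_1$, so Hölder's inequality with exponent $s_0$ and the reverse Hölder estimate in Corollary~\ref{cor:gehring3} allow the bad set contribution to be dominated by $c\,\epsilon_1^{1-1/s_0}\phi''(\abs{\bfQ})\dashint_{2B}|\bfR|\,dx\,\|\nabla\bfxi\|_\infty$, which is again small. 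Summing the $E$ and $B\setminus E$ contributions and choosing $\delta$ in \eqref{eq:osc_small2} small enough to make $\epsilon_1$ small yields \eqref{eq:almost}. The last sentence of the lemma is then immediate upon dividing by $\phi''(\abs{\bfQ})$ and rescaling $\bfxi$.

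The main obstacle is the bad-set estimate: $\phi''$ is not in general monotone (its behaviour depends on whether the type exponents $p_0,p_1$ are on either side of~$2$), so controlling $\phi''(\abs{\bfP_t})$ requires passing through the shifted N-function $\phi_{\abs{\bfQ}}$ and the excess $\abs{\bfV(\nabla\bfu)-\bfV(\bfQ)}^2$ via Lemma~\ref{lem:hammer}. The Gehring-type higher integrability of Corollary~\ref{cor:gehring3} is the essential ingredient that, combined with the small measure of $B\setminus E$ provided by Lemma~\ref{lem:lowavg}, absorbs the bad set into the~$\epsilon$.
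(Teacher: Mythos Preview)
Your overall architecture — Euler--Lagrange plus Taylor, then a good/bad split $E\cup(B\setminus E)$ according to whether $|\bfR|\le\tfrac12|\bfQ|$ — is exactly the paper's. Your treatment of the good set $E$ is correct and in fact a pleasant simplification: where the paper uses Young's inequality with a free parameter and then Gehring (Corollary~\ref{cor:gehring3}) on~$B^<$, you get away with H\"older and Lemma~\ref{lem:ab}, producing the factor $(\dashint_{2B}|\bfR|/|\bfQ|)^\beta\le\epsilon_1^\beta$ directly.

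The bad-set step, however, contains a genuine error. You assert that on $B\setminus E$ the integrand $[\phi''(|\bfQ|)+\phi''(|\bfQ|+|\bfR|)]\,|\bfR|$ is ``comparable to $|\bfV(\nabla\bfu)-\bfV(\bfQ)|^2/|\bfR|$''. The latter is $\sim\phi'_{|\bfQ|}(|\bfR|)$; the second summand does satisfy $\phi''(|\bfQ|+|\bfR|)\,|\bfR|\sim\phi'_{|\bfQ|}(|\bfR|)$, but the first summand $\phi''(|\bfQ|)\,|\bfR|$ does \emph{not}. Indeed $\phi''(|\bfQ|)\,|\bfR|\lesssim\phi'_{|\bfQ|}(|\bfR|)$ would force $\phi''(|\bfQ|)\lesssim\phi''(|\bfQ|+|\bfR|)$, which fails for subquadratic~$\phi$ when $|\bfR|\gg|\bfQ|$. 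Consequently your Chebyshev + H\"older$_{s_0}$ + Gehring route, as written, does not reach the stated bound $c\,\epsilon_1^{1-1/s_0}\phi''(|\bfQ|)\dashint_{2B}|\bfR|\,\|\nabla\bfxi\|_\infty$.

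The paper's remedy is simpler than Gehring on the bad set. There one observes that on $B\setminus E$ (where $|\bfQ|\le 2|\bfR|$) both summands satisfy
\[
\phi''(|\bfQ|)\,|\bfR|\;\sim\;\frac{\phi'(|\bfQ|)}{|\bfQ|}\,|\bfR|\;\le\;\frac{c\,\phi_{|\bfQ|}(|\bfR|)}{|\bfQ|},
\qquad
\phi''(|\bfQ|+|\bfR|)\,|\bfR|\;\le\;\frac{c\,\phi_{|\bfQ|}(|\bfR|)}{|\bfQ|},
\]
so the bad-set contribution is bounded by $\dfrac{c\,\|\nabla\bfxi\|_\infty}{|\bfQ|}\dashint_B\phi_{|\bfQ|}(|\bfR|)\,dx$. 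One application of Lemma~\ref{lem:ab} turns this into $\dfrac{c\,\|\nabla\bfxi\|_\infty}{|\bfQ|}\,\phi_{|\bfQ|}\!\big(\dashint_{2B}|\bfR|\big)\sim c\,\epsilon_1\,\phi''(|\bfQ|)\dashint_{2B}|\bfR|\,\|\nabla\bfxi\|_\infty$, which is what you need. No Gehring, no small-measure H\"older — the smallness comes entirely from $\dashint_{2B}|\bfR|/|\bfQ|\le\epsilon_1$.
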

\begin{proof}
  Let $\epsilon>0$. Without loss of generality we can assume
  that~$\delta>0$ is so small that the Lemmas~\ref{lem:avg} and
  \ref{lem:lowavg} give
  \begin{align}
    \label{eq:sm}
    \dashint_{2B} \abs{\bfV(\nabla \bfu)}^2\,dx \leq 4\,
    \abs{\bfV(\bfQ)}^2,\\
    \label{eq:almost3}    
    \dashint_{2B} \abs{\nabla \bfu - \bfQ}\,dx \leq \epsilon\,
    \abs{\bfQ}.
  \end{align}
  From the last inequality we deduce
  \begin{align}
    \label{eq:equi}
    \phi''(\abs{\bfQ})\, \bigg(\dashint_{2B} \abs{\nabla \bfu -
      \bfQ}\,dx\bigg)^2 \sim \phi_{\abs{\bfQ}}\bigg(\dashint_{2B}
    \abs{\nabla \bfu - \bfQ}\,dx\bigg).
  \end{align}
  Since the estimate~\eqref{eq:almost} is homogeneous with respect to
  $\norm{\nabla \bfxi}_\infty$, it suffices to show
  that~\eqref{eq:almost} holds for all $\bfxi \in C^\infty_0(B)$ with
  $\norm{\nabla \bfxi}_\infty= \dashint_{2B} \abs{\nabla \bfu -
    \bfQ}\,dx$. Hence, because of \eqref{eq:equi} it suffices to prove
  \begin{align}
    \label{eq:almost2}
    \biggabs{\dashint_B D^2 f(\bfQ)(\nabla \bfu - \bfQ, \nabla
      \bfxi)\,dx} \leq \epsilon\,c\, \phi_{\abs{\bfQ}}\bigg(
    \dashint_{2B} \abs{\nabla \bfu - \bfQ}\,dx \bigg)
  \end{align}
  for all such $\bfxi$. We define
  \begin{align*}
    B^\geq &:= \bigset{ x\in B\,:\, \abs{\nabla \bfu - \bfQ} \geq \tfrac{1}{2}
      \abs{\bfQ}},
    \\
    B^< &:= \bigset{ x\in B\,:\, \abs{\nabla \bfu - \bfQ}
      < \tfrac{1}{2} \abs{\bfQ}}.
  \end{align*}
  From the Euler-Lagrange equation we get $\int_B \big( Df(\nabla
  \bfv) - Df(\bfQ)\big):\nabla \bfxi\,dx = 0$, and therefore
  \begin{align*}
    \dashint_B D^2 f(\bfQ)&(\nabla \bfu - \bfQ, \nabla\bfxi)\,dx
    \\
    &= \dashint_B \int_0^1 \big(D^2 f(\bfQ) - D^2 f(\bfQ +
    \theta(\nabla \bfu - \bfQ)) \big) (\nabla \bfu - \bfQ, \nabla
    \bfxi)\, d\,\theta \,dx.
  \end{align*}
  We split the right-hand side into the integral $I$ over $B^\geq$ and
  the integral $II$ over $B^<$. Using \ref{itm:D2fbnd} we get
  \begin{align*}
    \abs{I} &\leq c\, \dashint_B \chi_{B^\geq} \int_0^1 \big(
    \phi''(\abs{\bfQ}) + \phi''(\abs{\bfQ+\theta(\nabla \bfu - \bfQ)})
    \big)\,d\theta\ \abs{\nabla \bfu - \bfQ} \abs{\nabla \bfxi}\,dx
    \\
    &\leq c\, \dashint_B \chi_{B^\geq} \big(\phi''(\abs{\bfQ}) +
    \phi''(\abs{\bfQ} + \abs{\nabla \bfu - \bfQ}) \big) \abs{\nabla
      \bfu - \bfQ} \abs{\nabla \bfxi}\,dx
    \\
    &\leq c\, \dashint_B \chi_{B^\geq} \big(\abs{\nabla \bfu - \bfQ}
    \phi'(\abs{\bfQ}) + \phi'_{\abs{\bfQ}} (\abs{\nabla \bfu - \bfQ})
    \abs{\bfQ} \big)\,dx\ \frac{\norm{\nabla \bfxi}_\infty}{\abs{\bfQ}}
    \\
    &\leq \epsilon\,c\, \dashint_B \chi_{B^\geq} \big(\abs{\nabla \bfu
      - \bfQ} \phi'(\abs{\bfQ}) + \phi'_{\abs{\bfQ}} (\abs{\nabla \bfu
      - \bfQ}) \abs{\bfQ} \big)\,dx.
  \end{align*}
  We used Lemma \ref{lem:phi_l_prop} for the second, Assumption
  \ref{ass:phi} for the third and \eqref{eq:almost3} for the last
  estimate. Now, using $\abs{\bfQ} \leq 2\, \abs{\nabla \bfu - \bfQ}$
  on $B^\geq$ and $\phi_a(t) \sim \phi(t)$ for $0 \le a \le t$ we get
  \begin{align*}
    \abs{I} &\leq \epsilon\,c\, \dashint_B \chi_{B^\geq}
    \big(\phi(\abs{\nabla \bfu - \bfQ}) + \phi_{\abs{\bfQ}}
    (\abs{\nabla \bfu - \bfQ}) \big)\,dx
    \\
    &\leq \epsilon\,c\, \dashint_B \phi_{\abs{\bfQ}} (\abs{\nabla
      \bfu - \bfQ}) \,dx.
  \end{align*}
  Let us estimate the modulus of $II$.  Using~\ref{itm:D2fdiff} and
  $\abs{\nabla \bfu - \bfQ} < \frac{1}{2} \abs{\bfQ}$ on $B^<$ we get
  \begin{align*}
    \abs{II} &\leq c\, \dashint_B \chi_{B^<} \phi''(\abs{\bfQ})
    \abs{\bfQ}^{-\beta_1} \abs{\nabla \bfu - \bfQ}^{1+\beta_1}
    \abs{\nabla \bfxi}\,dx,
  \end{align*}
  where $\beta_1 := \min \set{s_0, \beta}$ with the constant $s_0$
  from Corollary~\ref{cor:gehring3}. Using Young's inequality we get
  \begin{align*}
    \abs{II} &\leq \gamma \phi''(\abs{\bfQ}) \norm{\nabla
      \bfxi}_\infty^2 + c_\gamma \dashint_B \chi_{B^<}
    \phi''(\abs{\bfQ}) \abs{\bfQ}^{-2\beta_1} \abs{\nabla \bfu -
      \bfQ}^{2(1+\beta_1)}\,dx
    \\
    &\leq \gamma\, c\, \phi_{\abs{\bfQ}}(\norm{\nabla \bfxi}_\infty) +
    c_\gamma (\phi(\abs{\bfQ}))^{-\beta_1} \dashint_B \chi_{B^<}
    \big(\phi''(\abs{\bfQ}) \abs{\nabla \bfu - \bfQ}^2
    \big)^{1+\beta_1}\,dx
    \\
    &\leq \gamma\, c\, \dashint_{2B}\phi_{\abs{\bfQ}}(\abs{\nabla \bfu -
      \bfQ})\,dx + c_\gamma (\phi(\abs{\bfQ}))^{-\beta_1} \dashint_B
    \chi_{B^<} \big( \phi_{\abs{\bfQ}}(\abs{\nabla \bfu - \bfQ})
    \big)^{1+\beta_1}\,dx
    \\
    &\leq \gamma\,c\, \dashint_{2B} \abs{\bfV(\nabla \bfu) -
      \bfV(\bfQ)}^2\,dx + c_\gamma (\phi(\abs{\bfQ}))^{-\beta_1}
    \dashint_B \abs{\bfV(\nabla \bfu) -
      \bfV(\bfQ)}^{2(1+\beta_1)}\,dx.
  \end{align*}
  Here we used \eqref{eq:equi} for the second and Jensen's inequality,
  $\phi''(a) t^2 \sim \phi_a(t)$ for $0 \leq t \leq a$ and
  $\abs{\nabla \bfu - \bfQ} < \frac{1}{2} \abs{\bfQ}$ on $B^<$ for the
  third estimate. With the help of Corollary~\ref{cor:gehring3} we get
  \begin{align*}
    \abs{II} &\leq \gamma\,c\, \dashint_{2B} \abs{\bfV(\nabla \bfu) -
      \bfV(\bfQ)}^2\,dx + c_\gamma (\phi(\abs{\bfQ}))^{-\beta_1}
    \bigg( \dashint_{2B} \abs{\bfV(\nabla \bfu) - \bfV(\bfQ)}^2 \,dx
    \bigg)^{1+\beta_1}.
  \end{align*}
  Using the assumption~\eqref{eq:osc_small2}, Lemma~\ref{lem:manyavg}
  and \eqref{eq:sm} it follows that
  \begin{align*}
    \abs{II} &\leq \gamma\,c\, \dashint_{2B} \abs{\bfV(\nabla \bfu) -
      \bfV(\bfQ)}^2\,dx + c_\gamma  \delta^{\beta_1}
    \dashint_{2B} \abs{\bfV(\nabla \bfu) -
      \bfV(\bfQ)}^2\,dx.
  \end{align*}
Choosing $\gamma>0$ and then $\delta>0$ small enough we get the assertion.
\end{proof}

\section{Excess decay estimate}
\label{sec:comparison}

In this section we will focus on the excess decay estimate.
Therefore, we compare the almost harmonic solution with its harmonic
approximation.
\begin{proposition}
  \label{pro:tau}
  For all $\epsilon>0$, there exists
  $\delta=\delta(\phi,\epsilon)>0$ such that the following is true: if
  for some ball~$B$ with $2B \subset \subset \Omega$ the smallness
  assumption~\eqref{eq:osc_small2} holds true, then for every $\tau
  \in (0,1]$
  \begin{align}
    \label{eq:tau1}
    \Phi(\tau B, \bfu)\le c\, \tau^2 (1+ \epsilon\,
    \tau^{-n-2}\big) \, \Phi(2B, \bfu),
  \end{align}
  where $c$ depends only on the characteristics of~$\phi$ and is
  independent of $\epsilon$.
\end{proposition}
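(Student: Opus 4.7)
The plan is to implement the standard excess-decay pattern (almost $\mathcal{A}$-harmonicity $\Rightarrow$ $\mathcal{A}$-harmonic approximation $\Rightarrow$ Campanato decay of the limit) in the shifted N-function setting. Set $\bfQ:=\mean{\nabla\bfu}_{2B}$, $\psi:=\phi_{|\bfQ|}$, let $\bfq$ be the affine map with $\nabla\bfq=\bfQ$, $\tilde\bfu:=\bfu-\bfq$, and $\mathcal{A}:=D^2f(\bfQ)/\phi''(|\bfQ|)$. The Legendre--Hadamard consequence of \ref{itm:fquasi} together with \ref{itm:D2fbnd} makes $\mathcal{A}$ strongly elliptic with $\kappaA$ and $|\mathcal{A}|$ depending only on the characteristics of~$\phi$, and $\psi$ has $\Delta_2$-constants depending only on the same data. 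The range $\tau\in(1/4,1]$ of the claim is trivial by enlarging the domain of integration, so I argue only the small-$\tau$ regime.

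From \eqref{eq:osc_small2} combined with Lemmas~\ref{lem:avg}, \ref{lem:lowavg} and~\ref{lem:manyavg}, for $\delta$ small I obtain $\bfQ\neq 0$, $\dashint_{2B}|\bfV(\nabla\bfu)|^2\sim|\bfV(\bfQ)|^2$, $\dashint_{2B}|\nabla\tilde\bfu|\,dx\ll|\bfQ|$, and
\[
\Phi(2B,\bfu)\sim\dashint_{2B}|\bfV(\nabla\bfu)-\bfV(\bfQ)|^2\,dx\sim\dashint_{2B}\psi(|\nabla\tilde\bfu|)\,dx.
\]
Lemma~\ref{lem:almost} then supplies almost-$\mathcal{A}$-harmonicity of $\tilde\bfu$ in the form required by Theorem~\ref{thm:Aappr_phi}. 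I apply the latter with this $\psi$, $s=s_0$ from Corollary~\ref{cor:gehring3} and $\widetilde B=2B$; Corollary~\ref{cor:gehring3} (restated in terms of $\psi(|\nabla\tilde\bfu|)\sim|\bfV(\nabla\bfu)-\bfV(\bfQ)|^2$) absorbs the $L^{s_0}$ term of \eqref{eq:Aappr_est} into the $L^1$ one, producing $\bfw\in W^{1,\psi}_0(B)$ with $\bfh:=\tilde\bfu-\bfw$ $\mathcal{A}$-harmonic on $B$ and
\[
\dashint_B\psi(|\nabla\bfw|)\,dx\le c\,\epsilon_3\,\Phi(2B,\bfu),
\]
with $\epsilon_3$ as small as desired by further shrinking $\delta$.

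Since $\bfh$ is $\mathcal{A}$-harmonic with constant coefficients, it is smooth on $B/2$ and interior $L^\infty$--$L^1$ estimates give $\sup_{B/2}|\nabla\bfh|\le c\,\dashint_B|\nabla\bfh|\,dx$. Because both $\dashint_B|\nabla\tilde\bfu|$ and $\dashint_B|\nabla\bfw|$ are $\ll|\bfQ|$ (the latter via Jensen's inequality applied to $\psi$ and the previous display), one gets $|\nabla\bfh|\ll|\bfQ|$ pointwise on $B/2$, placing all relevant values in the quadratic regime $\psi(t)\sim\phi''(|\bfQ|)\,t^2$. A first-order Taylor estimate for the smooth $\bfh$ yields $\sup_{\tau B}|\nabla\bfh-\mean{\nabla\bfh}_{\tau B}|\le c\,\tau\,\dashint_B|\nabla\bfh|\,dx$ for $\tau\in(0,1/4]$, and combining this with Jensen for $\psi$ in the quadratic regime gives
\[
\dashint_{\tau B}\psi(|\nabla\bfh-\mean{\nabla\bfh}_{\tau B}|)\,dx\le c\,\tau^2\dashint_B\psi(|\nabla\bfh|)\,dx\le c\,\tau^2\,\Phi(2B,\bfu).
\]

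To conclude, I invoke the $L^2$-minimality of the mean against the trial constant $\mean{\bfV(\bfQ+\nabla\bfh)}_{\tau B}$ and split
\[
\Phi(\tau B,\bfu)\le c\dashint_{\tau B}|\bfV(\nabla\bfu)-\bfV(\bfQ+\nabla\bfh)|^2\,dx+c\dashint_{\tau B}|\bfV(\bfQ+\nabla\bfh)-\mean{\bfV(\bfQ+\nabla\bfh)}_{\tau B}|^2\,dx.
\]
The second term, by Lemma~\ref{lem:manyavg} and the quadratic regime, is $\sim\dashint_{\tau B}\psi(|\nabla\bfh-\mean{\nabla\bfh}_{\tau B}|)\,dx\le c\tau^2\Phi(2B,\bfu)$. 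For the first term, Lemma~\ref{lem:hammer} gives an integrand $\sim\phi_{|\nabla\bfu|}(|\nabla\bfw|)$; a routine splitting on the small Chebyshev set $\{|\nabla\tilde\bfu|\gtrsim|\bfQ|\}$ (where $|\nabla\bfw|\gtrsim|\bfQ|$ once $|\nabla\bfh|$ is made small enough on $B/2$) upgrades this to $\le c\dashint_{\tau B}\psi(|\nabla\bfw|)\,dx\le c\,\tau^{-n}\epsilon_3\Phi(2B,\bfu)$. Thus
\[
\Phi(\tau B,\bfu)\le c\,\tau^2\,\Phi(2B,\bfu)+c\,\epsilon_3\,\tau^{-n}\,\Phi(2B,\bfu)=c\,\tau^2\bigl(1+\epsilon_3\tau^{-n-2}\bigr)\Phi(2B,\bfu),
\]
which is the claim with $\epsilon:=\epsilon_3$. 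The main obstacle is this final splitting: transferring the linear Campanato $L^2$-decay of the $\mathcal{A}$-harmonic $\bfh$ into the $\psi$-decay needed for the $\bfV$-excess while properly handling the set where $|\nabla\bfw|$ is large and the quadratic regime fails. Both this splitting and the pointwise bound $|\nabla\bfh|\ll|\bfQ|$ on $B/2$ ultimately rest on the non-degeneracy secured by Lemma~\ref{lem:lowavg} and on the freedom to shrink $\delta$.
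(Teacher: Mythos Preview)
Your proof is correct and follows the same overall strategy as the paper: set $\bfQ=\mean{\nabla\bfu}_{2B}$, invoke Lemma~\ref{lem:almost} for almost $\mathcal{A}$-harmonicity with $\mathcal{A}=D^2f(\bfQ)/\phi''(|\bfQ|)$, apply Theorem~\ref{thm:Aappr_phi} with $\psi=\phi_{|\bfQ|}$, $s=s_0$, $\widetilde B=2B$, absorb the $L^{s_0}$ term via Corollary~\ref{cor:gehring3}, and use interior estimates for the $\mathcal{A}$-harmonic $\bfh$ together with the quadratic behaviour of $\phi_{|\bfQ|}$ below $|\bfQ|$ to extract the factor~$\tau^2$.

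The one genuine difference is the final splitting. The paper bounds $\Phi(\tau B,\bfu)$ directly by $\dashint_{\tau B}\phi_{|\bfQ|}(|\nabla\bfz-\mean{\nabla\bfz}_{\tau B}|)\,dx$ and then splits inside $\phi_{|\bfQ|}$ into the $\bfh$-oscillation term and the $\nabla\bfz-\nabla\bfh$ term; since everything already lives in $\phi_{|\bfQ|}$, no shift change is needed, and the error term $II$ comes out immediately as $c\,\tau^{-n}\epsilon\,\Phi(2B,\bfu)$. You instead split at the level of $\bfV$, introducing $\bfV(\bfQ+\nabla\bfh)$; this forces you to compare $\phi_{|\nabla\bfu|}(|\nabla\bfw|)$ with $\phi_{|\bfQ|}(|\nabla\bfw|)$, which is where your Chebyshev splitting and the pointwise bound $\sup_{B/2}|\nabla\bfh|\ll|\bfQ|$ enter. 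That argument is valid, but the paper's route is cleaner precisely because it never leaves the fixed shift $|\bfQ|$ and so avoids this extra case analysis entirely.
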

\begin{proof}
  It suffices to consider the case $\tau \leq \frac{1}{2}$.  Let $s_0$
  be as in Corollary~\ref{cor:gehring3}. Let $\bfq$ be a linear
  function such that $\mean{\bfu - \bfq}_{2B} = 0$ and $\bfQ := \nabla
  \bfq = \mean{\nabla \bfu}_{2B}$. Define $\bfz := \bfu - \bfq$. Let
  $\bfh$ be the harmonic approximation of $\bfz$ with $\bfh=\bfz$ on
  $\partial B$. It follows from Lemma~\ref{lem:almost} that $\bfz$ is
  almost $\mathcal{A}$-harmonic with $\calA = D^2f(\bfQ)/
  \phi''(\abs{\bfQ})$.  Thus by Theorem \ref{thm:Aappr_phi} for
  suitable $\delta=\delta(\phi,\epsilon)$ and by
  Theorem~\ref{thm:Aappr_phi} the $\calA$-harmonic approximation
  $\bfh$ satisfies
  \begin{align*}
    \dashint_B \phi_{\abs{\bfQ}}(\abs{\nabla \bfz - \nabla \bfh}) \,dx
    &\leq \epsilon \Bigg( \bigg( \dashint_{B}
    \phi_{\abs{\bfQ}}^{s_0}(\abs{\nabla \bfu -
      \bfQ})\,dx\bigg)^{\frac{1}{s_0}} + \dashint_{2B}
    \phi_{\abs{\bfQ}}(\abs{\nabla \bfu - \bfQ})\,dx \Bigg).
  \end{align*}
  Now, it follows by Corollary~\ref{cor:gehring3} that
  \begin{align}
    \label{eq:2}
    \dashint_B \phi_{\abs{\bfQ}}(\abs{\nabla \bfz - \nabla \bfh}) \,dx
    &\leq c\, \epsilon\, \Phi(2B, \bfu).
  \end{align}
  Since $\nabla \bfz = \nabla \bfu -\bfQ$ and $\mean{\nabla \bfz}_{\tau
    B} = \mean{\nabla \bfu}_{\tau B} - \bfQ$, we get
  \begin{align*}
    \Phi(\tau B, \bfu) &\leq c\, \dashint_{\tau B}
    \phi_{\abs{\bfQ}}( \abs{\nabla \bfz - \mean{\nabla
        \bfz}_{\tau B}})\,dx 
    \\
    &\leq c\,  \dashint_{\tau B} \phi_{\abs{\bfQ}}( \abs{\nabla
      \bfh - \mean{\nabla \bfh}_{\tau B}})\,dx  +
    c\,  \dashint_{\tau B} \phi_{\abs{\bfQ}}( \abs{\nabla \bfz
      - \nabla \bfh})\,dx 
    \\
    &=: I + II.
  \end{align*}
  For the second estimate we used Jensen's inequality.
  Using~\eqref{eq:2} we obtain
  \begin{align*}
    II &\leq \tau^{-n} c\,  \dashint_{B}
    \phi_{\abs{\bfQ}}( \abs{\nabla \bfz - \nabla \bfh})\,dx
    \leq \tau^{-n} c\,
    \epsilon\, \Phi(2B, \bfu).
  \end{align*}
  By the interior regularity of the $\calA$-harmonic function $\bfh$, \cite{Gia82},
  and $\tau \leq \frac 12$ it holds that
 
  \begin{align*}
    \sup_{\tau B} \abs{\nabla \bfh - \mean{\nabla \bfh}_{\tau B}} &\leq
    c\, \tau \dashint_{B} \abs{\nabla \bfh - \mean{\nabla
        \bfh}_{B}}\,dx.
  \end{align*}
  This proves
  \begin{align*}
    I &\leq c\, \phi_{\abs{\bfQ}}\bigg( \tau \dashint_{B} \abs{\nabla
      \bfh - \mean{\nabla \bfh}_{B}}\,dx \bigg).
  \end{align*}
  Using the estimate $\psi(s t) \leq s \psi(t)$ for any $s \in [0,1]$,
  $t \geq 0$ and any N-function $\psi$, we would get a factor~$\tau$
  in the estimate of $I$. However, to produce a factor~$\tau^2$, we
  have to work differently and use the improved estimate $\phi_a(s\,t)
  \leq c\, s^2 \phi_a(t)$ for all $s \in [0,1]$, $a \geq 0$ and $t \in
  [0,a]$. We begin with
  \begin{align*}
    \dashint_{B} \abs{\nabla \bfh - \mean{\nabla \bfh}_{B}}\,dx
    &\leq \dashint_{B} \abs{\nabla \bfz - \mean{\nabla
        \bfz}_{B}}\,dx + 2\,\dashint_{B} \abs{\nabla \bfz - \nabla
      \bfh}\,dx
    \\
    &= \dashint_{B} \abs{\nabla \bfu - \mean{\nabla
        \bfu}_{B}}\,dx + 2\,\dashint_{B} \abs{\nabla \bfz - \nabla
      \bfh}\,dx,
  \end{align*}
  which implies
  \begin{align*}
    I &\leq c\, \phi_{\abs{\bfQ}}\bigg( \tau \dashint_{B} \abs{\nabla
      \bfu - \mean{\nabla \bfu}_{B}}\,dx \bigg) + c\, \tau
    \phi_{\abs{\bfQ}}\bigg( \dashint_{B} \abs{\nabla \bfz -
      \nabla \bfh}\,dx \bigg).
  \end{align*}
  Due to~\eqref{eq:almost3}, we can use for the first term the
  improved estimate $\phi_a(s\,t) \leq c\, s^2 \phi_a(t)$, which gives
  \begin{align*}
    I &\leq c\, \tau^2\, \phi_{\abs{\bfQ}}\bigg( \dashint_{B}
    \abs{\nabla \bfu - \mean{\nabla \bfu}_{B}}\,dx \bigg) + c\,
    \tau\, \phi_{\abs{\bfQ}}\bigg( \dashint_{B} \abs{\nabla \bfz -
      \nabla \bfh}\,dx \bigg)
    \\
     &\leq c\, \tau^2\, \dashint_{B} \phi_{\abs{\bfQ}}\big(
    \abs{\nabla \bfu - \mean{\nabla \bfu}_{B}} \big) \,dx + c\,
    \tau\, \dashint_{B} \phi_{\abs{\bfQ}} \big(\abs{\nabla \bfz -
      \nabla \bfh} \big)\,dx.
  \end{align*}
  Thus using~\eqref{eq:2} we get
  \begin{align*}
    I &\leq c\, \tau^2\, \Phi(B,\bfu)+ c\, \tau\, \epsilon\,
    \Phi(2B, \bfu) \leq c\, \big(\tau^2\, + \epsilon\,
    \tau\big)\, \Phi(2B,\bfu).
  \end{align*}
  Combining the estimates for $I$ and
  $II$ we get the claim.
\end{proof}
It follows now,  by a series of standard arguments,  that for any $\beta
\in(0,1)$, there exists a suitable small  $\delta$ that
ensures local $C^{0,\beta}$-regularity of $\bfV(\nabla u)$, which
implies H{\"o}lder continuity of the gradients as well.
\begin{proposition}[Decay estimate]
  \label{pro:decay}
  For $ 0 <\beta < 1$ there exists $\delta = \delta(\phi, \beta) > 0$
  such that the following is true. If for some ball~$B \subset \Omega$
  the smallness assumption~\eqref{eq:osc_small2} holds true, then
 \begin{align}
   \label{eq:excessdecay1}
   \Phi(\rho B, \bfu)\le c\, \rho^{2\beta} \Phi(2B, \bfu)
 \end{align}
 for any $\rho \in (0,1]$, where $c=c(\phi)$ depends only on the
 characteristics of~$\phi$.
\end{proposition}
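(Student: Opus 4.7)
My plan is a standard iteration of Proposition~\ref{pro:tau} together with a covering argument, with care that the smallness hypothesis~\eqref{eq:osc_small2} is preserved along the iteration.

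First, I would fix the parameters once and for all. Since $\beta<1$ and $\tau\in(0,1]$, one has $\tau^2=\tau^{2\beta}\cdot\tau^{2(1-\beta)}$, so I choose $\tau=\tau(\phi,\beta)\in(0,\tfrac12]$ so small that $c\,\tau^{2(1-\beta)}\le\tfrac14$, where $c$ is the constant of Proposition~\ref{pro:tau}. Next I choose $\epsilon=\epsilon(\phi,\beta)>0$ so small that $c\,\epsilon\,\tau^{-n-2}\le\tfrac14\tau^{2\beta}$, and then let $\delta_0=\delta(\phi,\epsilon)$ be furnished by Proposition~\ref{pro:tau}. With these choices, whenever the smallness assumption~\eqref{eq:osc_small2} holds on a ball $2B'$, Proposition~\ref{pro:tau} yields
\begin{align*}
\Phi(\tau B',\bfu)\le \tfrac12\,\tau^{2\beta}\,\Phi(2B',\bfu).
\end{align*}

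The main step is the iteration. Set $r$ so that $2B=B(x_0,2r)$, and let $B_k:=B(x_0,2\tau^k r)$, so that $B_0=2B$ and $B_{k+1}=\tau\cdot\tfrac{1}{2}B_k$. I claim that for a possibly smaller $\delta=\delta(\phi,\beta)\le\delta_0$, the smallness assumption transfers from $B_k$ to $B_{k+1}$, and consequently
\begin{align*}
\Phi(B_{k+1},\bfu)\le \tau^{2\beta}\,\Phi(B_k,\bfu), \qquad k\ge 0.
\end{align*}
Assuming this, iterating gives $\Phi(B_k,\bfu)\le \tau^{2\beta k}\Phi(2B,\bfu)$ for all $k\ge 0$. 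For an arbitrary $\rho\in(0,1]$, pick $k$ with $\tau^{k+1}<\rho\le\tau^k$; since $\Phi$ is defined as an average, enlarging the ball by a bounded factor increases $\Phi$ by at most $\tau^{-n}$, whence
\begin{align*}
\Phi(\rho B,\bfu)\le \tau^{-n}\,\Phi(B_k,\bfu)\le \tau^{-n-2\beta}\,\rho^{2\beta}\,\Phi(2B,\bfu),
\end{align*}
which is the claimed estimate with $c=c(\phi,\beta)$.

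The heart of the matter, and the step I expect to be the main obstacle, is the propagation of~\eqref{eq:osc_small2}. Assuming it holds on $B_k$, Lemma~\ref{lem:avg} gives $\dashint_{B_k}|\bfV(\nabla\bfu)|^2\,dx\le 4|\bfV(\mean{\nabla\bfu}_{B_k})|^2$, while Lemma~\ref{lem:lowavg} and the triangle inequality imply that the averages $\mean{\nabla\bfu}_{B_{k+1}}$ and $\mean{\nabla\bfu}_{B_k}$ differ by at most $\epsilon|\mean{\nabla\bfu}_{B_k}|$, so $|\bfV(\mean{\nabla\bfu}_{B_{k+1}})|^2\ge\tfrac12|\bfV(\mean{\nabla\bfu}_{B_k})|^2$ provided $\epsilon$ is small. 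On the other hand $\dashint_{B_{k+1}}|\bfV(\nabla\bfu)|^2\,dx\ge |\mean{\bfV(\nabla\bfu)}_{B_{k+1}}|^2$, and Lemma~\ref{lem:manyavg} together with the already established decay bounds $|\mean{\bfV(\nabla\bfu)}_{B_{k+1}}-\bfV(\mean{\nabla\bfu}_{B_{k+1}})|^2$ by a multiple of $\Phi(B_{k+1},\bfu)$, which is itself controlled by $\tau^{2\beta}\delta\dashint_{B_k}|\bfV(\nabla\bfu)|^2\,dx$. Combining these comparisons and choosing $\delta$ so small that $\tau^{2\beta}\delta$ times the resulting geometric constant is $\le\delta$ closes the induction; a telescoping series of the $\epsilon|\mean{\nabla\bfu}_{B_k}|$ perturbations across all scales remains summable for $\epsilon$ small, so the non-degeneracy $\mean{\nabla\bfu}_{B_k}\not=0$ survives indefinitely. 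Once the induction is in place, the excess decay~\eqref{eq:excessdecay1} follows by the geometric/covering argument above.
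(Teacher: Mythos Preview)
Your proposal is correct and follows the same scheme as the paper: fix $\tau$ and $\epsilon$ so that Proposition~\ref{pro:tau} yields a contraction $\Phi(\tau B',\bfu)\le \gamma\,\Phi(2B',\bfu)$ with $\gamma\le(\tau/2)^{2\beta}$, show that the smallness assumption~\eqref{eq:osc_small2} is inherited by the smaller ball, and iterate.

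The one genuine difference is in the propagation step. You route the argument through the nonlinear quantities $\bfV(\mean{\nabla\bfu}_{B_k})$, invoking Lemmas~\ref{lem:avg}, \ref{lem:lowavg} and~\ref{lem:manyavg} to compare $\mean{\nabla\bfu}_{B_{k+1}}$ with $\mean{\nabla\bfu}_{B_k}$ and then pass back to $\dashint_{B_{k+1}}\abs{\bfV(\nabla\bfu)}^2$. The paper instead stays at the level of $\bfV(\nabla\bfu)$ throughout: by the triangle inequality in $L^2$,
\[
\Big(\dashint_{2B}\abs{\bfV(\nabla\bfu)}^2\Big)^{1/2}
\le \Phi(2B,\bfu)^{1/2}+\bigabs{\mean{\bfV(\nabla\bfu)}_{2B}-\mean{\bfV(\nabla\bfu)}_{\tau B}}
+\Big(\dashint_{\tau B}\abs{\bfV(\nabla\bfu)}^2\Big)^{1/2},
\]
then bounds the middle term by $\tau^{-n/2}\Phi(2B,\bfu)^{1/2}$ and absorbs using $(1+\tau^{-n/2})\delta^{1/2}\le\tfrac12$. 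Combined with $\Phi(\tau B,\bfu)\le\tfrac14\Phi(2B,\bfu)$ this gives $\Phi(\tau B,\bfu)\le\delta\dashint_{\tau B}\abs{\bfV(\nabla\bfu)}^2$ directly. This is shorter and avoids the detour through $\mean{\nabla\bfu}$; your route works too, but note two small slips: the comparison of $\mean{\nabla\bfu}_{B_{k+1}}$ and $\mean{\nabla\bfu}_{B_k}$ via Lemma~\ref{lem:lowavg} picks up a harmless volume factor $(2/\tau)^n$, and the condition you phrase as ``choosing $\delta$ so small that $\tau^{2\beta}\delta\cdot C\le\delta$'' is really a constraint on $\tau$ (namely $C\,\tau^{2\beta}\le 1$), not on $\delta$---this just means $\tau$ must be fixed small enough at the outset, which is compatible with your other constraint $c\,\tau^{2(1-\beta)}\le\tfrac14$.
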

\begin{proof}
  Due to our assumption, we can apply Proposition~\ref{pro:tau} for
  any~$\tau$. Let $\gamma(\epsilon,\tau):=c\, \tau^2\,(1+\epsilon\,
  \tau^{-n-2})$ as in~\eqref{eq:tau1}. Let us fix
  $\tau>0$ and $\epsilon>0$, such that $\gamma(\epsilon,\tau) \leq
  \min \set{(\tau/2)^{2\beta}, \frac 14}$. Let
  $\delta=\delta(\phi,\epsilon)$ chosen accordingly to
  Proposition~\ref{pro:tau} and also
  so small that $(1+\tau^{-n/2})\delta^{1/2} \leq
  \frac{1}{2}$. By Proposition~\ref{pro:tau} we have
  \begin{align}
    \label{eq:tau2decay}
    \Phi(\tau B, \bfu) &\leq \min
    \set{(\tau/2)^{2\beta}, \tfrac 14}\, \Phi(2B, \bfu).
  \end{align}
  We claim that the smallness assumption is inherited from~$2B$ to
  $\tau B$, so that we can iterate~\eqref{eq:tau2decay}. For this we
  estimate with the help of our smallness assumption
  \begin{align*}
    \bigg(\dashint_{2B} \abs{\bfV(\nabla \bfu)}^2\,dx \bigg)^{\frac
      12} &\leq \big(\Phi(2B, \bfu)\big)^{\frac 12} +
    \abs{\mean{\bfV(\nabla \bfv)}_{2B} - \mean{\bfV(\nabla
        \bfv)}_{\tau B}} + \bigg(\dashint_{\tau B}
    \abs{\bfV(\nabla \bfv)}^2\,dx \bigg)^{\frac 12}
    \\
    &\leq \big(\Phi(2B, \bfu)\big)^{\frac 12} + \tau^{-n/2}
    \big(\Phi(2B, \bfu)\big)^{\frac 12} + \bigg(\dashint_{\tau B}
    \abs{\bfV(\nabla \bfv)}^2\,dx \bigg)^{\frac 12}
    \\
    &\leq \big(1 + \tau^{-n/2}\big) \delta^{1/2}
    \bigg(\dashint_{2B} \abs{\bfV(\nabla \bfu)}^2\,dx\bigg)^{\frac 12}
    + \bigg(\dashint_{\tau B} \abs{\bfV(\nabla \bfv)}^2\,dx
    \bigg)^{\frac 12}.
  \end{align*}
  Using $(1+\tau^{-n/2})\delta^{1/2} \leq \frac{1}{2}$, we get
  \begin{align*}
    \dashint_{2B} \abs{\bfV(\nabla \bfu)}^2\,dx &\leq 4\,
    \dashint_{\tau B} \abs{\bfV(\nabla \bfv)}^2\,dx.
  \end{align*}
  Now~\eqref{eq:tau2decay} and the previous estimate imply
  \begin{align*}
    \Phi(\tau B, \bfu)  &\leq \frac 14
    \Phi(2B, \bfu) \leq \frac 14 \delta \dashint_{2B}
    \abs{\bfV(\nabla \bfu)}^2\,dx \leq \delta \dashint_{\tau B}
    \abs{\bfV(\nabla \bfu)}^2\,dx.
  \end{align*}
  In particular, the smallness assumption is also satisfied for $\tau
  B$. So by induction we get
  \begin{align}
    \Phi((\tau/2)^k 2B, \bfu) &\leq \min \set{(\tau/2)^{2\beta
        k}, 4^{-k}}\, \Phi(2B,\bfu),
  \end{align}
  which is the desired claim.
\end{proof}

Having the decay estimate, it is easy to proove our Main Theorem.
\begin{proof}[Proof of the Main Theorem~\ref{thm:main}] 
  We can assume that \eqref{eq:osc_small2} is satisfied with a strict inequality. By
  continuity, \eqref{eq:osc_small2},  holds for $B=B(x)$ and all
  $x$ in some neighborhood of $x_0$. By Proposition \ref{pro:decay} and
  Campanato's characterisation of H{\"o}lder continuity we
  deduce that $\bfV(\nabla \bfu)$ is $\beta$-H{\"o}lder continuous in a
  neighbourhood of $x_0$. 
\end{proof}

\bibliographystyle{plain}
\bibliography{lars}

\end{document}